\numberwithin{equation}{section}
\numberwithin{figure}{section}
\theoremstyle{plain}
\newtheorem{thm}{\protect\theoremname}
  \theoremstyle{plain}
  \newtheorem{lem}[thm]{\protect\lemmaname}
  \theoremstyle{plain}
  \newtheorem{cor}[thm]{\protect\corollaryname}
  \theoremstyle{plain}
  \newtheorem{prop}[thm]{\protect\propositionname}
  \theoremstyle{remark}
  \newtheorem{rem}[thm]{\protect\remarkname}
  \theoremstyle{plain}
  \newtheorem*{thm*}{\protect\theoremname}
  \theoremstyle{definition}
  \newtheorem{problem}[thm]{\protect\problemname}
  \theoremstyle{definition}
  \newtheorem*{problem*}{\protect\problemname}
  \providecommand{\corollaryname}{Corollary}
  \providecommand{\lemmaname}{Lemma}
  \providecommand{\problemname}{Problem}
  \providecommand{\propositionname}{Proposition}
  \providecommand{\remarkname}{Remark}
  \providecommand{\theoremname}{Theorem}
\providecommand{\theoremname}{Theorem}
\begin{document}

\title{Proving Ergodicity via divergence of ergodic sums }

\author{Zemer Kosloff}

\address{Einstein Institute of Mathematics, Edmond J. Safra Campus (Givat
Ram), The Hebrew University, Jerusalem 91904, Israel.}

\email{zemer.kosloff@mail.huji.ac.il}

\thanks{This research was partially supported by ISF grant No. 1570/17. We
would like to thank Raimundo Briceno for his valuable comments on
the first draft of the paper which greatly improved the presentation
and Emmanuel Roy who stimulated our interest in Poisson suspenisons. }
\begin{abstract}
A classical fact in ergodic theory is that ergodicity is equivalent
to almost everywhere divergence of ergodic sums of all nonnegative
integrable functions which are not identically zero. We show two methods,
one in the measure preserving case and one in the nonsingular case,
which enable one to prove this criteria by checking it on a dense
collection of functions and then extending it to all nonnegative functions.
The first method, Theorem \ref{thm: simple ergodicity criteria},
is then used in a new proof of a folklore criterion for ergodicity
of Poisson suspensions which does not make any reference to Fock spaces.
The second method, Theorem \ref{thm: nonsingular erg}, which involves
the double tail relation is used to show that a large class of nonsingular
Bernoulli and inhomogeneous Markov shifts are ergodic if and only
if they are conservative. In the last section we discuss an extension
of the Bernoulli shift result to other countable groups including
$\mathbb{Z}^{d},\ d\geq2$ and discrete Heisenberg groups. 
\end{abstract}

\maketitle

\section{Introduction}

Given a non-singular system $\left(X,\mathcal{B},\mu,T\right)$, one
of the major challenges is to prove ergodicity. In the finite measure
preserving case, a common approach is to establish that for every
$f\in L^{1}\left(X,\mu\right)$, for $\mu-$a.e. $x\in X$,
\[
\frac{1}{n}\sum_{k=0}^{n-1}f\circ T^{k}(x)=\frac{1}{n}S_{n}(f)(x)\xrightarrow[n\to\infty]{}\int_{X}fd\mu.
\]
It then follows from the pointwise ergodic theorem that $T$ is ergodic.
The maximal inequality, which states that there exists $C>0$, such
that for all $f\in L^{1}\left(X,\mu\right)$ and $t>0$,
\[
\mu\left(\sup_{n\in\mathbb{N}}\left|\frac{S_{n}(f)}{n}\right|>t\right)\leq C\frac{|f|_{L^{1}}}{t}
\]
is used in the classical proof of the pointwise ergodic theorem in
order to establish the almost everywhere convergence for all $f\in L^{1}\left(X,\mu\right)$
from the knowledge of a.e. convergence for a dense set of integrable
functions. This principle lies in the heart of the Hopf method, which
is a method of proving ergodicity for many smooth systems by showing
that for s dense collection of continuous functions $f$, 
\[
\lim_{n\to\infty}\frac{S_{n}(f)}{n}=\int fd\mu,\ \ \mu-a.e.,
\]
See \cite{Wil12} and references therein for a description of the
Hopf method and some references of where it has been used for proving
ergodicity. In the case of infinite $\sigma$-finite measure preserving
systems, one can replace the pointwise ergodic theorem with Hopf's
ratio ergodic theorem by fixing a well chosen positive integrable
function $g\in L^{1}\left(X,\mu\right)$ and then showing that for
all $f\in L^{1}\left(X,\mu\right)$, 
\[
\frac{S_{n}(f)}{S_{n}(g)}\xrightarrow[n\to\infty]{}\frac{\int_{X}fd\mu}{\int_{X}gd\mu},\ \ \mu-a.e.
\]
Again by a maximal inequality it is enough to establish this convergence
for a dense class of $f$ in $L^{1}\left(X,\mu\right)$ and this is
the starting point in Coudene's extension of the Hopf method for some
infinite measure preserving systems \cite{Cou07}. See also \cite{Rob03,Sch16}
for other cases where ergodicity is proved via the ratio ergodic theorem.
A similar method can be done in the case of non-singular systems by
replacing the ratio ergodic theorem with Hurewicz ergodic theorem.
Indeed, this is used in \cite{Kren70Gen,SilThieu95skewent} to show
that a non-singular $K$-system is ergodic if and only if it is conservative. 

Another criteria for ergodicity is that for every $0\leq f\in L^{1}\left(X,\mu\right)$
with $\int_{X}fd\mu>0$, \footnote{It is enough to consider $\left\{ f=1_{A}:\ A\in\mathcal{B},\ 0<\mu(A)<\infty\right\} .$ }
\[
S_{n}(f)\xrightarrow[n\to\infty]{}\infty,\ \ \mu-a.e.
\]

In this note we first make use of this well known ergodicity criteria
for proving ergodicity in two cases, namely Poisson suspensions and
(in-homogenous) Markov shifts. Given a standard probability space
$\left(X,\mathcal{B},\mu\right)$, we say that a collection of sets
$\mathcal{A}\subset\mathcal{B}$ is dense in $\mathcal{B}$ if for
every $B\in\mathcal{B}$ and $\epsilon>0$ there exists $A\in\mathcal{A}$
with $\mu\left(A\right)>0$ and 
\[
\mu\left(A\cap B\right)\geq(1-\epsilon)\mu(A).
\]
 
\begin{thm}
\label{thm: simple ergodicity criteria}Let $\left(X,\mathcal{B},\mu\right)$
be a standard probability space, $T:X\to X$ a measure preserving
system, and $\mathcal{A}\subset\mathcal{B}$ a collection of sets
which is dense in $\mathcal{B}$. If there exists $\alpha>0$ such
that for all $A\in\mathcal{A}$ there exists two subsequences $n_{j}\to\infty$
and $N_{n}\to\infty$ such that 
\[
\liminf_{n\to\infty}\left(\frac{1}{N_{n}}\sum_{k=0}^{N_{n}-1}1_{A}\circ T^{n_{k}}\right)\geq\alpha\mu(A),\ \ \mu-a.e.,
\]
then $T$ is ergodic. 
\end{thm}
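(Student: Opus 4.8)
The plan is to argue by contradiction. Suppose $T$ is not ergodic; then there is a set $E\in\mathcal{B}$ with $0<\mu(E)<1$ and $T^{-1}E=E$, and hence $T^{-n}E=E$ for every $n\geq0$ (if one uses the ``mod $\mu$'' notion of invariance, this is achieved after the usual replacement of $E$ by a genuinely invariant set of the same measure). Put $F:=X\setminus E$, so that $F$ is also invariant and $\mu(F)>0$. The heuristic is that the hypothesis forces the averages $\frac{1}{N_n}\sum_{k=0}^{N_n-1}1_A\circ T^{n_k}$ to stay $\geq\alpha\mu(A)$ \emph{everywhere}, in particular on $F$; but if $A$ is chosen to lie almost entirely inside $E$, then --- because $E$ and $F$ are invariant and $\mu$ is $T$-invariant --- those averages contribute almost nothing to $F$. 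Matching the two estimates for a fine enough approximation produces the contradiction.

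Carrying this out, fix $\epsilon:=\frac{1}{2}\alpha\mu(F)>0$, and use the density of $\mathcal{A}$ in $\mathcal{B}$ to pick $A\in\mathcal{A}$ with $\mu(A)>0$ and $\mu(A\cap E)\geq(1-\epsilon)\mu(A)$, equivalently $\mu(A\cap F)\leq\epsilon\mu(A)$. Applying the hypothesis to this $A$ furnishes subsequences $n_k\to\infty$ and $N_n\to\infty$ with
\[
g:=\liminf_{n\to\infty}\frac{1}{N_n}\sum_{k=0}^{N_n-1}1_A\circ T^{n_k}\geq\alpha\mu(A)\qquad\mu\text{-a.e.}
\]
Integrating this inequality over $F$ gives the lower bound $\int_F g\,d\mu\geq\alpha\mu(A)\mu(F)$.

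For a matching upper bound, apply Fatou's lemma to the nonnegative functions $\left(\frac{1}{N_n}\sum_{k=0}^{N_n-1}1_A\circ T^{n_k}\right)1_F$, obtaining
\[
\int_F g\,d\mu\leq\liminf_{n\to\infty}\frac{1}{N_n}\sum_{k=0}^{N_n-1}\int_F 1_A\circ T^{n_k}\,d\mu=\liminf_{n\to\infty}\frac{1}{N_n}\sum_{k=0}^{N_n-1}\mu\bigl(F\cap T^{-n_k}A\bigr).
\]
The one computation that matters is that, since $F$ is invariant and $T$ preserves $\mu$, one has $\mu\bigl(F\cap T^{-n_k}A\bigr)=\mu\bigl(T^{-n_k}(F\cap A)\bigr)=\mu(F\cap A)\leq\epsilon\mu(A)$ for every $k$, so the right-hand side is at most $\epsilon\mu(A)$. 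Combining the two bounds gives $\alpha\mu(A)\mu(F)\leq\epsilon\mu(A)=\frac{1}{2}\alpha\mu(A)\mu(F)$, which is impossible because $\alpha\mu(A)\mu(F)>0$. Hence no such $E$ exists and $T$ is ergodic.

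I do not expect a real obstacle here; the only points requiring care are organizational: the reduction to a strictly invariant $E$, applying Fatou in the correct direction (liminf of nonnegative functions), and the invariance identity $\mu(F\cap T^{-n_k}A)=\mu(F\cap A)$. It is worth noting that this realizes precisely the ``check on a dense collection, then extend'' scheme of the introduction: the density of $\mathcal{A}$ is used only to approximate the putative invariant set $E$, so the divergence hypothesis is only ever needed for sets belonging to $\mathcal{A}$.
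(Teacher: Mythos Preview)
Your proof is correct and follows essentially the same route as the paper's: contrapositive via a nontrivial invariant set, density to approximate it by some $A\in\mathcal{A}$, then Fatou over the complementary invariant piece combined with the invariance identity to force a contradiction. The only cosmetic differences are that you phrase the key step as $\mu(F\cap T^{-n_k}A)=\mu(F\cap A)$ while the paper writes it as $1_A\circ T^{n_k}=1_{A\setminus E}\circ T^{n_k}$ on $F$, and your choice $\epsilon=\tfrac{1}{2}\alpha\mu(F)$ yields the strict inequality more cleanly than the paper's $\epsilon=\alpha\mu(X\setminus B)$.
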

We use this criteria to show a new proof of the classical fact that
a measure preserving Poisson suspension $\left(X^{*},\mathcal{B}^{*},\mu^{*},S_{*}\right)$
is ergodic if and only if there exists no absolutely continuous invariant
probability measure for $\left(X,\mathcal{B},\mu,S\right)$. Our proof
does not involve any use of the Fock Space structure and therefore
it might be useful for proving ergodicity of other more complicated
point processes. We also show that a simple argument shows that $\left(X^{*},\mathcal{B}^{*},\mu^{*},S_{*}\right)$
is indeed weak mixing if and only if it is ergodic, thus this direct
method recovers the full statement of the classical fact. 

In the case where $T$ is invertible and nonsingular, $T$ is ergodic
if and only if for all $0\leq f\in L^{1}\left(X,\mu\right)$ with
$\int_{X}fd\mu>0$, 
\[
\hat{T}_{n}(f)=\sum_{k=0}^{n-1}\frac{d\left(\mu\circ T^{-k}\right)}{d\mu}f\circ T^{-k}\xrightarrow[n\to\infty]{}\infty,\ \ \mu-a.e.
\]

Given a finite or countable set $F$, a closed shift invariant subset
$X\subset F^{\mathbb{Z}}$ is called a subshift. The double tail relation
of $X$ is a Borel subset of $X\times X$ defined by 
\[
\mathcal{T}=\left\{ (x,y)\in X\times X:\ \exists n\in\mathbb{N},\ \forall|k|>n,\ x_{k}=y_{k}\right\} .
\]

The set $\mathcal{T}$ is in addition an equivalence relation on $X$
. Given a probability measure $\mu$ on $X$ for which the shift $T$
is non-singular, the symbolic system $\left(X,\mathcal{B},\mu,T\right)$
is double tail trivial if for all $A\in\mathcal{B}$, $\mu\left(\mathcal{T}\left(A\right)\right)=0$
or $\mu\left(X\mathcal{\backslash T}\left(A\right)\right)=0$, where
\[
\mathcal{T}\left(A\right)=\cup_{x\in A}[x]_{\sim}=\left\{ y\in X:\ \exists x\in A,\ (x,y)\in\mathcal{T}\right\} .
\]
See \cite{PetSch97} and the references therein for examples of double
tail trivial processes. 
\begin{thm}
\label{thm: nonsingular erg}Let $\left(X,\mathcal{B},\mu,T\right)$
be a conservative, non-singular subshift which is double tail trivial
and $\mathcal{A}\subset\mathcal{B}$ the collection of finite union
of cylinder sets in $\mathcal{B}$. If there exists $L:\mathcal{T}\to\left(0,\infty\right)$
such that for $\mu\times\mu$ almost all $(x,y)\in\mathcal{T}$, for
all $n\in\mathbb{N}$, 
\[
L(x,y)^{-1}\frac{d\left(\mu\circ T^{-n}\right)}{d\mu}(y)\leq\frac{d\left(\mu\circ T^{-n}\right)}{d\mu}(x)\leq L(x,y)\frac{d\left(\mu\circ T^{-n}\right)}{d\mu}(y),
\]
then $T$ is ergodic. 
\end{thm}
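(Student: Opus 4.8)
The plan is to prove that every $T$-invariant set coincides, modulo a $\mu$-null set, with a $\mathcal T$-saturated set; double tail triviality then forces such a set to have measure $0$ or $1$, so the $\sigma$-algebra $\mathcal I$ of $T$-invariant sets is $\mu$-trivial and $T$ is ergodic. (Equivalently one could verify the divergence criterion recalled above on indicators of invariant sets, but it is cleaner to work with $\mathcal I$ directly.) The engine is a comparison, along double-tail-equivalent points, of the Hurewicz averages $\hat T_n(1_A)/\hat T_n(\mathbf 1)$, $\mathbf 1$ being the constant function. Since $T$ is conservative, $\sum_{k\ge 0}\frac{d(\mu\circ T^{-k})}{d\mu}\in\{0,\infty\}$ $\mu$-a.e., and as the $k=0$ term equals $1$ one gets $\hat T_n(\mathbf 1)=\sum_{k=0}^{n-1}\frac{d(\mu\circ T^{-k})}{d\mu}\uparrow\infty$ $\mu$-a.e.; by the Hurewicz ergodic theorem, $\hat T_n(1_A)/\hat T_n(\mathbf 1)\to\mathbb E_\mu(1_A\mid\mathcal I)$ $\mu$-a.e.\ for every $A\in\mathcal B$. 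Throughout I treat $\mathcal T$ as a $\mu$-nonsingular countable Borel equivalence relation, as in the standing setup.

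Fix $A\in\mathcal A$ depending only on the coordinates in $\{-m,\dots,m\}$ and a pair $(x,y)\in\mathcal T$ with $x_j=y_j$ for $|j|>n_0$, at which the hypothesised two-sided bound holds for all $k\in\mathbb N$. For $k>n_0+m$ the points $T^{-k}x$ and $T^{-k}y$ agree on $\{-m,\dots,m\}$, so $1_A(T^{-k}x)=1_A(T^{-k}y)$. Splitting off the first $n_0+m+1$ summands of $\hat T_n(1_A)(x)$ and comparing the rest termwise to those of $\hat T_n(1_A)(y)$ via the hypothesised bound yields a finite constant $c=c(x,y)$, independent of $n$, with
\[
\hat T_n(1_A)(x)\ \le\ c+L(x,y)\,\hat T_n(1_A)(y),\qquad \hat T_n(\mathbf 1)(y)\ \le\ L(x,y)\,\hat T_n(\mathbf 1)(x).
\]
Dividing the first inequality by $\hat T_n(\mathbf 1)(x)$, using the second, and letting $n\to\infty$ (so $c/\hat T_n(\mathbf 1)(y)\to 0$ by conservativity) gives $\mathbb E_\mu(1_A\mid\mathcal I)(x)\le L(x,y)^2\,\mathbb E_\mu(1_A\mid\mathcal I)(y)$, and symmetrically with $x,y$ interchanged. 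Hence, for $\mu\times\mu$-a.e.\ $(x,y)\in\mathcal T$ and every $A\in\mathcal A$,
\[
L(x,y)^{-2}\,\mathbb E_\mu(1_A\mid\mathcal I)(y)\ \le\ \mathbb E_\mu(1_A\mid\mathcal I)(x)\ \le\ L(x,y)^2\,\mathbb E_\mu(1_A\mid\mathcal I)(y).
\]

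Next I pass from $\mathcal A$ to all of $\mathcal B$: as $\mathcal A$ is countable, the last display holds for $\mu\times\mu$-a.e.\ $(x,y)\in\mathcal T$ simultaneously for all $A\in\mathcal A$. Given $B\in\mathcal B$, choose $A_j\in\mathcal A$ with $\mu(A_j\triangle B)\to 0$; then $\mathbb E_\mu(1_{A_j}\mid\mathcal I)\to\mathbb E_\mu(1_B\mid\mathcal I)$ in $L^1(\mu)$, hence $\mu$-a.e.\ along a subsequence, and since $\mathcal T$ is $\mu$-nonsingular this convergence holds at both coordinates of $\mu\times\mu$-a.e.\ pair of $\mathcal T$. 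Passing to the limit, the two-sided inequality holds with $1_B$ in place of $1_A$ for every $B\in\mathcal B$. Applying it to a $T$-invariant set $D$, where $\mathbb E_\mu(1_D\mid\mathcal I)=1_D$ $\mu$-a.e., yields $1_D(x)\le L(x,y)^2\,1_D(y)$ for $\mu\times\mu$-a.e.\ $(x,y)\in\mathcal T$; since $0<L(x,y)<\infty$ this forces $1_D(x)=1_D(y)$ for $\mu\times\mu$-a.e.\ $(x,y)\in\mathcal T$, so $D$ coincides modulo a $\mu$-null set with a $\mathcal T$-saturated set, and double tail triviality gives $\mu(D)\in\{0,1\}$.

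The step I expect to be the main obstacle is this passage from cylinders to arbitrary measurable sets. The estimate in the second paragraph is genuinely non-uniform — the cut-off $n_0$ and the constant $c$ depend on the pair and on the cylinder level $m$ — and survives $n\to\infty$ only because conservativity makes $\hat T_n(\mathbf 1)\to\infty$; transporting the resulting $\mu$-a.e.\ comparison onto the $\mu\times\mu$-null set $\mathcal T$ requires care with the counting measure on $\mathcal T$ and with the non-singularity of $\mathcal T$ relative to $\mu$. A secondary point to pin down is the exact form of the Hurewicz ergodic theorem identifying $\lim_n\hat T_n(1_A)/\hat T_n(\mathbf 1)$ with the $\mu$-conditional expectation onto $\mathcal I$.
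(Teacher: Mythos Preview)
Your argument is correct and in fact cleaner than the paper's. Both proofs rest on the same two pillars---the Hurewicz ratio ergodic theorem and the termwise comparison of $\hat T_n(1_A)$ along tail-equivalent pairs for cylinder sets $A$---but they diverge in how they extend from $\mathcal A$ to all of $\mathcal B$. The paper argues by contradiction via the divergence criterion: assuming $\sum_n \hat T^n(1_B)<\infty$ on a set $D$ of positive measure, it approximates $B$ by cylinders $A_k$ with $\mu(A_k\triangle B)\le k^{-2}$, invokes the Hurewicz \emph{maximal inequality} together with Borel--Cantelli to control $\sup_n|\hat T_n(1_B-1_{A_k})|/\hat T_n(\mathbf 1)$, transports the resulting lower bound on $\hat T_n(1_{A_k})/\hat T_n(\mathbf 1)$ across $\mathcal T$ using the comparison estimate, and reaches a contradiction. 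You instead pass to the limit first, obtaining $\mathbb E_\mu(1_A\mid\mathcal I)(x)\le L(x,y)^2\,\mathbb E_\mu(1_A\mid\mathcal I)(y)$ for all $A\in\mathcal A$ simultaneously, and then extend to general $B$ using only the $L^1$-contractivity of conditional expectation and almost-everywhere convergence along a subsequence; no maximal inequality is needed. This yields the structural statement $\mathcal I\subset\sigma(\mathcal T)$ mod $\mu$, from which ergodicity is immediate. The trade-off is that the paper's route, by keeping the maximal inequality in play, dovetails with the group-action generalisation in the final section (where the ``Hurewicz group'' hypothesis packages exactly the RET plus maximal inequality), whereas your route shows that for the $\mathbb Z$-case the maximal inequality is not actually needed. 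One cosmetic point: when you divide by $\hat T_n(\mathbf 1)(x)$ the vanishing term is $c/\hat T_n(\mathbf 1)(x)$, not $c/\hat T_n(\mathbf 1)(y)$, though of course both tend to zero by conservativity.
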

In Section \ref{sec:Examples} we use this theorem to show an ergodicity
criterion for two natural symbolic models, non-singular Bernoulli
shifts which are shifts of independent not necessarily identically
distributed random variables and in-homogenous Markov (chains) shifts
which are fully supported on a topologically mixing subshift of finite
type. We give a short discussion on how the latter implies a certain
hurdle for a natural approach towards a variant on a classical question
of Bowen on the existence of a measure preserving $C^{1}$ Anosov
diffeomorphism of $\mathbb{T}^{2}$ which is not ergodic. Finally
we extend the result on Bernoulli shifts for countable groups which
have a version of the Hurewicz's ratio ergodic theorem. 

We end the introduction with a description of the result in the case
of non-singular Bernoulli shifts. A non singular Bernoulli shift is
a quadruple $\left(\left\{ 1,..,N\right\} ^{\mathbb{N}},\mathcal{B},\mu,T\right)$
where $\mu=\prod_{k\in\mathbb{Z}}\mu_{k}$ is a product measure on
$\{1,..,N\}^{\mathbb{Z}}$, $T$ is the shift map on $\left\{ 1,...,N\right\} ^{\mathbb{Z}}$
defined by 
\[
\left(Tx\right)_{i}=x_{i+1}
\]
and $\mu\sim\mu\circ T$ (i.e. the shift is $\mu$- non-singular).
By Kakutani's theorem, non-singularity of the shift is equivalent
to
\begin{equation}
\sum_{k\in\mathbb{Z}}\sum_{j=1}^{N}\left(\sqrt{\mu_{k}(j)}-\sqrt{\mu_{k-1}\left(j\right)}\right)^{2}<\infty.\label{eq: Kakutani}
\end{equation}
 In the case where in addition there exists a probability distribution
$P$ on $\left\{ 1,..,N\right\} $ such that $\mu_{k}=P$ for all
$k<0$, the Bernoulli shift is a $K$-automorphism in the sense of
Silva and Thieullen \cite{SilThieu95skewent}, hence it is ergodic
if and only if it is conservative. We prove that ergodicity is equivalent
to conservativity for general, not necessarily half stationary, Bernoulli
shifts satisfying a natural non-degeneracy condition.
\begin{thm}
\label{thm: ergodicity of Bernoulli shifts}If a non-singular Bernoulli
shift $\left(\left\{ 1,..,N\right\} ^{\mathbb{N}},\mathcal{B},\prod\mu_{k},T\right)$
is conservative and 
\begin{equation}
L=\sup_{k\in\mathbb{Z}}\frac{\max_{j\in\{1,...,N\}}\left(\mu_{k}\left(\{j\}\right)\right)}{\min_{j\in\{1,...,N\}}\left(\mu_{k}\left(\{j\}\right)\right)}<\infty,\label{eq: condition on B-shifts}
\end{equation}
 then it is ergodic. 
\end{thm}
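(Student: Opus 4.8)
The plan is to deduce the statement directly from Theorem~\ref{thm: nonsingular erg}. Throughout we regard $T$ as the two-sided shift on $X=\{1,\dots,N\}^{\mathbb{Z}}$ (so $T$ is invertible), $\mu=\prod_{k\in\mathbb{Z}}\mu_{k}$, and $\mathcal{A}$ the collection of finite unions of cylinders. Conservativity is part of the hypothesis, and non-singularity of $T$ is guaranteed by \eqref{eq: Kakutani}, so the work is to verify two things: that $(X,\mathcal{B},\mu,T)$ is double tail trivial, and that there is an everywhere finite measurable $L:\mathcal{T}\to(0,\infty)$ giving the required two-sided control of $\frac{d(\mu\circ T^{-n})}{d\mu}$. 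First note that \eqref{eq: condition on B-shifts} forces $\min_{j}\mu_{k}(\{j\})>0$ for every $k$, so each $\mu_{k}$ has full support on $\{1,\dots,N\}$; consequently every cylinder has positive $\mu$-measure and $\mathcal{A}$ is dense in $\mathcal{B}$ in the sense defined in the introduction, by the standard martingale/density argument.

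For double tail triviality: for any $A\in\mathcal{B}$ the set $\mathcal{T}(A)$ is, by construction, saturated for the equivalence relation $\mathcal{T}$, hence for each $n$ it is invariant under altering the coordinates in $[-n,n]$, which forces $\mathcal{T}(A)\in\sigma\left(x_{k}:|k|>n\right)$. Thus $\mathcal{T}(A)$ belongs to the two-sided tail $\sigma$-algebra $\bigcap_{n}\sigma\left(x_{k}:|k|>n\right)$, and since the coordinates $\{x_{k}\}_{k\in\mathbb{Z}}$ are $\mu$-independent, Kolmogorov's zero--one law yields $\mu(\mathcal{T}(A))\in\{0,1\}$. This is exactly double tail triviality.

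Next I would compute the cocycle. Since $(Tx)_{i}=x_{i+1}$, one gets $\mu\circ T^{-n}=\prod_{k\in\mathbb{Z}}\mu_{k+n}$, and therefore $\mu$-a.e.
\[
\frac{d\left(\mu\circ T^{-n}\right)}{d\mu}(x)=\prod_{k\in\mathbb{Z}}\frac{\mu_{k+n}(x_{k})}{\mu_{k}(x_{k})},
\]
the product converging by \eqref{eq: Kakutani}. If $(x,y)\in\mathcal{T}$ and $m=m(x,y):=\min\left\{ m\geq0:\ x_{k}=y_{k}\text{ for all }|k|>m\right\} $, then in the quotient
$\left(\frac{d(\mu\circ T^{-n})}{d\mu}(x)\right)\big/\left(\frac{d(\mu\circ T^{-n})}{d\mu}(y)\right)=\prod_{k}\frac{\mu_{k+n}(x_{k})\,\mu_{k}(y_{k})}{\mu_{k}(x_{k})\,\mu_{k+n}(y_{k})}$
every factor with $|k|>m$ equals $1$, while each of the at most $2m+1$ remaining factors lies between $L^{-2}$ and $L^{2}$ by \eqref{eq: condition on B-shifts}, \emph{uniformly in $n$}. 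Hence the hypotheses of Theorem~\ref{thm: nonsingular erg} hold with $L(x,y):=L^{2(2m(x,y)+1)}$, which is measurable and everywhere finite on $\mathcal{T}$ (since $\{(x,y)\in\mathcal{T}:m(x,y)\le m\}$ is Borel), and ergodicity follows.

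The point worth stressing is that there is essentially no obstacle: the uniformity in $n$ of the cocycle bound --- the one place where the product structure is used --- is automatic because two tail-equivalent points differ only on a fixed finite window of coordinates, so replacing $\mu_{k}$ by $\mu_{k+n}$ never enlarges the set of non-cancelling factors. The only mildly nontrivial ingredient is double tail triviality, and that is nothing more than Kolmogorov's zero--one law as above.
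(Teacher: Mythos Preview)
Your proof is correct and follows the same overall strategy as the paper: verify the two hypotheses of Theorem~\ref{thm: nonsingular erg} (double tail triviality and the uniform cocycle comparison along $\mathcal{T}$) and apply it. The cocycle computation is identical in spirit to the paper's Lemma~\ref{claim: double tail equiv}; your bound $L^{2(2m(x,y)+1)}$ is in fact slightly sharper than the paper's stated $L^{4N(x,y)}$, but this is immaterial.

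The one genuine difference is in how you establish double tail triviality. The paper (Lemma~\ref{prop: main}) argues directly: given $B$ of positive measure, it finds a cylinder $C$ that is mostly filled by $B$ and then shows that the coordinate-swap maps spread $B\cap C$ into every other cylinder of the same depth, forcing $\mu(\mathcal{T}(B))\geq 1-\epsilon$ for every $\epsilon$. You instead observe that $\mathcal{T}(A)$, being invariant under all finite coordinate changes, lies in the two-sided tail $\sigma$-algebra $\bigcap_{n}\sigma(x_{k}:|k|>n)$ and invoke Kolmogorov's zero--one law. Your route is shorter and more conceptual, exploiting the product structure directly; the paper's hands-on argument has the advantage that its template generalises verbatim to the topologically mixing SFT setting of Theorem~\ref{thm: TMS}, where independence fails and Kolmogorov's law is not available.
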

Notation:
\begin{itemize}
\item $a_{n}\lesssim b_{n}$ means that $\limsup_{n\to\infty}\frac{a_{n}}{b_{n}}\leq1$. 
\item For $a,b\grave{in}\mathbb{R}$ and $c>0$ we write $a=b\pm c$ if
$\left|a-b\right|<c$. 
\item For $a,b>0$ and $L>1$, $a=bL^{\pm\epsilon}$ means $bL^{-\epsilon}\leq a\leq bL^{\epsilon}$. 
\end{itemize}

\section{Proof of Theorem \ref{thm: simple ergodicity criteria}}

Suppose $T$ is not ergodic, then there exists a $T$ invariant set
$B\in\mathcal{B}$ with $\mu(B),\ \mu\left(X\backslash B\right)>0$.
As $\mathcal{A}$ generates $\mathcal{B}$ there exists $A\in\mathcal{A}$
with 
\[
\mu\left(A\cap B\right)\geq\left(1-\alpha\mu\left(X\backslash B\right)\right)\mu(A).
\]
By the assumptions of Theorem \ref{thm: simple ergodicity criteria},
there exists $n_{j}\to\infty$ and $N_{j}\to\infty$ such that for
$\mu$ almost every $x\in X$, 
\[
\liminf_{n\to\infty}\left(\frac{1}{N_{n}}\sum_{k=0}^{N_{n}-1}1_{A}\circ T^{n_{k}}(x)\right)\geq\alpha\mu(A).
\]
In addition, by $T$ invariance of $B$, for every $x\in X\backslash B$
and $n\in\mathbb{N}$
\begin{equation}
\sum_{k=0}^{n-1}1_{A}\circ T^{n_{k}}(x)=\sum_{k=0}^{n-1}1_{A\backslash B}\circ T^{n_{k}}(x).\label{eq: bbb}
\end{equation}

By Fatou's lemma,
\begin{align*}
\alpha\mu(A)\mu(X\backslash B) & \leq\int_{X\backslash B}\liminf_{n\to\infty}\left(\frac{1}{N_{n}}\sum_{k=0}^{N_{n}-1}1_{A}\circ T^{n_{k}}(x)\right)d\mu\\
 & \leq\liminf_{n\to\infty}\int_{X\backslash B}\left(\frac{1}{N_{n}}\sum_{k=0}^{N_{n}-1}1_{A}\circ T^{n_{k}}(x)\right)d\mu\\
 & \overset{\eqref{eq: bbb}}{\leq}\liminf_{n\to\infty}\int_{X\backslash B}\left(\frac{1}{N_{n}}\sum_{k=0}^{N_{n}-1}1_{A\backslash B}\circ T^{n_{k}}(x)\right)d\mu\\
 & \leq\mu(A\backslash B)<\alpha\mu(A)\mu\left(X\backslash B\right)
\end{align*}

This is a contradiction, hence $T$ is ergodic. 

\section{Folklore criteria for ergodicity of Poisson suspensions}

Let $\left(X,\mathcal{B},\mu\right)$ be a standard $\sigma$-finite
measure space and $\left(X^{*},\mathcal{B}^{*},\mu^{*}\right)$ its
associated Poisson point process. That is, $X^{*}$ is the collection
of all countable subsets of $X$ (or counting measures), $\mathcal{B}^{*}$
the $\sigma$-algebra generated by 
\[
\left\{ \nu\in X^{*}:\ N(A)(\nu)=n\right\} 
\]
with $A\in\mathcal{B}$ with $0<\mu(A)<\infty$ and $n\in\mathbb{N}\cup\{0,\infty\}$,
where 

\[
N(A)(\nu)=|\nu\cap A|.
\]
 Finally, the measure $\mu^{*}$ is the unique measure such that for
all pairwise disjoint sets $A_{1},A_{2},..,A_{n}\in\mathcal{\mathcal{B}}$,
the random variables $\left\{ N\left(A_{i}\right)\right\} _{i=1}^{n}$
are independent and for each $A\in\mathcal{B}$ with $\mu(A)<\infty$,
$N(A)$ is Poisson distributed with parameter $\mu(A)$, that is,
for all $k\in\mathbb{N\cup}\{0\}$,
\[
\mu^{*}\left(N(A)=k\right)=\frac{e^{-\mu(A)}\mu(A)^{k}}{k!}.
\]
Given a measure preserving transformation $T:\left(X,\mathcal{B},\mu\right)\to\left(X,\mathcal{B},\mu\right)$,
its Poisson suspension is a probability preserving map $T_{*}:\left(X^{*},\mathcal{B}^{*},m^{*}\right)\to\left(X^{*},\mathcal{B}^{*},m^{*}\right)$
defined by 
\[
T_{*}\left(\left\{ x\right\} _{x\in\nu}\right)=\left\{ Tx\right\} _{x\in\nu}.
\]

In what follows we will write 
\[
\mathcal{B}_{fin}=\left\{ B\in\mathcal{B}:\ 0<\mu(B)<\infty\right\} .
\]

\begin{thm}
\label{thm: Ergodicity of P-susp} Let $\left(X,\mathcal{B},\mu,T\right)$
be a $\sigma$-finite measure preserving system. Then $T_{*}$ is
ergodic if and only if $T$ has no absolutely continuous invariant
probability measure.
\end{thm}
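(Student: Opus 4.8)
The plan is to establish the two implications separately, deriving the substantive one --- that absence of an absolutely continuous invariant probability (a.c.i.p.) for $T$ forces ergodicity of $T_{*}$ --- from Theorem~\ref{thm: simple ergodicity criteria}, using only that $N(A)$ is $\mathrm{Poisson}(\mu(A))$-distributed and that counts on disjoint sets are independent, rather than any Fock space or chaos input. For necessity of the condition: if $\lambda$ is a $T$-invariant probability with $d\lambda/d\mu=h$ and $T$ is invertible, then $h\circ T^{-1}=h$ $\mu$-a.e., so for a suitable $t\in(0,1)$ the set $Y=\{h\wedge 1>t\}$ is $T$-invariant with $0<\mu(Y)<\infty$; since $T^{-1}Y=Y$ forces $N(Y)\circ T_{*}=N(Y)$, the event $\{N(Y)=0\}$ is $T_{*}$-invariant of measure $e^{-\mu(Y)}\in(0,1)$, and hence $T_{*}$ is not ergodic.

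For sufficiency, assume $T$ has no a.c.i.p., and take $\mathcal{A}\subset\mathcal{B}^{*}$ to be the finite disjoint unions of product cylinders $\{\nu:\ N(B_{1})(\nu)=j_{1},\dots,N(B_{r})(\nu)=j_{r}\}$ with $B_{1},\dots,B_{r}\in\mathcal{B}_{fin}$ pairwise disjoint; this is a $\pi$-system generating $\mathcal{B}^{*}$, and a routine approximation shows it is dense in $\mathcal{B}^{*}$ in the sense of the Introduction. The first genuine step is to make ``no a.c.i.p.'' quantitative. If some $f\in L^{2}(\mu)$ had $f\circ T=f$ and $f\ne 0$, then $\{|f|>t\}$ would be $T$-invariant with positive finite $\mu$-measure for small $t$, an a.c.i.p.; so the Koopman operator on $L^{2}(\mu)$ has no nonzero invariant vector, and von Neumann's mean ergodic theorem gives $\frac{1}{N}\sum_{m=0}^{N-1}\mu(A\cap T^{-m}B)=\big\langle\frac{1}{N}\sum_{m=0}^{N-1}1_{B}\circ T^{m},\,1_{A}\big\rangle\to 0$ for all $A,B\in\mathcal{B}_{fin}$.

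Fix $A^{*}\in\mathcal{A}$ with base sets $B_{1},\dots,B_{r}\in\mathcal{B}_{fin}$, and set $\varphi(m)=\max_{i,i'}\mu(B_{i}\cap T^{-m}B_{i'})$; by the previous step $\frac{1}{N}\sum_{m=0}^{N-1}\varphi(m)\to 0$, so $G_{\delta}=\{m\geq0:\varphi(m)<\delta\}$ has density $1$ for every $\delta>0$. I then choose $n_{0}<n_{1}<\cdots$ greedily so that $n_{k+1}-n_{j}\in G_{2^{-(k+1)}}$ for all $j\leq k$, which is possible since $\bigcap_{j\leq k}(n_{j}+G_{2^{-(k+1)}})$ still has density $1$. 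With $Y_{k}:=1_{A^{*}}\circ T_{*}^{n_{k}}$ and $N(B)(T_{*}^{m}\nu)=N(T^{-m}B)(\nu)$ one has $Y_{k}=1_{\{N(T^{-n_{k}}B_{1})=j_{1},\dots\}}$, so $E_{\mu^{*}}[Y_{k}]=\mu^{*}(A^{*})=:p>0$, while for $k\ne l$ the only coupling of $Y_{k}$ and $Y_{l}$ runs through the overlaps $T^{-n_{k}}B_{i}\cap T^{-n_{l}}B_{i'}$, of total $\mu$-mass $\leq r^{2}\varphi(|n_{k}-n_{l}|)\leq r^{2}2^{-\max(k,l)}$. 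Splitting on the event that the process charges none of these overlaps and using continuity of the Poisson p.m.f.\ in its parameter gives $|\mathrm{Cov}(Y_{k},Y_{l})|\leq C(A^{*})\varphi(|n_{k}-n_{l}|)$, hence $\sum_{k\ne l}|\mathrm{Cov}(Y_{k},Y_{l})|<\infty$ and $\mathrm{Var}\big(\frac{1}{N}\sum_{k=0}^{N-1}Y_{k}\big)=O(1/N)$. Chebyshev and Borel--Cantelli along $N_{n}=n^{2}$ then give $\frac{1}{N_{n}}\sum_{k=0}^{N_{n}-1}1_{A^{*}}\circ T_{*}^{n_{k}}\to\mu^{*}(A^{*})$ $\mu^{*}$-a.e.\ (summing over the finitely many cylinders composing $A^{*}$), so the hypothesis of Theorem~\ref{thm: simple ergodicity criteria} holds with $\alpha=1$ and $T_{*}$ is ergodic.

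The main obstacle is the near-independence bound $|\mathrm{Cov}(Y_{k},Y_{l})|\leq C(A^{*})\varphi(|n_{k}-n_{l}|)$ together with the construction of one sequence $(n_{k})$ simultaneously controlling all the relevant overlaps; everything after that is a second-moment law of large numbers fed into Theorem~\ref{thm: simple ergodicity criteria}. Finally I note that the same estimate gives $\frac{1}{N}\sum_{k=0}^{N-1}Y_{k}\to p$ along the whole sequence $N\to\infty$, which is exactly what is needed for the remark that $T_{*}$ is weak mixing as soon as it is ergodic.
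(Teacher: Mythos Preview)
Your proof is correct and takes essentially the same approach as the paper: construct a sparse sequence $(n_k)$ along which the base-space overlaps $\mu(B_i\cap T^{-(n_k-n_l)}B_{i'})$ decay geometrically, convert this via the Poisson independence structure into a covariance bound on the indicators $Y_k=1_{A^*}\circ T_*^{n_k}$, run a second-moment argument, and feed the resulting a.e.\ convergence into Theorem~\ref{thm: simple ergodicity criteria} with $\alpha=1$. The only differences are cosmetic---the paper obtains the Ces\`aro decay $\frac{1}{N}\sum_{m<N}\mu(A\cap T^{-m}B)\to 0$ from the pointwise ergodic theorem plus dominated convergence where you invoke von Neumann, and it extracts the a.e.-convergent subsequence $N_n$ from convergence in $\mu^*$-measure rather than via Borel--Cantelli along $N_n=n^2$.
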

If $T$ has an absolutely continuous invariant probability (a.c.i.p.),
then it is immediate that $T_{*}$ is not ergodic as in that case
there is a set $A\in\mathcal{B}_{fin}$ with $T^{-1}A=A$, For all
$k\in\mathbb{N}$ the sets 
\[
\left[N(A)=K\right]=\left\{ \nu\in X^{*}:\ N(A)(\nu)=k\right\} 
\]
are $T_{*}$ invariant sets of positive, non-full $\mu^{*}$-measure.
Our proof of ergodicity of $T_{*}$ when $T$ has no a.c.i.p. is by
establishing the conditions of Theorem \ref{thm: simple ergodicity criteria}
with the collection of sets 
\[
\mathcal{A}^{*}=\left\{ \bigcap_{i=1}^{L}\left[N\left(A_{i}\right)=k_{i}\right]:\ L\in\mathbb{N},\ \left\{ A_{i}\right\} _{i=1}^{L}\subset\mathcal{B}_{fin}\ (\text{pairwise disjoint)},\ \left\{ k_{i}\right\} _{i=1}^{L}\subset\mathbb{N}\cup\{0\}\right\} 
\]
 and $\alpha=1$. 
\begin{lem}
\label{lem: Null part}Let $\left(X,\mathcal{B},\mu,T\right)$ be
a $\sigma$-finite measure preserving system. If there exists no absolutely
continuous invariant probability measure, then for all $L\in\mathbb{N},$
$A_{1},A_{2},..A_{L}\in\mathcal{B}_{fin},$ there exists a strictly
increasing subsequence $n_{k}\to\infty$ such that for all $\alpha,\beta\in\left\{ 1,2,...,L\right\} $,
\[
\lim_{j-l\to\infty}\mu\left(A_{\alpha}\cap T^{-\left(n_{j}-n_{l}\right)}A_{\beta}\right)=0.
\]
\end{lem}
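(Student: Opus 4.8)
The plan is to reduce the statement to a single set $A=\bigcup_{i=1}^{L}A_{i}$ and a Ces\`aro statement about its self-correlations $c_{m}:=\mu\!\left(A\cap T^{-m}A\right)$, and then to extract the subsequence $(n_{k})$ by a greedy argument. Since $A_{\alpha},A_{\beta}\subseteq A\in\mathcal B_{fin}$ we have $\mu\!\left(A_{\alpha}\cap T^{-m}A_{\beta}\right)\le c_{m}$ for all $\alpha,\beta$ and all $m\ge0$, so it is enough to find a strictly increasing sequence $n_{k}\to\infty$ with $c_{n_{j}-n_{l}}\to0$ as $j-l\to\infty$ (recall that $T$ need not be invertible, so only pairs with $j>l$ occur).

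The analytic input I would use is that, because $T$ has no a.c.i.p., the Koopman isometry $Uf=f\circ T$ on $L^{2}(\mu)$ has no nonzero fixed vector: if $f\in L^{2}(\mu)$, $f\neq0$, and $f\circ T=f$, then $|f|\circ T=|f|$, so for $t>0$ small enough the superlevel set $C=\{|f|>t\}$ satisfies $T^{-1}C=C$ and $0<\mu(C)\le t^{-2}\|f\|_{2}^{2}<\infty$ by Chebyshev's inequality; then $\mu(\cdot\cap C)/\mu(C)$ is an absolutely continuous invariant probability, a contradiction (in particular $\mu(X)=\infty$). The mean ergodic theorem applied to the contraction $U$ on the Hilbert space $L^{2}(\mu)$ then yields $\frac1N\sum_{k=0}^{N-1}U^{k}1_{A}\to0$ in norm, and pairing with $1_{A}$,
\[
\frac1N\sum_{k=0}^{N-1}c_{k}=\Big\langle \tfrac1N\sum_{k=0}^{N-1}U^{k}1_{A},\,1_{A}\Big\rangle\xrightarrow[N\to\infty]{}0 .
\]
Now for the extraction: since $c_{k}\ge0$ and its Ces\`aro averages vanish, for every $\varepsilon>0$ the set $D_{\varepsilon}:=\{k\ge1:\ c_{k}\ge\varepsilon\}$ has zero density. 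Choose $n_{1}<n_{2}<\cdots$ greedily: having chosen $n_{1}<\dots<n_{m}$, the set $\{1,\dots,n_{m}\}\cup\bigcup_{l=1}^{m}\bigl(D_{1/m}-n_{l}\bigr)$ has zero density (a finite union of zero-density sets together with a finite set), hence is not cofinite, so we may pick $n_{m+1}>n_{m}$ outside it; then $c_{n_{m+1}-n_{l}}<1/m$ for all $l\le m$. Consequently $c_{n_{j}-n_{l}}<1/(j-1)$ whenever $j>l$, so $c_{n_{j}-n_{l}}\to0$ as $j\to\infty$, in particular as $j-l\to\infty$. Combined with $\mu\!\left(A_{\alpha}\cap T^{-(n_{j}-n_{l})}A_{\beta}\right)\le c_{n_{j}-n_{l}}$ this gives the lemma.

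I expect the only step with genuine content to be the second paragraph, namely the equivalence ``no a.c.i.p.\ $\Longleftrightarrow$ no nonzero $T$-invariant function in $L^{2}(\mu)$''. The point that requires care is that the invariant superlevel set must have \emph{finite} measure, which is precisely why one must work in $L^{2}$ (via Chebyshev) rather than merely in $L^{1}$, together with the observation that it suffices to produce an absolutely continuous, not necessarily equivalent, invariant probability. The combinatorial extraction is elementary, but it does rely essentially on $c_{k}\ge0$ (to upgrade vanishing Ces\`aro averages to zero-density superlevel sets), and it does not use disjointness of the $A_{i}$, so the case needed for the family $\mathcal A^{*}$ is covered.
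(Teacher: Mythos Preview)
Your proof is correct and follows the same two-step outline as the paper: first establish that the Ces\`aro averages of the correlations vanish, then extract the subsequence by a density/greedy argument. You streamline both steps. For the first, you reduce to the single set $A=\bigcup_{i}A_{i}$, replacing the paper's bookkeeping over all pairs $(\alpha,\beta)$ by the single sequence $c_{m}=\mu(A\cap T^{-m}A)$; and to obtain $\frac{1}{N}\sum_{k<N}c_{k}\to0$ you use the von Neumann mean ergodic theorem together with the elementary observation (via Chebyshev) that ``no a.c.i.p.'' forces the $U$-invariant subspace of $L^{2}(\mu)$ to be trivial. The paper instead invokes the pointwise ergodic theorem in the no-a.c.i.p.\ setting to get $\frac{1}{n}S_{n}(1_{A_{\beta}})\to0$ a.e., and then integrates over $A_{\alpha}$ via dominated convergence. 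Your $L^{2}$ route is arguably more self-contained, since the pointwise statement already draws on some infinite ergodic theory. The greedy extraction is essentially identical to the paper's inductive construction (the paper aims for $2^{-j}$ where you aim for $1/(j-1)$). One trivial slip: in the greedy step the set to avoid is $\{1,\dots,n_{m}\}\cup\bigcup_{l=1}^{m}(D_{1/m}+n_{l})$ rather than $\bigcup_{l}(D_{1/m}-n_{l})$, since $n_{m+1}-n_{l}\notin D_{1/m}$ is equivalent to $n_{m+1}\notin D_{1/m}+n_{l}$.
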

A subset $K\subset\mathbb{N}$ has \textit{full Banach density} if
\[
\lim_{n\to\infty}\frac{\left|K\cap[1,n]\right|}{n}=1.
\]
In what follows we will use the well known fact that if $a_{n}\geq0$
satisfies 
\[
\lim_{n\to\infty}\frac{\sum_{i=1}^{n}a_{i}}{n}=0,
\]
then for all $\epsilon>0$, the sequence $K^{\epsilon}=\left\{ n\in\mathbb{N}:\ 0\leq a_{n}<\epsilon\right\} $
has full Banach density. Another trivial consequence of the definition
of full Banach density is that if $K_{1},K_{2},..,K_{N}\subset\mathbb{N}$
are sets of full Banach density then $\bigcap_{i=1}^{N}K_{i}$ has
full Banach density.
\begin{proof}
Let $A_{1},A_{2},...,A_{L}\in\mathcal{B}_{fin}$. We construct $n_{k}\to\infty$
by an inductive procedure. As $T$ is $\mu$ measure preserving and
there exists no a.c.i.p., given a finite set $F\subset\mathbb{N}$
and $\left\{ B_{\alpha}\right\} _{\alpha\in F}\subset\mathcal{B}_{fin}$
, by the pointwise ergodic theorem, for all $\alpha\in F$, 
\[
\frac{1}{n}S_{n}\left(1_{B_{\alpha}}\right)\xrightarrow[n\to\infty]{}0,\ \ \mu-\text{a.e.}
\]

By the dominated convergence theorem for all $\alpha,\beta\in F$,
\[
\frac{1}{n}\sum_{k=0}^{n-1}\mu\left(A_{\alpha}\cap T^{-k}A_{\beta}\right)=\int_{A_{\alpha}}\left(\frac{1}{n}S_{n}\left(1_{A_{\beta}}\right)\right)d\mu\xrightarrow[n\to\infty]{}0.
\]
We conclude, using the previous discussion on sets of full Banach
density, that for all $\epsilon>0$, the set 
\[
K:^{\epsilon}=\left\{ n\in\mathbb{N}:\ \forall\alpha,\beta\in F,\ \mu\left(A_{\alpha}\cap T^{-n}A_{\beta}\right)<\epsilon\right\} 
\]
is of full Banach density. 

Taking first $F=\left\{ 1,..,L\right\} $ and for $\alpha\in F$,
$B_{\alpha}=A_{\alpha}$, we can choose $n_{1}\in\mathbb{N}$ such
that for all $\alpha,\beta\in\left\{ 1,...,L\right\} $, 
\[
\mu\left(A_{\alpha}\cap T^{-n_{1}}A_{\beta}\right)<\frac{1}{2}.
\]
Assume that we have chosen a sequence $n_{0}=0$ and $n_{1},..,n_{k}\in\mathbb{N}$
such that for all $0\leq l<j\leq k$ and $\alpha,\beta\in F$, 
\[
\mu\left(T^{-n_{l}}A_{\alpha}\cap T^{-n_{j}}A_{\beta}\right)=\mu\left(A_{\alpha}\cap T^{-\left(n_{j}-n_{l}\right)}A_{\beta}\right)<2^{-j}.
\]
 Looking at $F_{k}=\left\{ 1,2,..,kL\right\} $ and 
\[
B_{s}=T^{-n_{j}}A_{\alpha}\ \ \text{for }s=jL+\alpha,
\]
we conclude that the set 
\[
\left\{ n\in\mathbb{N}:\ \forall\alpha,\beta\in F_{k},\ \mu\left(B_{\alpha}\cap T^{-n}B_{\beta}\right)<2^{-(k+1)}\right\} 
\]
is of full Banach density. In particular there exists $n_{k+1}>n_{k},$
such that for all $\alpha,\beta\in\{1,..,L\}$, and $0\leq j<k+1$
\[
\mu\left(T^{-n_{j}}A_{\alpha}\cap T^{-n_{k+1}}A_{\beta}\right)=\mu\left(B_{\left(jL+\alpha\right)}\cap T^{-n_{k+1}}B_{\beta}\right)<2^{-(k+1)}
\]
as was required. This concludes the proof of the lemma. 
\end{proof}
Given $B=\bigcap_{j=1}^{L}\left[N\left(A_{j}\right)=k_{j}\right]$
a (Poissonian) cylinder set we write $\mathbf{S}(B)=\bigcup_{i=1}^{L}A_{j}$. 
\begin{lem}
\label{lem: mixing of Poisson}If $B,C\in\mathcal{B}^{*}$ are cylinder
sets then 
\[
\left|\mu^{*}\left(B\cap C\right)-\mu^{*}\left(B\right)\mu^{*}\left(C\right)\right|\leq2\mu\left({\bf S}\left(B\right)\cap{\bf S}\left(C\right)\right).
\]
\end{lem}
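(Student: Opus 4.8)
The plan is to reduce the estimate to the case where the two cylinders are built from the \emph{same} finite disjoint family of sets, and then to exploit independence of the Poisson counts over disjoint sets. First I would write $B=\bigcap_{i=1}^{p}[N(A_i)=k_i]$ and $C=\bigcap_{j=1}^{q}[N(C_j)=l_j]$ with $\{A_i\}$ pairwise disjoint and $\{C_j\}$ pairwise disjoint, and set $S={\bf S}(B)\cap{\bf S}(C)=\bigl(\bigcup_i A_i\bigr)\cap\bigl(\bigcup_j C_j\bigr)$. Introduce the common refinement: split each $A_i$ into $A_i\setminus\bigcup_j C_j$ together with the pieces $A_i\cap C_j$, and similarly split each $C_j$; this yields a single finite pairwise disjoint family $\{D_r\}_r$ such that every $A_i$ and every $C_j$ is a disjoint union of some of the $D_r$'s. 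Writing the event $[N(A_i)=k_i]$ as the disjoint union over all ways of distributing $k_i$ among the $D_r\subset A_i$, one expresses $B$ and $C$ as (finite) disjoint unions of atoms of the form $\bigcap_r[N(D_r)=m_r]$. By the defining independence property of $\mu^*$ over disjoint sets, the $\mu^*$-measure of such an atom factorizes as $\prod_r \mu^*(N(D_r)=m_r)$.

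Second, I would isolate the ``bad'' $D_r$'s, namely those contained in $S$; call their union $S$ itself (since $S$ is exactly the union of the $D_r$ that lie in both ${\bf S}(B)$ and ${\bf S}(C)$), and call $E_B\subset {\bf S}(B)\setminus S$, $E_C\subset{\bf S}(C)\setminus S$ the remaining parts. The key point is that $E_B$ and $E_C$ are disjoint and each is disjoint from $S$. Now condition on the configuration of counts on the $D_r\subset S$: given such a configuration, $B$ restricts to an event depending only on counts in $E_B$ and $C$ to an event depending only on counts in $E_C$, and these are independent. Hence, writing $\mu^*(B\cap C)=\sum_{\text{configs }c\text{ on }S}\mu^*(c)\,\mu^*(B\mid c)\,\mu^*(C\mid c)$ and similarly $\mu^*(B)=\sum_c\mu^*(c)\mu^*(B\mid c)$, $\mu^*(C)=\sum_c\mu^*(c)\mu^*(C\mid c)$, the difference $\mu^*(B\cap C)-\mu^*(B)\mu^*(C)$ is a covariance of the two $[0,1]$-valued functions $c\mapsto \mu^*(B\mid c)$ and $c\mapsto\mu^*(C\mid c)$ with respect to the probability law of the $S$-configuration.

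Third, I would bound this covariance crudely: for $[0,1]$-valued random variables $U,V$ on a common space, $|\mathbb{E}[UV]-\mathbb{E}[U]\mathbb{E}[V]|\le \mathbb{P}(U<1)+\mathbb{P}(V<1)$ or, even more simply, one notes that both conditional probabilities equal their unconditional values \emph{unless} the $S$-configuration is nonzero, and the probability that $N(S)\ne 0$ is $1-e^{-\mu(S)}\le \mu(S)$. Concretely, on the event $\{N(S)=0\}$ the constraints coming from the $D_r\subset S$ are automatically the generic ones and $\mu^*(B\mid c),\mu^*(C\mid c)$ take fixed values $b_0,c_0$; so the covariance is supported on $\{N(S)\ne 0\}$ and is bounded in absolute value by $2\,\mu^*(N(S)\ne 0)=2(1-e^{-\mu(S)})\le 2\mu(S)=2\mu({\bf S}(B)\cap{\bf S}(C))$, which is exactly the claimed inequality.

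The main obstacle is purely bookkeeping: setting up the common refinement $\{D_r\}$ and checking carefully that, after conditioning on the counts in the overlap $S$, the residual descriptions of $B$ and $C$ genuinely depend on \emph{disjoint} sets of coordinates $E_B$ and $E_C$ (so that independence applies). Once that combinatorial setup is in place, the probabilistic estimate is the one-line covariance bound above, so no delicate analysis is needed; I would take care only to handle the degenerate cases (some $A_i$ or $C_j$ entirely inside $S$, or empty refinement pieces) without fuss, since they only make the bound easier.
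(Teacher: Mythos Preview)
Your strategy is correct and leads to the stated bound, but it is a genuinely different (and heavier) route than the paper's. The paper does not build a common refinement or compute a covariance at all: it simply replaces $B=\bigcap_{j}[N(A_j)=k_j]$ by the modified cylinder $D=\bigcap_{j}[N(A_j\setminus{\bf S}(C))=k_j]$, observes that $B\triangle D\subset[N({\bf S}(B)\cap{\bf S}(C))>0]$ so that $\mu^*(B\triangle D)\le 1-e^{-\mu(S)}\le\mu(S)$, and then uses that ${\bf S}(D)$ and ${\bf S}(C)$ are disjoint, hence $\mu^*(D\cap C)=\mu^*(D)\mu^*(C)$ exactly. Two applications of the bound on $\mu^*(B\triangle D)$ then give the factor $2$. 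This sidesteps all the bookkeeping you flagged as the ``main obstacle''.

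Your conditioning argument is sound, but one sentence in it is not right and should be tightened: on $\{N(S)=0\}$ the conditional probabilities $\mu^*(B\mid c)=b_0$ and $\mu^*(C\mid c)=c_0$ are \emph{not} the unconditional values $\mu^*(B),\mu^*(C)$; they are the probabilities of the ``stripped'' events $\bigcap_i[N(A_i\setminus S)=k_i]$ and $\bigcap_j[N(C_j\setminus S)=l_j]$. The covariance bound nevertheless follows cleanly: with $U,V\in[0,1]$ constant (equal to $b_0,c_0$) on an event of probability $1-q$, one has
\[
\bigl|E[UV]-E[U]E[V]\bigr|\le \bigl|E[(U-b_0)V]\bigr|+E[V]\,\bigl|b_0-E[U]\bigr|\le q+q=2q,
\]
since $U-b_0$ vanishes on $\{N(S)=0\}$ and $|U-b_0|\le 1$. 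With $q=\mu^*(N(S)\neq 0)\le\mu(S)$ this gives exactly your claimed estimate. So the plan works once this step is written properly; what the paper's approach buys is that it reaches the same inequality in three lines without the refinement $\{D_r\}$ or any conditional-independence bookkeeping.
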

\begin{proof}
Note that as $\mu^{*}$ is a probability measure we can assume that
$\mu\left({\bf S}\left(B\right)\triangle{\bf S}\left(C\right)\right)<1$.
Write $B=\bigcap_{j=1}^{L}\left[N\left(A_{j}\right)=k_{j}\right]$
and $D=\bigcap_{j=1}^{L}\left[N\left(A_{j}\backslash{\bf S}\left(C\right)\right)=k_{j}\right]$.
Note that 
\[
B\triangle D\subset\left[N\left({\bf S}\left(B\right)\cap{\bf S}\left(C\right)\right)>0\right].
\]
Thus 
\begin{align*}
\mu^{*}\left(B\triangle D\right) & \leq1-\mu^{*}\left(N\left({\bf S}\left(B\right)\cap{\bf S}\left(C\right)\right)=0\right)\\
 & =1-\exp\left(-\mu\left({\bf S}\left(B\right)\cap{\bf S}\left(C\right)\right)\right)\leq\mu\left({\bf S}\left(B\right)\cap{\bf S}\left(C\right)\right).
\end{align*}
As ${\bf S}\left(C\right)\cap{\bf S}\left(D\right)=\emptyset$ by
the independence property of the Poisson process,
\[
\mu^{*}\left(D\cap C\right)=\mu^{*}\left(D\right)\mu^{*}\left(C\right)=\mu^{*}\left(B\right)\mu^{*}\left(C\right)\pm\mu\left({\bf S}\left(B\right)\cap{\bf S}\left(C\right)\right).
\]
Similarly 
\[
\left|\mu^{*}\left(B\cap C\right)-\mu^{*}\left(D\cap C\right)\right|\leq\mu^{*}\left(B\triangle D\right).
\]
This shows that
\[
\left|\mu^{*}\left(B\cap C\right)-\mu^{*}\left(B\right)\mu^{*}\left(C\right)\right|\leq2\mu\left({\bf S}\left(B\right)\cap{\bf S}\left(C\right)\right).
\]
\end{proof}
\begin{cor}
\label{cor: mixing}For all $A_{1},A_{2},..,A_{L}\in\mathcal{B}_{fin}$
and $k_{1},k_{2},..,k_{L}\in\mathbb{N}\cup\{0\}$, there exists a
subsequence $n_{j}\to\infty$ such that writing $B=\bigcap_{j=1}^{L}\left[N\left(A_{j}\right)=k_{j}\right]\in\mathcal{B}^{*}$,
for all $0\leq l<l$,
\[
\left|\mu^{*}\left(T_{*}^{-n_{l}}B\cap T_{*}^{-n_{j}}B\right)-\mu^{*}\left(B\right)^{2}\right|\leq2^{-\left(j-l\right)}.
\]
\end{cor}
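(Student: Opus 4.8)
The plan is to deduce the corollary directly from Lemmas \ref{lem: Null part} and \ref{lem: mixing of Poisson}; the point is that under $T_*^{-n}$ a Poissonian cylinder stays a Poissonian cylinder and its support is transported by $T^{-n}$.

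First I would record that $N(A)\circ T_*=N(T^{-1}A)$ (treating configurations as counting measures, $(T_*\nu)(A)=\nu(T^{-1}A)$), so that iterating gives $T_*^{-n}B=\bigcap_{j=1}^{L}\left[N\left(T^{-n}A_j\right)=k_j\right]$. Since $T$ is measure preserving, the sets $T^{-n}A_1,\dots,T^{-n}A_L$ are pairwise disjoint and lie in $\mathcal{B}_{fin}$, so $T_*^{-n}B$ is again a cylinder in $\mathcal{A}^*$ with $\mathbf{S}(T_*^{-n}B)=\bigcup_{j=1}^{L}T^{-n}A_j=T^{-n}\mathbf{S}(B)$. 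Now fix any strictly increasing sequence $n_0=0<n_1<n_2<\cdots$ and $0\le l<j$. Applying Lemma \ref{lem: mixing of Poisson} to the cylinders $T_*^{-n_l}B$ and $T_*^{-n_j}B$, and using both the $T_*$-invariance of $\mu^*$ (so each of these cylinders has measure $\mu^*(B)$) and the $T$-invariance of $\mu$, I get
\[
\left|\mu^*\!\left(T_*^{-n_l}B\cap T_*^{-n_j}B\right)-\mu^*(B)^2\right|\le 2\,\mu\!\left(\mathbf{S}(B)\cap T^{-(n_j-n_l)}\mathbf{S}(B)\right)\le 2\sum_{\alpha,\beta=1}^{L}\mu\!\left(A_\alpha\cap T^{-(n_j-n_l)}A_\beta\right),
\]
the last step by writing $\mathbf{S}(B)=\bigcup_\alpha A_\alpha$ and bounding the measure of a union by the sum of the measures. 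Writing $\phi(m)=\sum_{\alpha,\beta=1}^{L}\mu\!\left(A_\alpha\cap T^{-m}A_\beta\right)$, everything reduces to choosing $(n_i)$ with $\phi(n_j-n_l)\le 2^{-(j-l)-1}$ for all $0\le l<j$.

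To achieve this I would apply Lemma \ref{lem: Null part} to $A_1,\dots,A_L$ to obtain a strictly increasing sequence $(p_k)_{k\ge 0}$ with $p_0=0$ along which $\phi(p_a-p_b)\to 0$ as $a-b\to\infty$ (a finite sum of quantities vanishing by that lemma). Hence for each $i\ge1$ there is $D_i\in\mathbb{N}$, which may be taken non-decreasing in $i$, with $\phi(p_a-p_b)<2^{-i-1}$ whenever $a-b\ge D_i$. Setting $k_0=0$, $k_i=k_{i-1}+D_i$ and $n_i=p_{k_i}$, one has for $0\le l<j$ that $k_j-k_l=\sum_{i=l+1}^{j}D_i\ge D_j\ge D_{j-l}$, whence $\phi(n_j-n_l)=\phi(p_{k_j}-p_{k_l})<2^{-(j-l)-1}$; combined with the displayed inequality this gives the corollary. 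The only slightly delicate point is this last bookkeeping — upgrading the qualitative decorrelation of Lemma \ref{lem: Null part} to a geometric rate valid simultaneously over all pairs $(l,j)$ — but it is a routine greedy choice of a sub-subsequence; all the real content already resides in the two lemmas.
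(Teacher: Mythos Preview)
Your proof is correct and follows essentially the same route as the paper: identify $\mathbf{S}(T_*^{-n}B)=T^{-n}\mathbf{S}(B)$, apply Lemma~\ref{lem: mixing of Poisson}, and then invoke Lemma~\ref{lem: Null part} for the base decorrelation. The only difference is cosmetic: the paper applies Lemma~\ref{lem: Null part} directly to the single set $\mathbf{S}(B)$ and reads off the geometric rate from the explicit construction in that lemma's proof, whereas you decompose $\mathbf{S}(B)$ into the $A_\alpha$'s and carry out the sub-subsequence extraction by hand---but this is the same argument at a different level of detail.
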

\begin{proof}
For all cylinder sets $B$ and $n\in\mathbb{Z},$ 
\[
{\bf S}\left(T_{*}^{n}B\right)=T^{n}{\bf S}\left(B\right).
\]
By Lemma \ref{lem: Null part} there exists $n_{j}\to\infty$ such
that for all $1\leq l<j$ , 
\begin{align*}
\mu\left({\bf S}\left(T_{*}^{-n_{j}}B\right)\cap{\bf S}\left(T_{*}^{-n_{l}}B\right)\right) & =\mu\left(T^{-n_{j}}{\bf S}\left(B\right)\cap T^{-n_{l}}{\bf S}\left(B\right)\right)\\
 & =\mu\left({\bf S}\left(B\right)\cap T^{-\left(n_{j}-n_{l}\right)}{\bf S}\left(B\right)\right)\leq2^{-\left(j+1-l\right)}\mu^{*}.
\end{align*}
The conclusion follows from Lemma \ref{lem: mixing of Poisson} as
$T_{*}$ is $\mu^{*}$ preserving.
\end{proof}
\begin{proof} [Proof of \ref{thm: Ergodicity of P-susp}]

Note that as $T$ preserves $\mu$, if $T$ has an a.c.i.p., then
there exists a set a set $A\in\mathcal{B}$ such that $T^{-1}A=A\mod\mu$
and $\mu\left(A\right)<\infty$. In that case for each $K\in\mathbb{N}$,
the set $\left[N(A)=k\right]$ is $T$ invariant and of positive measure.
As for $K\neq K'$, 
\[
\left[N(A)=K\right]\cap\left[N(A)=K'\right]=\emptyset,
\]
 this is a contradiction to ergodicity. 

In the other direction assume $T$ has no absolutely continuous invariant
probability measure. The collection $\mathcal{A}^{*}$ generates $\mathcal{B}^{*}.$
We show that the conditions of Theorem \ref{thm: simple ergodicity criteria}
hold for all $B\in\mathcal{A}^{*}$. Let $B\in\mathcal{A}^{*}$. By
Corollary \ref{cor: mixing}, there exists a sequence $n_{j}\to\infty$
such that for all $l<j$,
\[
\int_{X^{*}}1_{B}\circ T_{*}^{n_{j}}1_{B}\circ T_{*}^{n_{l}}d\mu^{*}=\mu^{*}\left(B\cap T_{*}^{-\left(n_{j}-n_{l}\right)}B\right)\leq\left(1+2^{-\left(j-l\right)}\right)\mu^{*}\left(B\right)^{2}.
\]
By this, for all $N\in\mathbb{N}$, 
\begin{align*}
\int_{X^{*}}\left(\sum_{j=0}^{N-1}1_{B}\circ T_{*}^{n_{j}}\right)^{2}d\mu^{*} & =\sum_{j=0}^{N-1}\int_{X^{*}}1_{B}\circ T_{*}^{n_{j}}d\mu^{*}+2\sum_{0\leq l<j<N}\int_{X^{*}}1_{B}\circ T_{*}^{n_{j}}1_{B}\circ T_{*}^{n_{l}}d\mu^{*}\\
 & =N\mu^{*}\left(B\right)+2\sum_{0\leq l<j<N}\left(1+2^{-\left(j-l\right)}\right)\mu^{*}\left(B\right)^{2}\\
 & =N^{2}\left(\mu^{*}\left(B\right)\right)^{2}+O(N)\\
 & =\left(\int_{X^{*}}\sum_{j=0}^{N-1}1_{B}\circ T_{*}^{n_{j}}d\mu^{*}\right)^{2}+O(N).
\end{align*}
This shows that 
\[
Var\left(\frac{1}{N}\sum_{j=0}^{N-1}1_{B}\circ T_{*}^{n_{j}}\right)=O\left(\frac{1}{N}\right)\xrightarrow[N\to\infty]{}0.
\]
A classical application of Chebychev's inequality then shows that
\[
\frac{1}{N}\sum_{j=0}^{N-1}1_{B}\circ T_{*}^{n_{j}}\xrightarrow{}\mu^{*}\left(B\right),\ \text{in}\ \mu^{*}\ \text{measure.}
\]
It then follows that there exists $N_{n}\to\infty$ such that 
\[
\frac{1}{N_{n}}\sum_{j=0}^{N_{n}-1}1_{B}\circ T_{*}^{n_{j}}\xrightarrow[n\to\infty]{}\mu^{*}\left(B\right),\ \mu^{*}-\text{almost everywhere.}
\]
We have shown that for all $B\in\mathcal{A}^{*}$ we have $n_{j}\to\infty$
and $N_{n}\to\infty$ as in the conditions of Theorem \ref{thm: simple ergodicity criteria}
and therefore $T_{*}$ is ergodic. 

\end{proof}

\subsubsection{Ergodicity implies weak mixing}

For $T_{*}$ a Poisson suspension over a measure preserving transformation
$T$, it is known that if $T_{*}$ is ergodic then $T_{*}$ is weak
mixing. We show an argument which gives the weak mixing result. 

A probability preserving transformation $\left(\Omega,\mathcal{C},\nu,R\right)$
is weak mixing if $R\times R$ is ergodic. Weak mixing implies ergodicity
and there are several (standard) equivalent definitions of the weak
mixing property. Among them is the spectral condition, $R$ is weakly
mixing if and only if there are no functions $f\in L^{2}\left(\Omega,\nu\right)$
with $\int fd\nu=0$ and $\lambda\in\mathbb{C}$ with $|\lambda|=1$
which satisfy
\[
f\circ R=\lambda f.
\]

\begin{prop}
\label{prop: Aaronson}\footnote{This proposition was communicated to us by J. Aaronson.}Let
$\left(\Omega,\mathcal{C},\nu,R\right)$ be a probability preserving
transformation. If for all $f\in L^{2}\left(\Omega,\nu\right)$, there
exists $n_{j}\to\infty$ such that 
\[
\int f\circ R^{n_{j}}\bar{f}d\nu\xrightarrow[j\to\infty]{}\left|\int fd\nu\right|^{2}
\]
 as $j\to\infty$, then $R$ is weak mixing. 
\end{prop}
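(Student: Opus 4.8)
The plan is to use the spectral characterization of weak mixing recalled just above: $R$ is weak mixing if and only if there is no pair $\left(f,\lambda\right)$ with $0\neq f\in L^{2}\left(\Omega,\nu\right)$, $\int fd\nu=0$, $\lambda\in\mathbb{C}$, $\left|\lambda\right|=1$, and $f\circ R=\lambda f$ (when $\lambda=1$ this is ergodicity, i.e.\ simplicity of the eigenvalue $1$; when $\lambda\neq1$ it is the absence of a non-trivial unimodular eigenvalue). I would argue by contradiction, so suppose such a pair exists.

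Iterating the eigenvalue equation gives $f\circ R^{n}=\lambda^{n}f$ for every $n\in\mathbb{N}$. Applying the hypothesis of the proposition to this particular $f$ produces a sequence $n_{j}\to\infty$ along which
\[
\lambda^{n_{j}}\left\Vert f\right\Vert _{L^{2}}^{2}=\int\left(f\circ R^{n_{j}}\right)\bar{f}\,d\nu\xrightarrow[j\to\infty]{}\left|\int fd\nu\right|^{2}=0.
\]
Since $\left|\lambda\right|=1$, the left-hand side has modulus $\left\Vert f\right\Vert _{L^{2}}^{2}$ for every $j$; as it tends to $0$, this forces $\left\Vert f\right\Vert _{L^{2}}=0$, i.e.\ $f=0$, contradicting the choice of $f$. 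Hence no such pair exists and $R$ is weak mixing.

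The argument is short and there is no serious obstacle. The two points worth keeping in mind are that the hypothesis is applied not to a dense family but to a single, cleverly chosen function --- a putative eigenfunction --- and that the mechanism is precisely that for an eigenfunction the correlations $\int\left(f\circ R^{n}\right)\bar{f}\,d\nu$ stay on the circle of radius $\left\Vert f\right\Vert _{L^{2}}^{2}$, so that a single subsequential limit equal to $\left|\int fd\nu\right|^{2}=0$ already collapses $f$ to $0$.
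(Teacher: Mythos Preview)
Your proof is correct and is essentially identical to the paper's own argument: assume a nonzero eigenfunction $f$ with $\int f\,d\nu=0$ and eigenvalue $\lambda$ on the unit circle, apply the hypothesis to this $f$, and use $\left|\lambda^{n_j}\right|=1$ to force $\|f\|_{L^2}=0$.
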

\begin{proof}
Assume that the conditions of the proposition are satisfied and $R$
is not weak mixing. Then there exists a non constant $f\in L^{2}\left(\Omega,\nu\right)$
with $\int fd\nu=0$ and $\lambda\in\mathbb{C}$ with $|\lambda|=1$
such that 
\[
f\circ R=\lambda f.
\]
By the conditions of the proposition, there exists $n_{j}\to\infty$
such that 
\begin{align*}
0 & =\lim_{j\to\infty}\int_{\Omega}f\circ R^{n_{j}}\bar{f}d\nu\\
 & =\lim_{j\to\infty}\lambda^{n_{j}}\int_{\Omega}|f|^{2}d\nu.
\end{align*}
This can happen only if $f\equiv0$, which is a contradiction. 
\end{proof}
As $\left(X,\mathcal{B},\mu\right)$ is a standard $\sigma$-finite
measure space, there exists a countable collection of sets $\mathcal{Z}\subset\mathcal{B}_{fin}$
such that for all $A\in\mathcal{B}_{fin}$ and $\epsilon>0$, there
exists $C$ which is a finite union of sets in $\mathcal{Z}$ such
that $\mu\left(A\triangle C\right)<\epsilon$. In the case $X=\mathbb{R}$
and $\mu$ the Lebsegue measure one can take for example $\mathcal{Z}$
to be the collection of intervals with rational endpoints. Denote
by $\mathcal{F}$ the collection of finite unions of sets in $\mathcal{Z}$
and define
\[
\mathcal{A}\left(\mathcal{Z}\right)=\left\{ \bigcap_{i=1}^{L}\left[N\left(A_{i}\right)=k_{i}\right]:\ L\in\mathbb{N},\ \left\{ A_{i}\right\} _{i=1}^{L}\subset\mathcal{F},\ \left\{ k_{i}\right\} _{i=1}^{L}\subset\mathbb{N}\cup\{0\}\right\} .
\]

In what follows, the fact that $\mathcal{F}$ and hence $\mathcal{A}\left(\mathcal{Z}\right)$
are countable will useful. 
\begin{lem}
\label{lem: density of collection}The collection of simple functions
of the form $\sum_{i=1}^{L}c_{i}{\bf 1}_{A_{i}^{*}}$ with $\left\{ A_{i}^{*}\right\} _{i=1}^{L}\subset\mathcal{A}\left(\mathcal{Z}\right)$
is dense in $L^{2}\left(X^{*},\mu^{*}\right)$. 
\end{lem}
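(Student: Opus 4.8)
The plan is to show that the simple functions built from indicators of sets in $\mathcal{A}\left(\mathcal{Z}\right)$ approximate an arbitrary $g\in L^{2}\left(X^{*},\mu^{*}\right)$. First I would reduce to a generating sub-$\sigma$-algebra: the Poissonian cylinder sets $\bigcap_{i=1}^{L}\left[N\left(A_{i}\right)=k_{i}\right]$ with $A_{i}\in\mathcal{B}_{fin}$ pairwise disjoint generate $\mathcal{B}^{*}$, so finite linear combinations of their indicators are dense in $L^{2}\left(X^{*},\mu^{*}\right)$ by the standard density of simple functions over a generating algebra. Hence it suffices to approximate, in $L^{2}\left(\mu^{*}\right)$, a single indicator $\mathbf{1}_{B}$ with $B=\bigcap_{i=1}^{L}\left[N\left(A_{i}\right)=k_{i}\right]$, $A_{i}\in\mathcal{B}_{fin}$, by an indicator (or finite combination of indicators) of a set in $\mathcal{A}\left(\mathcal{Z}\right)$.

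The key estimate is a continuity bound for Poissonian cylinders in terms of the underlying sets, analogous to Lemma \ref{lem: mixing of Poisson}: if $A,A'\in\mathcal{B}_{fin}$ with $\mu\left(A\triangle A'\right)$ small, then $\left[N(A)=k\right]$ and $\left[N(A')=k\right]$ differ by little in $\mu^{*}$-measure, since $\left[N(A)=k\right]\triangle\left[N(A')=k\right]\subset\left[N\left(A\triangle A'\right)>0\right]$ and $\mu^{*}\left(N\left(A\triangle A'\right)>0\right)=1-e^{-\mu\left(A\triangle A'\right)}\leq\mu\left(A\triangle A'\right)$. Extending this to the intersection over $i=1,\dots,L$ by a union bound, if I choose $C_{i}\in\mathcal{F}$ with $\mu\left(A_{i}\triangle C_{i}\right)<\epsilon/L$ (possible by the defining property of $\mathcal{Z}$), then $B^{*}:=\bigcap_{i=1}^{L}\left[N\left(C_{i}\right)=k_{i}\right]\in\mathcal{A}\left(\mathcal{Z}\right)$ satisfies $\mu^{*}\left(B\triangle B^{*}\right)\leq\sum_{i}\mu\left(A_{i}\triangle C_{i}\right)<\epsilon$, and therefore $\left\|\mathbf{1}_{B}-\mathbf{1}_{B^{*}}\right\|_{L^{2}\left(\mu^{*}\right)}^{2}=\mu^{*}\left(B\triangle B^{*}\right)<\epsilon$. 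Combining this with the first reduction gives the claimed density.

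One technical point I would want to handle carefully: the sets $A_{i}$ in a generating cylinder can be taken pairwise disjoint, but the approximants $C_{i}\in\mathcal{F}$ need not be pairwise disjoint, which is why $\mathcal{A}\left(\mathcal{Z}\right)$ is defined without a disjointness requirement on the $C_{i}$. Since the continuity estimate above only uses the set operation $[N(\cdot)=k]$ and the bound $\mu^{*}\left(N(A\triangle A')>0\right)\leq\mu\left(A\triangle A'\right)$ — which needs no independence — disjointness is not needed for the approximation step, so this causes no difficulty; it only means the final simple functions range over all of $\mathcal{A}\left(\mathcal{Z}\right)$.

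The main obstacle, such as it is, is purely bookkeeping: verifying that finite linear combinations of Poissonian cylinder indicators are genuinely dense in $L^{2}\left(X^{*},\mu^{*}\right)$ (rather than merely in the closed span of cylinder indicators), which follows because $\mu^{*}$ is a probability measure and the cylinders form an algebra generating $\mathcal{B}^{*}$, so bounded functions measurable with respect to finitely many $N(A_{i})$ are $L^{2}$-dense and each such function is an $L^{2}$-limit of simple functions subordinate to the finite partition generated by those cylinders. Once that is in place, the continuity estimate does the rest, and there is no further difficulty.
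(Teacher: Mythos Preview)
Your proposal is correct and follows essentially the same two-step approach as the paper: first reduce to simple functions built from general Poissonian cylinders in $\mathcal{A}^{*}$, then approximate each such cylinder by one in $\mathcal{A}(\mathcal{Z})$ via $\mu$-approximation of the underlying sets. You supply more detail than the paper at the second step---the paper simply asserts $\mu^{*}(A^{*}\triangle B_{n}^{*})\to 0$ without isolating the inclusion $[N(A)=k]\triangle[N(A')=k]\subset[N(A\triangle A')>0]$ and the Poisson bound $1-e^{-\mu(A\triangle A')}\le\mu(A\triangle A')$---and you correctly flag that the approximants $C_{i}$ need not be pairwise disjoint, which is why $\mathcal{A}(\mathcal{Z})$ drops that requirement.
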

\begin{proof}
Firstly, the collection of simple functions with $\left\{ A_{i}^{*}\right\} _{i=1}^{L}\subset\mathcal{A}^{*}$
is dense in $L^{2}\left(X^{*},\mu^{*}\right)$. 

Secondly, for any $\left\{ A_{i}\right\} _{i=1}^{N}\subset\mathcal{B}_{fin}$
there exists an array of sets $\left\{ B_{n,i}:\ n,i\in\mathbb{N},1\leq i\leq N\right\} $
such that 
\[
\max_{1\leq i\leq N}\mu\left(A_{i}\triangle B_{n,i}\right)\xrightarrow[n\to\infty]{}0.
\]
By this, for any $A^{*}=\bigcap_{i=1}^{N}\left[N\left(A_{i}\right)=k_{i}\right]\in\mathcal{A}^{*}$
writing $B_{n}^{*}=\bigcap_{i=1}^{N}\left[N\left(B_{n,i}\right)=k_{i}\right]$,
one has 
\[
\left|1_{A^{*}}-1_{B_{n}^{*}}\right|_{2}=\mu\left(A^{*}\triangle B_{n}^{*}\right)\xrightarrow[n\to\infty]{}0.
\]
The combination of these two observations proves the claim. 
\end{proof}
\begin{lem}
Let $\left(X,\mathcal{B},m,T\right)$ be a measure preserving transformation
with $m(X)=\infty$ and no absolutely continuous probability measure.
For any countable collection of sets $\mathcal{V}\subset\mathcal{B}_{fin}$,
there exists $n_{j}\to\infty$ such that for all $A,B\in\mathcal{V}$
\begin{equation}
m\left(A\cap T^{-n_{j}}B\right)\xrightarrow[j\to\infty]{}0.\label{eq: lim 0}
\end{equation}
\end{lem}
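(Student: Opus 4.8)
The plan is to follow the proof of Lemma \ref{lem: Null part}, which is in fact simpler here because we only need to control $m\!\left(A\cap T^{-n_j}B\right)$ itself rather than the differences $m\!\left(A\cap T^{-(n_j-n_l)}B\right)$; the price is a diagonal argument over the (countably many) pairs of sets from $\mathcal{V}$. So first I would fix an enumeration $\mathcal{V}=\{V_1,V_2,\dots\}$.

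The starting point is the same averaged ergodic theorem input used in Lemma \ref{lem: Null part}: since $T$ is $m$-measure preserving and has no absolutely continuous invariant probability measure, the pointwise ergodic theorem gives $\frac1n S_n\!\left(1_{V_j}\right)\to 0$ $m$-a.e.\ for every $j$, and since $m(V_i)<\infty$ the dominated convergence theorem (with dominating function $1_{V_i}$) then yields, for every $i,j$,
\[
\frac1n\sum_{k=0}^{n-1}m\!\left(V_i\cap T^{-k}V_j\right)=\int_{V_i}\frac1n S_n\!\left(1_{V_j}\right)\,dm\xrightarrow[n\to\infty]{}0 .
\]
If one prefers a self-contained argument for this vanishing, it is available via weak-$*$ compactness: any subsequential limit — taken along a countable $T^{-1}$-invariant generating algebra and a diagonal subsequence — of the finite set functions $E\mapsto\frac1n\sum_{k=0}^{n-1}m\!\left(V_i\cap T^{-k}E\right)$ is $T$-invariant, dominated by $m$, and of total mass $m(V_i)\in(0,\infty)$, so were some Cesàro average above not to tend to $0$ this limit would, after normalization, be an absolutely continuous invariant probability measure, contradicting the hypothesis.

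Given the displayed vanishing, the elementary fact recalled just before the proof of Lemma \ref{lem: Null part} shows that for each $\epsilon>0$ and each $i,j$ the set $\left\{n\in\mathbb{N}:\ m\!\left(V_i\cap T^{-n}V_j\right)<\epsilon\right\}$ has full Banach density, and a finite intersection of such sets still has full Banach density, hence is unbounded. I would then build $n_1<n_2<\dots$ inductively: having chosen $n_1,\dots,n_{l-1}$, pick $n_l>n_{l-1}$ inside the (unbounded) set $\bigcap_{1\le i,j\le l}\left\{n:\ m\!\left(V_i\cap T^{-n}V_j\right)<1/l\right\}$. Then for any fixed $i,j$ and all $l\ge\max(i,j)$ one has $m\!\left(V_i\cap T^{-n_l}V_j\right)<1/l$, which gives $m\!\left(A\cap T^{-n_l}B\right)\to 0$ for all $A,B\in\mathcal{V}$, i.e.\ \eqref{eq: lim 0}. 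The only substantive point is the Cesàro vanishing of the previous paragraph — exactly the ingredient already invoked in Lemma \ref{lem: Null part} — and I do not expect it, nor the routine diagonal bookkeeping, to present a real obstacle.
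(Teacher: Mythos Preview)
Your proposal is correct and follows essentially the same route as the paper's own proof: the Ces\`aro vanishing $\frac1n\sum_{k=0}^{n-1}m(V_i\cap T^{-k}V_j)\to 0$ via the pointwise ergodic theorem and dominated convergence, full Banach density of the sets $\{n:\max_{i,j\le L}m(V_i\cap T^{-n}V_j)<\epsilon\}$, and an inductive choice of $n_l$ in the set corresponding to $L=l$, $\epsilon=1/l$ (the paper uses $2^{-l}$). The optional weak-$*$ compactness argument you sketch is not in the paper but is a reasonable alternative for the same Ces\`aro input.
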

\begin{proof}
Let $\left\{ A_{i}\right\} _{i=1}^{\infty}$ be an enumeration of
the sets in $\mathcal{V}$. We will construct $n_{j}$ as follows.
As $T$ has no a.c.i.p. and $\left\{ 1_{A_{i}}\right\} _{i=1}^{\infty}\subset L^{1}\left(X,\mathcal{B},m\right)$,
for all $1\leq i,j<\infty$,
\[
\frac{1}{n}\sum_{i=1}^{n}m\left(A_{i}\cap T^{-n}A_{j}\right)\xrightarrow[n\to\infty]{}0.
\]
Consequently for all $\epsilon>0$ and $L\in\mathbb{N}$, the set
\[
D(L,\epsilon)=\left\{ n\in\mathbb{N}:\ \max_{1\leq i,j\leq L}\left(m\left(A_{i}\cap T^{-n}A_{j}\right)\right)<\epsilon\right\} 
\]
 is of Banach density $1$ since it is an intersection of $2^{L}$
elements of full density. A simple inductive construction gives an
increasing subsequence $n_{j}\to\infty$ such that $n_{j}\in D\left(j,2^{-j}\right)$.
The lemma is proven. 
\end{proof}
\begin{thm}
If $\left(X,\mathcal{B},\mu,T\right)$ be a $\sigma$-finite measure
preserving system with no absolutely continuous invariant probability
measure, then $T_{*}$ is weak mixing.
\end{thm}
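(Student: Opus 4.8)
The plan is to derive weak mixing of $T_*$ from Proposition \ref{prop: Aaronson}, and moreover to produce a \emph{single} sequence $n_j\to\infty$ that verifies its hypothesis for every $f\in L^2\left(X^*,\mu^*\right)$ at once. First I would note that $\mu(X)=\infty$: a finite invariant measure would, after normalisation, be an absolutely continuous invariant probability measure, contrary to assumption. Since $\mathcal{F}$ (the collection of finite unions of sets in $\mathcal{Z}$) is countable, the lemma just above the theorem then supplies an increasing sequence $n_j\to\infty$ along which
\[
\mu\left(A\cap T^{-n_j}B\right)\xrightarrow[j\to\infty]{}0\qquad\text{for all }A,B\in\mathcal{F}.
\]

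Next I would check a correlation estimate on the dense class provided by Lemma \ref{lem: density of collection}. For cylinder sets $A^*,C^*\in\mathcal{A}\left(\mathcal{Z}\right)$ the set $T_*^{-n_j}A^*$ is again such a cylinder set, with $\mathbf{S}\left(T_*^{-n_j}A^*\right)=T^{-n_j}\mathbf{S}\left(A^*\right)$, and both $\mathbf{S}\left(A^*\right)$ and $\mathbf{S}\left(C^*\right)$ lie in $\mathcal{F}$ by construction of $\mathcal{A}\left(\mathcal{Z}\right)$. Hence Lemma \ref{lem: mixing of Poisson} together with the $\mu^*$-invariance of $T_*$ gives
\[
\left|\mu^*\left(T_*^{-n_j}A^*\cap C^*\right)-\mu^*\left(A^*\right)\mu^*\left(C^*\right)\right|\leq 2\mu\left(T^{-n_j}\mathbf{S}\left(A^*\right)\cap\mathbf{S}\left(C^*\right)\right)\xrightarrow[j\to\infty]{}0.
\]
Expanding a simple function $f=\sum_{i=1}^{L}c_i\mathbf{1}_{A_i^*}$ with each $A_i^*\in\mathcal{A}\left(\mathcal{Z}\right)$ into its double sum, this forces $\int_{X^*}f\circ T_*^{n_j}\,\bar f\,d\mu^*\to\left|\int_{X^*}f\,d\mu^*\right|^2$.

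Finally I would remove the restriction on $f$ by a routine approximation. Given $f\in L^2\left(X^*,\mu^*\right)$ and $\varepsilon>0$, pick a simple $g$ of the above form with $\left\|f-g\right\|_2<\varepsilon$ (Lemma \ref{lem: density of collection}); writing $f=g+(f-g)$, expanding $\int_{X^*}f\circ T_*^{n_j}\bar f\,d\mu^*$ into four terms, and bounding the three terms involving $f-g$ by Cauchy--Schwarz and $\left\|h\circ T_*^{n_j}\right\|_2=\left\|h\right\|_2$, one gets $\limsup_{j}\bigl|\int_{X^*}f\circ T_*^{n_j}\bar f\,d\mu^*-\left|\int_{X^*}g\,d\mu^*\right|^2\bigr|\leq 2\varepsilon\left\|g\right\|_2+\varepsilon^2$; since $\mu^*$ is a probability measure, $\left|\int_{X^*}g\,d\mu^*-\int_{X^*}f\,d\mu^*\right|\leq\varepsilon$, so letting $\varepsilon\to0$ yields $\int_{X^*}f\circ T_*^{n_j}\bar f\,d\mu^*\to\left|\int_{X^*}f\,d\mu^*\right|^2$ for every $f$, and Proposition \ref{prop: Aaronson} concludes the proof. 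I do not expect a genuine obstacle; the one point that actually matters is insisting on a single subsequence $n_j$ serving the whole countable family $\mathcal{A}\left(\mathcal{Z}\right)$ simultaneously — this is precisely why the separability bookkeeping ($\mathcal{Z}$, $\mathcal{F}$, $\mathcal{A}\left(\mathcal{Z}\right)$) was arranged beforehand, and why it is essential that every base set $\mathbf{S}\left(A^*\right)$ stays inside the countable algebra $\mathcal{F}$.
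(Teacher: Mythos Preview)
Your proposal is correct and follows essentially the same route as the paper: extract a single subsequence $n_j$ working for the countable family $\mathcal{F}$ via the preceding lemma, feed this into Lemma \ref{lem: mixing of Poisson} to get asymptotic independence on $\mathcal{A}(\mathcal{Z})$, then pass to all of $L^2$ by Lemma \ref{lem: density of collection} and a standard approximation, and conclude with Proposition \ref{prop: Aaronson}. Your explicit observation that $\mu(X)=\infty$ (needed to invoke the lemma) is a detail the paper leaves implicit, and your restriction to the diagonal $\int f\circ T_*^{n_j}\,\bar f\,d\mu^*$ rather than the bilinear form is harmless since that is all Proposition \ref{prop: Aaronson} requires.
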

\begin{proof}
Let $\mathcal{Z\subset B}_{fin},\mathcal{A}\left(\mathcal{Z}\right)\subset\mathcal{B}^{*}$
and $\mathcal{F}$ be as above and $n_{j}\to\infty$ such that for
all $A,B\in\mathcal{F}$\footnote{Recall that $\mathcal{F}$ is countable.},
\begin{equation}
m\left(A\cap T^{-n_{j}}B\right)\xrightarrow[j\to\infty]{}0.\label{eq: lim0}
\end{equation}
 First we show that for all $C,D\in\mathcal{A}\left(\mathcal{Z}\right)$,
\[
\int_{X^{*}}1_{C}\left(1_{D}\circ T_{*}^{n_{j}}\right)dm^{*}\xrightarrow[j\to\infty]{}m^{*}\left(C\right)m^{*}\left(D\right).
\]

Indeed, any $C,D\in\mathcal{A}\left(Z\right)$ are of the form $C=\bigcap_{i=1}^{L}\left[N\left(A_{i}\right)=k_{i}\right]$
and $D=\bigcap_{i=L+1}^{L+M}\left[N\left(A_{i}\right)=k_{i}\right]$
with $L,M\in\mathbb{N}$, $\left\{ k_{i}\right\} _{i=1}^{L+M}\subset\{0\}\cup\mathbb{N}$
and $\left\{ A_{i}\right\} _{i=1}^{L+M}\subset\mathcal{F}$. As $\mathcal{F}$
is closed under finite unions, ${\bf A}=\bigcup_{i=1}^{L+M}A_{i}\in\mathcal{F}$,
thus 
\[
m\left({\bf A}\cap T^{-n_{j}}{\bf A}\right)\xrightarrow[j\to\infty]{}0.
\]
 By Lemma \ref{lem: mixing of Poisson}
\[
\left|\int_{X^{*}}1_{C}\left(1_{D}\circ T_{*}^{n_{j}}\right)dm^{*}-m^{*}\left(C\right)m^{*}\left(D\right)\right|\xrightarrow[j\to\infty]{}0.
\]
 Consequently, by Lemma \ref{lem: density of collection} and standard
approximation arguments, it then follows that for all $F,G\in L^{2}\left(X^{*},m^{*}\right)$,
\[
\int_{X^{*}}F\left(G\circ T_{*}^{n_{j}}\right)dm^{*}\xrightarrow[j\to\infty]{}\left(\int_{X^{*}}Fdm^{*}\right)\left(\int_{X^{*}}Gdm^{*}\right).
\]
By Proposition \ref{prop: Aaronson}, $T_{*}$ is weak mixing. 
\end{proof}
\begin{rem}
Emmanuel Roy has pointed out to us that in the case of Poisson suspensions
weak mixing follows from ergodicity by the following argument. Given
a measure preserving $T:\left(X,\mathcal{B},\mu\right)\to\left(X,\mathcal{B},\mu\right)$
let $S=T\times Id:X\times\{0,1\}\to X\times\{0,1\}$ be a two point
extension of $T$ which preserves $\mu\times\left(\frac{1}{2}\left(\delta_{0}+\delta_{1}\right)\right)$.
Then $S_{*}$ is isomorphic to $T_{*}\times T_{*}$. Also $T$ has
no a.c.i.p. if and only if $S$ has no a.c.i.p. thus ergodicity of
$T_{*}$ implies ergodicity of $S_{*}\cong T_{*}\times T_{*}$. 
\end{rem}

\subsection{Original motivation for the statement of Theorem \ref{thm: simple ergodicity criteria}. }

A set $W$ in $\left(X,\mathcal{B},m\right)$ is weakly wandering
for $T$ if there exists $n_{j}\to\infty$ such that $\left\{ T^{-n_{j}}W\right\} $
are pairwise disjoint. If there exists no a.c.i.p., then $X$ is a
countable disjoint union of weakly wandering sets $\bigcup_{j\in\mathbb{Z}}T^{-n_{j}}W$,
see \cite{Aar97InfErg,HajKakIt72invmeas} for discussion on weakly
wandering sets. As for all weakly wandering set $W$ and $A\in\mathcal{B}$,
$A\cap W$ is weakly wandering, this implies that every finite measure
set can be approximated from within by a finite union of weakly wandering
sets. Here given $W$ weakly wandering with respect to $n_{j}\to\infty$
of positive and finite $\mu$ measure and $k\in\mathbb{N}$, the sequence
\[
Y_{j}:=1_{\left[N(W)=k\right]}\circ T_{*}^{n_{j}}
\]
is a sequence of i.i.d. integrable random variables. By the strong
law of large numbers, 
\[
\frac{1}{n}\sum_{j=0}^{n-1}1_{\left[N(W)=k\right]}\circ T_{*}^{n_{j}}\xrightarrow[n\to\infty]{}\mu^{*}\left(N(W)=k\right),\ \ \mu^{*}-a.s.
\]
Our first attempt was to use this to show the conditions of Theorem
\ref{thm: simple ergodicity criteria}. The problem for doing this
lies in the following: Given $W_{1},W_{2},..,W_{N}$ pairwise disjoint
weakly wandering sets, does there exists $n_{j}\to\infty$ such that
$\bigcup_{i=1}^{N}W_{i}$ is weakly wandering along $n_{j}$? 

\section{Proof of theorem \ref{thm: nonsingular erg}}

\subsection{Some relevant material from non-singular ergodic theory}

This subsection contains several classical statements and definitions
from non-singular ergodic theory. The reader is referred to \cite{Aar97InfErg}
where the statements and their proofs are written. Let $\left(X,\mathcal{B},\mu\right)$
be a standard probability space and $T:X\to X$ a measurable and invertible
transformation such that $\mu\circ T$ and $\mu$ have the same collection
of null sets ($\mu$ and $\mu\circ T$ are equivalent measures). Let
$\hat{T}:L^{1}\left(X,\mathcal{B},\mu\right)\to L^{1}\left(X,\mathcal{B},\mu\right)$
be the dual operator of $T$ defined by 
\[
\int_{X}f\cdot g\circ Td\mu=\int_{X}\left(\hat{T}f\right)gd\mu,
\]
 for all $g\in L^{\infty}\left(X,\mathcal{B},\mu\right)$ and $f\in L^{1}\left(X,\mathcal{B},\mu\right)$
. In our case, as $T$ is invertible, for all $n\in\mathbb{Z}$, 
\[
\hat{T}^{n}\left(f\right)(x)=\frac{d\left(\mu\circ T^{-n}\right)}{d\mu}(x)f\circ T^{-n}(x).
\]
A set $W\in\mathcal{B}$ is wandering if $\left\{ T^{n}W\right\} _{n\in\mathbb{Z}}$
are pairwise disjoint. The measurable union of all wandering sets,
denoted by $\mathfrak{D}(T)$, is called the dissipative part of $T$.
Its complement $\mathfrak{C}(T)=X\backslash\mathfrak{D}\left(T\right)$
is called the conservative part of $T$. The decomposition $X=\mathfrak{D}(T)\uplus\mathfrak{C}(T)$
is called the Hopf decomposition of $T$. The map $T$ is conservative
if there exists no wandering set $W$ of positive $\mu$ measure,
or equivalently $\mathfrak{C}(T)=X{\rm mod\mu}$. An equivalent definition
is that $T$ satisfies the conclusion of the Poincare recurrence theorem,
in the sense that for all $A\in\mathcal{B}$ with $\mu(A)>0$, for
almost every $x\in A$
\[
\sum_{k=1}^{\infty}1_{A}\circ T^{k}(x)=\infty.
\]
The conservative part, modulo a null set, is equal to
\[
\mathfrak{C}\left(T\right)=\left\{ x\in X:\ \sum_{k=1}^{\infty}\frac{d\left(\mu\circ T^{-n}\right)}{d\mu}(x)=\infty\right\} {\rm mod}\mu
\]
and $T$ is conservative if and only if
\[
\sum_{k=1}^{\infty}\frac{d\left(\mu\circ T^{-n}\right)}{d\mu}(x)=\infty,\ \mu-a.e.
\]
To shorten notation we will write ${\bf 1}$ for the constant function
${\bf 1}(x)=1$ and 
\[
\hat{T}^{n}{\bf 1(x)=}\frac{d\left(\mu\circ T^{-n}\right)}{d\mu}(x).
\]
Finally, by the Hurewicz ergodic theorem for all $A\in\mathcal{B}$
with $\mu\left(A\right)>0$, for $\mu$ almost every $x\in X$,
\[
\frac{\sum_{k=0}^{n-1}\hat{T}^{n}1_{A}(x)}{\sum_{k=1}^{\infty}\hat{T}^{n}{\bf 1}(x)}=\frac{\sum_{k=0}^{n-1}\hat{T}^{n}1_{A}(x)}{\sum_{k=1}^{\infty}\left(T^{-n}\right)'(x)}\xrightarrow[n\to\infty]{}h\left(1_{A},{\bf 1}\right)(x)
\]
where $h=h\left(A\right)\in L^{1}\left(X,\mathcal{B},\mu\right)$
satisfies:
\begin{itemize}
\item $h\circ T=h$ and $h\geq0$.
\item $\int_{X}h\psi d\mu=\int_{X}\psi1_{A}d\mu$ for all $\psi\in L^{\infty}\left(X,\mu\right)$
satisfying $\psi\circ T=\psi$. Consequently, the set $\left\{ x\in X:\ h(x)>0\right\} $
is of positive $\mu$ measure. 
\end{itemize}
The two bullets simply say that $h=\mathbb{E}\left(\left.1_{A}\right|\mathcal{I}\right)$,
where $\mathcal{I}$ is the $\sigma$-algebra of $T$ invariant sets.
One way of proving the Hurewicz ergodic theorem goes through the following
special case of the maximal inequality. Write $\hat{T}_{n}(f)=\sum_{k=0}^{n-1}\hat{T}^{k}f$. 
\begin{thm*}
Let $\left(X,\mathcal{B},\mu\right)$ be a conservative non-singular
transformation. Then for all $f\in L^{1}\left(X,\mathcal{B},\mu\right)$
and $t>0$, 
\[
\mu\left(x\in X:\ \sup_{n\in\mathbb{N}}\left|\frac{\hat{T}_{n}(f)}{\hat{T}_{n}\left({\bf 1}\right)}\right|>t\right)\leq\frac{\left\Vert f\right\Vert _{1}}{t}.
\]
\end{thm*}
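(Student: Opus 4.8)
The plan is to obtain this as a consequence of the Hopf maximal ergodic theorem for positive $L^{1}$-contractions, applied to the dual operator $\hat{T}$. Note first that $\hat{T}$ is a positive operator: from $\hat{T}^{n}f=\frac{d(\mu\circ T^{-n})}{d\mu}\,f\circ T^{-n}$ and the non-negativity of Radon--Nikodym derivatives, $f\ge0$ forces $\hat{T}^{n}f\ge0$; moreover $\int_{X}\hat{T}f\,d\mu=\int_{X}f\,d\mu$ (take $g={\bf 1}$ in the defining identity of $\hat{T}$), whence $\|\hat{T}f\|_{1}\le\int_{X}\hat{T}|f|\,d\mu=\|f\|_{1}$ and $\hat{T}$ is an $L^{1}$-contraction. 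The Hopf maximal ergodic theorem (see \cite{Aar97InfErg}) then asserts that for every $g\in L^{1}\left(X,\mathcal{B},\mu\right)$,
\[
\int_{\left\{ x\in X:\ \sup_{n\ge1}\hat{T}_{n}(g)(x)>0\right\} }g\,d\mu\ \ge\ 0 .
\]

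The first step is a convenient reduction. By positivity of $\hat{T}$ one has $|\hat{T}_{n}(f)|\le\hat{T}_{n}(|f|)$ pointwise, and $\hat{T}_{n}({\bf 1})=\sum_{k=0}^{n-1}\hat{T}^{k}{\bf 1}\ge\hat{T}^{0}{\bf 1}={\bf 1}>0$, so
\[
\left\{ x:\ \sup_{n}\frac{|\hat{T}_{n}(f)(x)|}{\hat{T}_{n}({\bf 1})(x)}>t\right\} \ \subseteq\ \left\{ x:\ \sup_{n}\frac{\hat{T}_{n}(|f|)(x)}{\hat{T}_{n}({\bf 1})(x)}>t\right\} =:E_{t} .
\]
(Conservativity guarantees in addition that $\hat{T}_{n}({\bf 1})\to\infty$, which matters for the Hurewicz ratio theorem but not for the present inequality, where only positivity of the denominator is used.) By linearity of $\hat{T}_{n}$ and positivity of the denominator, $\hat{T}_{n}(|f|)/\hat{T}_{n}({\bf 1})>t$ is equivalent to $\hat{T}_{n}(|f|-t{\bf 1})>0$, hence $E_{t}=\{x:\ \sup_{n\ge1}\hat{T}_{n}(|f|-t{\bf 1})(x)>0\}$; here $|f|-t{\bf 1}\in L^{1}$ precisely because $\mu$ is a probability measure.

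The second step is to apply the Hopf maximal ergodic theorem to $g=|f|-t{\bf 1}$. This gives $\int_{E_{t}}(|f|-t{\bf 1})\,d\mu\ge0$, that is $t\,\mu(E_{t})\le\int_{E_{t}}|f|\,d\mu\le\|f\|_{1}$, and combining with the inclusion above yields the claimed bound $\mu\big(\sup_{n}|\hat{T}_{n}(f)/\hat{T}_{n}({\bf 1})|>t\big)\le\|f\|_{1}/t$.

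It remains to justify the Hopf maximal ergodic theorem; since it is classical I would only recall Garsia's argument. Fix $N$, set $M_{N}g=\max_{1\le n\le N}\hat{T}_{n}(g)$ (with $\hat{T}_{0}(g)=0$) and $A_{N}=\{M_{N}g>0\}$. For $1\le n\le N$, $\hat{T}_{n}(g)=g+\hat{T}\big(\hat{T}_{n-1}(g)\big)\le g+\hat{T}\big((M_{N}g)^{+}\big)$ by positivity and $\hat{T}_{n-1}(g)\le(M_{N}g)^{+}$; taking the maximum over $n$ gives $M_{N}g\le g+\hat{T}\big((M_{N}g)^{+}\big)$, so on $A_{N}$, where $M_{N}g=(M_{N}g)^{+}$, we get $g\ge(M_{N}g)^{+}-\hat{T}\big((M_{N}g)^{+}\big)$. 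Integrating over $A_{N}$ and using that $(M_{N}g)^{+}$ vanishes off $A_{N}$ together with $\int_{X}\hat{T}\big((M_{N}g)^{+}\big)\,d\mu=\int_{X}(M_{N}g)^{+}\,d\mu$ gives $\int_{A_{N}}g\,d\mu\ge0$; letting $N\to\infty$ and using dominated convergence (dominating function $|g|$) completes the argument. I do not expect a genuine obstacle: this is simply the standard transfer of the Hopf method to the dual operator, the only slightly delicate points being the $n=1$ boundary term and the limit $N\to\infty$ in Garsia's lemma, and the observation that $t{\bf 1}$ lies in $L^{1}$ only because $\mu$ is finite.
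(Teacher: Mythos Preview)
Your argument is correct. In the paper this maximal inequality is not proved at all: it is quoted as a classical special case of the maximal ergodic theorem, with the reader referred to \cite{Aar97InfErg} for the proof. What you wrote is exactly the standard route one finds there --- reduce to nonnegative $f$, rewrite the ratio event as $\{\sup_n \hat T_n(|f|-t{\bf 1})>0\}$ (using $\mu(X)=1$ so that $t{\bf 1}\in L^1$), and apply Hopf's maximal lemma for the positive $L^1$-contraction $\hat T$ via Garsia's proof. Your observation that conservativity is not actually used in the inequality itself is also correct; it is stated in the paper only because the inequality is being recalled as the key step toward the Hurewicz ratio ergodic theorem, where conservativity \emph{is} needed.
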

The proof of Theorem \ref{thm: ergodicity of Bernoulli shifts} is
done by showing that for all $A\in\mathcal{B}$ with $\mu(A)>0$,
\[
\lim_{n\to\infty}\hat{T}_{n}\left(1_{A}\right)=\infty,\ \ \mu-a.e.
\]
This is equivalent to ergodicity by \cite[Proposition 1.3.2.]{Aar97InfErg}
and the fact that for a random variable $G:X\to[0,\infty]$, if for
all $A\in\mathcal{B}$ with $\mu\left(A\right)>0$, 
\[
\int_{A}Gd\mu=\infty,
\]
 then $G=\infty$ $\mu$ almost surely.

\subsection{Proof of Theorem \ref{thm: nonsingular erg}}

In this section, $F=\{1,...,N\}$, $X\subset F^{\mathbb{Z}}$ is a
subshift, $\mu$ is a probability measure supported on $X$ and $T$
denotes the shift on $F^{\mathbb{Z}}$. We assume that the measurable
equivalence relation
\[
\mathcal{T}=\left\{ \left(x,y\right)\in X\times X:\ \exists n\in\mathbb{N},\ x|_{\mathbb{Z}\backslash[-n,n]}=y|_{\mathbb{Z}\backslash[-n,n]}\right\} 
\]
is ergodic\footnote{Recall that this means that for all $A\in\mathcal{B}$, $\mu\left(\mathcal{T}\left(A\right)\right)=0$
or $\mu\left(X\backslash\mathcal{T}(A)\right)=0$}. In this section, $\mathcal{A}\subset\mathcal{B}$ denotes the collection
of finite union of cylinder sets in $\mathcal{B}$ where a cylinder
set is denoted by 
\[
\left[b\right]_{k}^{l}=\left\{ x\in F^{\mathbb{Z}}:\ \forall i\in[k,l]\cap\mathbb{Z},\ x_{i}=b_{i}\right\} ,
\]
where $b\in F^{\mathbb{Z}}$, $k,l\in\mathbb{Z}$.
\begin{prop}
Let $\left(X,\mathcal{B},\mu,T\right)$ be a conservative, non-singular
subshift which is double tail trivial. If there exists $L:\mathcal{T}\to\left(0,\infty\right)$
such that for $\mu\times\mu$ almost all $(x,y)\in\mathcal{T}$, for
all $n\in\mathbb{N}$, 
\begin{equation}
L(x,y)^{-1}\frac{d\left(\mu\circ T^{-n}\right)}{d\mu}(y)\leq\frac{d\left(\mu\circ T^{-n}\right)}{d\mu}(x)\leq L(x,y)\frac{d\left(\mu\circ T^{-n}\right)}{d\mu}(y)\label{eq: compare}
\end{equation}
then

(i) $T$ is either conservative or dissipative (either $\mathfrak{C}\left(T\right)=X\mod\mu$
or $\mathfrak{D}\left(T\right)=X\mod\mu$. 

(ii) If $T$ is conservative, then for all $A\in\mathcal{A}$, for
$\mu\times\mu$ almost all $(x,y)\in\mathcal{T}$, 
\[
\hat{T}_{n}\left(1_{A}\right)(x)\lesssim L(x,y)\hat{T}_{n}\left(1_{A}\right)(y).
\]

(iii) If $T$ is conservative, then for all $A\in\mathcal{A},$
\[
\lim_{n\to\infty}\hat{T}_{n}\left(1_{A}\right)(x)=\infty,\ \ \mu-a.e.
\]
\end{prop}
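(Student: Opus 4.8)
The plan is to prove the three assertions in their order of logical dependence: (i) first, then the pointwise inequality underlying both (ii) and (iii), then (iii), and finally (ii). For (i), the engine is the characterization $\mathfrak{C}(T)=\{x:\sum_{k\ge1}\hat T^{k}{\bf 1}(x)=\infty\}$ recalled above together with \eqref{eq: compare}. Since $L$ takes values in $(0,\infty)$, for $\mu\times\mu$-a.e. $(x,y)\in\mathcal{T}$ the series $\sum_{k}\hat T^{k}{\bf 1}(x)$ and $\sum_{k}\hat T^{k}{\bf 1}(y)$ converge or diverge together, i.e. ${\bf 1}_{\mathfrak{C}(T)}(x)={\bf 1}_{\mathfrak{C}(T)}(y)$ for a.e. such pair. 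A Fubini argument along the (nonsingular, countable) equivalence relation $\mathcal{T}$ then shows $\mathcal{T}(\mathfrak{C}(T))=\mathfrak{C}(T)$ modulo $\mu$, so double tail triviality applied to $\mathfrak{C}(T)$ forces $\mu(\mathfrak{C}(T))=0$ or $\mu(X\backslash\mathfrak{C}(T))=0$, which is (i).

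From now on assume $T$ conservative and fix $A\in\mathcal{A}$; choose $M$ so that $A$ constrains only coordinates in $[-M,M]$. The key step is that for $\mu\times\mu$-a.e. $(x,y)\in\mathcal{T}$ there is a finite constant $C_{x,y}$ with
\[
\hat T_{n}({\bf 1}_{A})(x)\le C_{x,y}+L(x,y)\,\hat T_{n}({\bf 1}_{A})(y)\qquad\text{for all }n.
\]
To see this, work at a pair $(x,y)$ where \eqref{eq: compare} holds and $x|_{\mathbb{Z}\backslash[-n_{0},n_{0}]}=y|_{\mathbb{Z}\backslash[-n_{0},n_{0}]}$ for some $n_{0}\in\mathbb{N}$. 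Shifting indices, $T^{-k}x$ and $T^{-k}y$ agree on the window $[-M,M]$ whenever $k>M+n_{0}$, so ${\bf 1}_{A}(T^{-k}x)={\bf 1}_{A}(T^{-k}y)$ for such $k$; combined with $\hat T^{k}{\bf 1}(x)\le L(x,y)\hat T^{k}{\bf 1}(y)$ from \eqref{eq: compare} this gives $\hat T^{k}{\bf 1}_{A}(x)\le L(x,y)\hat T^{k}{\bf 1}_{A}(y)$ for $k>M+n_{0}$. Summing over $k<n$ and bounding the finitely many terms with $k\le M+n_{0}$ by $C_{x,y}:=\sum_{k=0}^{M+n_{0}}\hat T^{k}{\bf 1}(x){\bf 1}_{A}(T^{-k}x)<\infty$ yields the displayed inequality.

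Now let $E=\{x:\hat T_{n}({\bf 1}_{A})(x)\to\infty\}$. By conservativity $\hat T_{n}({\bf 1})\to\infty$ a.e., and by the Hurewicz theorem $\hat T_{n}({\bf 1}_{A})/\hat T_{n}({\bf 1})\to h(A)=\mathbb{E}({\bf 1}_{A}\mid\mathcal{I})$ a.e., so $E\supseteq\{h(A)>0\}$ modulo $\mu$ and hence $\mu(E)\ge\mu(\{h(A)>0\})>0$ since $\int h(A)\,d\mu=\mu(A)>0$. Reading the displayed inequality as the lower bound $\hat T_{n}({\bf 1}_{A})(y)\ge L(x,y)^{-1}(\hat T_{n}({\bf 1}_{A})(x)-C_{x,y})$ shows that for $\mu\times\mu$-a.e. $(x,y)\in\mathcal{T}$, if $x\in E$ then $y\in E$; by symmetry of $\mathcal{T}$ and the same Fubini argument as in (i), $E$ is $\mathcal{T}$-saturated modulo $\mu$. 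Double tail triviality then gives $\mu(X\backslash E)=0$, i.e. $\hat T_{n}({\bf 1}_{A})\to\infty$ $\mu$-a.e., which is (iii). Finally, with $\hat T_{n}({\bf 1}_{A})(y)\to\infty$ now known for $\mu$-a.e. $y$, dividing the displayed inequality by $L(x,y)\hat T_{n}({\bf 1}_{A})(y)$ and letting $n\to\infty$ gives $\hat T_{n}({\bf 1}_{A})(x)\lesssim L(x,y)\hat T_{n}({\bf 1}_{A})(y)$ for $\mu\times\mu$-a.e. $(x,y)\in\mathcal{T}$, proving (ii).

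The main obstacle I anticipate is the measure-theoretic bookkeeping behind the phrase ``$\mu\times\mu$-almost all $(x,y)\in\mathcal{T}$'': one must fix the natural $\sigma$-finite measure on the countable Borel equivalence relation $\mathcal{T}$ (the left fibred measure $E\mapsto\int\#\{y:(x,y)\in E\}\,d\mu(x)$), observe that $\mu$-nonsingularity of $T$ makes it equivalent to the corresponding right fibred measure, and use this equivalence to pass from pair-a.e. statements to the saturation statements ``$\mathcal{T}(\mathfrak{C}(T))=\mathfrak{C}(T)\bmod\mu$'' and ``$\mathcal{T}(E)=E\bmod\mu$'' that are then fed into double tail triviality. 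The combinatorial input (cylinders eventually agreeing along the backward orbit) and the summations are routine once this framework is in place.
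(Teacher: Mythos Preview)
Your proof is correct and follows essentially the same route as the paper: use \eqref{eq: compare} to show that $\mathfrak{C}(T)$ and the set $\{x:\hat T_{n}(1_{A})(x)\to\infty\}$ are $\mathcal{T}$-saturated, invoke the Hurewicz theorem to see the latter has positive measure, and conclude by double tail triviality, with (ii) read off from the comparison once (iii) is known. The only cosmetic difference is that you keep the finitely many initial terms as an explicit additive constant $C_{x,y}$, whereas the paper absorbs them by first assuming divergence at $x$ and arguing asymptotically; and the measure-theoretic worry you flag about the fibred measure on $\mathcal{T}$ is handled in the paper exactly as you suggest (it simply passes to a full-measure set $\tilde{\mathcal{T}}\subset\mathcal{T}$ on which \eqref{eq: compare} holds and uses that saturates of null sets are null).
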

\begin{proof}
Write $\tilde{\mathcal{T}}\subset\mathcal{T}$ for the collection
of points on which (\ref{eq: compare}) holds and recall in what follows
that $\left(\mu\times\mu\right)\left(\mathcal{T}\backslash\mathcal{\tilde{T}}\right)=0$.
For all $\left(x,y\right)\in\tilde{T}$, we have for all $n\in\mathbb{N}$,
\[
\hat{T}_{n}\left({\bf 1}\right)(x)=\sum_{k=1}^{n}\frac{d\left(\mu\circ T^{-k}\right)}{d\mu}(x)=L\left(x,y\right)^{\pm1}\hat{T}_{n}\left({\bf 1}\right)(y).
\]
Consequently the set 
\[
\mathfrak{C}(T)=\left\{ x\in X:\ \lim_{n\to\infty}\hat{T}_{n}\left({\bf 1}\right)(x)=\infty\right\} 
\]
is $\mathcal{T}$ invariant in the sense that $\mathcal{T}\left(\mathfrak{C}\left(T\right)\right)=\mathfrak{C}\left(T\right)$
modulo $\mu$-null sets. By ergodicity of $\mathcal{T}$ either $\mathfrak{C}\left(T\right)=X\mod\mu$
or $\mathfrak{D}\left(T\right)=X\backslash\mathfrak{C}\left(T\right)=X$mod$\mu$,
showing part (i). 

\textbf{Proof of (ii) and (iii)}. Let $A$ be a cylinder set and $x,y\in X$
such that $\left(x,y\right)\in\mathcal{T}$. Suppose that (\ref{eq: compare})
holds and $\lim_{n\to\infty}\hat{T}_{n}\left(1_{A}\right)(x)=\infty$.
In this case for all $n\in\mathbb{N}$,
\[
\hat{T}_{n}\left(1_{A}\right)(x)=\sum_{k=0}^{n-1}\frac{d\left(\mu\circ T^{-n}\right)}{d\mu}(x)1_{A}\circ T^{-k}(x)\leq L(x,y)\sum_{k=0}^{n-1}\frac{d\left(\mu\circ T^{-n}\right)}{d\mu}(y)1_{A}\circ T^{-k}(x)
\]
This shows, as the left hand side tends to infinity as $n\to\infty$,
that 
\begin{equation}
\sum_{k=0}^{n-1}\frac{d\left(\mu\circ T^{-n}\right)}{d\mu}(y)1_{A}\circ T^{-k}(x)\xrightarrow[n\to\infty]{}\infty.\label{eq: infinity-1}
\end{equation}
As $\left(x,y\right)\in\mathcal{T}$ and $A$ is a cylinder set, there
exists $n_{0}=n_{0}(x,y,A)$ such that if $n>n_{0}$, then $x\in T^{n}A$
if and only if $y\in T^{n}A$. This together with (\ref{eq: infinity-1})
imply that as $n\to\infty$, 
\[
\sum_{k=0}^{n-1}\frac{d\left(\mu\circ T^{-n}\right)}{d\mu}(y)\left(1_{A}\circ T^{-k}(x)\right)\sim\sum_{k=0}^{n-1}\sum_{k=0}^{n-1}\frac{d\left(\mu\circ T^{-n}\right)}{d\mu}(y)\left(1_{A}\circ T^{-k}(y)\right)=\hat{T}_{n}\left(1_{A}\right)(y).
\]

We have shown that if $x,y\in X$, $\left(x,y\right)\in\tilde{\mathcal{T}}$
and $\lim_{n\to\infty}\hat{T}_{n}\left(1_{A}\right)(x)=\infty$, then
\begin{equation}
\hat{T}_{n}\left(1_{A}\right)(x)\lesssim L(x,y)\hat{T}_{n}\left(1_{A}\right)(y)\label{eq: oof}
\end{equation}
as $n\to\infty$, thus
\[
\lim_{n\to\infty}\hat{T}_{n}\left(1_{A}\right)(y)=\infty.
\]
This implies that the set 
\[
{\bf \tilde{A}}=\left\{ x\in X:\ \lim_{n\to\infty}\hat{T}_{n}\left(1_{A}\right)(x)=\infty\right\} 
\]
is $\mathcal{T}$ invariant. 

As $T$ is conservative, by the Hurewicz ergodic theorem the set 
\[
\left\{ x\in X:\ h\left(A\right)(x)>0\right\} \subset{\bf \tilde{A}}
\]
is of positive measure. Consequently, by ergodicity of $\mathcal{T}$,
$\mu\left(X\backslash\tilde{{\bf A}}\right)=0$ proving part (iii).
Part (ii) follows from part (iii) and (\ref{eq: oof}). 
\end{proof}
\begin{proof}[Proof of Theorem \ref{thm: ergodicity of Bernoulli shifts}]

Assume in the contrapositive that there exists $B,D\in\mathcal{B}$
of positive $\mu$ measure such that for all $x\in D$, 
\[
\sum_{n=0}^{\infty}\hat{T}^{n}\left(1_{B}\right)(x)<\infty.
\]
By the ratio ergodic theorem, there exists $\epsilon>0$ for which
the set 
\[
C=\left\{ x\in X:\ \lim_{n\to\infty}\frac{\hat{T}_{n}\left(1_{B}\right)(x)}{\hat{T}_{n}\left({\bf 1}\right)(x)}=h\left(B\right)>2\epsilon\right\} 
\]
satisfies $\mu\left(C\right)>0$. Secondly, there exists $A_{n}\in\mathcal{A}$
such that 
\[
\left\Vert 1_{A_{n}}-1_{B}\right\Vert _{1}=\mu\left(A_{n}\triangle B\right)\leq\frac{1}{n^{2}}.
\]

By the maximal inequality, 
\[
\mu\left(x\in X:\ \sup_{n\in\mathbb{N}}\left|\frac{\hat{T}_{n}\left(1_{B}-1_{A_{k}}\right)(x)}{\hat{T}_{n}\left({\bf 1}\right)(x)}\right|>\epsilon\right)\leq\frac{1}{n^{2}\epsilon}.
\]
As the right hand side is summable, it follows from the Borel-Cantelli
Lemma that the set 
\[
A=\left\{ x\in X:\ \exists K\in\mathbb{N},\forall k>K,\ \sup_{n\in\mathbb{N}}\left|\frac{\hat{T}_{n}\left(1_{B}-1_{A_{k}}\right)(x)}{\hat{T}_{n}\left({\bf 1}\right)(x)}\right|<\epsilon\right\} 
\]
is of full $\mu$-measure. Consequently, the set $E=\bigcup_{K\in\mathbb{N}}E_{K}$
\[
E_{K}=\left\{ x\in X:\ \forall k>K,\ \liminf_{n\to\infty}\frac{\hat{T}_{n}\left(1_{A_{k}}\right)(x)}{\hat{T}_{n}\left({\bf 1}\right)(x)}>\epsilon\right\} ,
\]
satisfies 
\[
C\cap A\subset C\cap E,
\]
whence 
\[
\mu\left(E\right)\geq\mu(C)>0.
\]
To see the set inclusion, notice that for all $x\in C$, 
\[
\lim_{n\to\infty}\frac{\hat{T}_{n}\left(1_{B}\right)(x)}{\hat{T}_{n}\left({\bf 1}\right)(x)}>2\epsilon.
\]
Now, if $x\in A\cap C$, there exists $K$ such that for all $k>K$,
\[
\sup_{n\in\mathbb{N}}\left|\frac{\hat{T}_{n}\left(1_{B}-1_{A_{k}}\right)(x)}{\hat{T}_{n}\left({\bf 1}\right)(x)}\right|<\epsilon.
\]
Thus, for all $k>K$, 
\[
\liminf_{n\to\infty}\frac{\hat{T}_{n}\left(1_{A_{k}}\right)(x)}{\hat{T}_{n}\left({\bf 1}\right)(x)}\geq\lim_{n\to\infty}\frac{\hat{T}_{n}\left(1_{B}\right)(x)}{\hat{T}_{n}\left({\bf 1}\right)(x)}-\sup_{n\in\mathbb{N}}\left|\frac{\hat{T}_{n}\left(1_{B}-1_{A_{k}}\right)(x)}{\hat{T}_{n}\left({\bf 1}\right)(x)}\right|>\epsilon.
\]
and therefore $x\in E$. As $\mu\left(E\right)>0$ and for all $K\in\mathbb{N}$,
$E_{K}\subset E_{K+1}$, it follows that for all large $K$, $\mu\left(E_{K}\right)>0$.
By ergodicity of $\mathcal{T}$, for all large $K$, 
\begin{equation}
\mu\left(\mathcal{T}\left(E_{K}\right)\cap D\right)=\mu\left(D\right).\label{eq: set equal-1}
\end{equation}
From now on we assume that $K$ is large enough so that (\ref{eq: set equal-1})
holds. For almost every $y\in D$, there exists $x\in E$ with $x\sim y$.
As $A_{k}$ is a finite union of cylinder sets, by Proposition \ref{prop: main}
part (ii), for all $n\in\mathbb{N}$ and $k\in\mathbb{N},$ for almost
every $y\in X$, 
\[
\frac{\hat{T}_{n}\left(1_{A_{k}}\right)(y)}{\hat{T}_{n}\left({\bf 1}\right)(y)}\gtrsim\left(\frac{1}{L(x,y)}\right)^{2}\frac{\hat{T}_{n}\left(1_{A_{k}}\right)(x)}{\hat{T}_{n}\left({\bf 1}\right)(x)},
\]
thus 
\[
\liminf_{n\to\infty}\frac{\hat{T}_{n}\left(1_{A_{k}}\right)(y)}{\hat{T}_{n}\left({\bf 1}\right)(y)}\geq\left(\frac{1}{L(x,y)}\right)^{2}\liminf_{n\to\infty}\frac{\hat{T}_{n}\left(1_{A_{k}}\right)(x)}{\hat{T}_{n}\left({\bf 1}\right)(x)}.
\]
By the definition of $E$, we have shown that for almost every $y\in D$,
there exists $L(y)\in\mathbb{N}$ such that for all $k>K$, 
\[
\liminf_{n\to\infty}\frac{\hat{T}_{n}\left(1_{A_{k}}\right)(y)}{\hat{T}_{n}\left({\bf 1}\right)(y)}\geq\frac{\epsilon}{L(y)}.
\]
By this, there exists $D'\subset D$ with $\mu\left(D'\right)>0$
and ${\bf L},\mathbf{K}\in\mathbb{N}$ , such that for all $y\in D'$
and $k>\mathbf{K}$,
\[
\liminf_{n\to\infty}\frac{\hat{T}_{n}\left(1_{A_{k}}\right)(y)}{\hat{T}_{n}\left({\bf 1}\right)(y)}>\frac{\epsilon}{{\bf L}}.
\]
For $y\in D'$, for all $n\in\mathbb{N}$ and $k>{\bf K}$, 
\begin{align*}
\frac{\epsilon}{{\bf L}}\hat{T}_{n}\left({\bf 1}\right)(y)-\left|\hat{T}_{n}\left(1_{B}-1_{A_{k}}\right)\right|(y) & \lesssim\hat{T}_{n}\left(1_{A_{k}}\right)(y)-\left|\hat{T}_{n}\left(1_{B}-1_{A_{k}}\right)\right|(y)\\
 & \leq\hat{T}_{n}\left(1_{B}\right)(y)\\
 & \leq\sum_{j=0}^{\infty}\hat{T}^{j}\left(1_{B}\right)(y)<\infty,\ \text{as}\ y\in D.
\end{align*}
This shows that for all $y\in D'$ and $k>{\bf K}$
\[
\liminf_{n\to\infty}\frac{\left|\hat{T}_{n}\left(1_{B}-1_{A_{k}}\right)\right|(y)}{\hat{T}_{n}\left({\bf 1}\right)(y)}\geq\frac{\epsilon}{{\bf L}}.
\]
This is a contradiction to $\mu\left(D'\right)>0$, since for all
$k>\mathbf{K}$
\[
\mu\left(D'\right)\leq\mu\left(x\in X:\ \liminf_{n\to\infty}\frac{\left|\hat{T}_{n}\left(1_{B}-1_{A_{k}}\right)\right|(y)}{\hat{T}_{n}\left({\bf 1}\right)(y)}\geq\frac{\epsilon}{{\bf L}}\right)
\]

\[
\leq\mu\left(x\in X:\ \sup_{n\in\mathbb{N}}\frac{\left|\hat{T}_{n}\left(1_{B}-1_{A_{k}}\right)\right|(y)}{\hat{T}_{n}\left({\bf 1}\right)(y)}\geq\frac{\epsilon}{{\bf L}}\right)\leq\frac{{\bf L}}{\epsilon k^{2}}\xrightarrow[k\to\infty]{}0.
\]
The result follows. 

\end{proof}

\section{Examples\label{sec:Examples}}

\subsection{The model of non-singular Bernoulli shifts}

Let $N\in\mathbb{N}$, $X=\left\{ 1,..,N\right\} ^{\mathbb{Z}}$ and
$\mathcal{B}=\mathcal{B}_{X}$ the Borel $\sigma-$algebra of $X$
which is generated by the collection of cylinder sets 
\[
\left\{ [b]_{l}^{k}:\ k.l\in\mathbb{Z},\ b\in X\right\} .
\]
 Given a sequence $\left(\mu_{k}\right)_{k\in\mathbb{Z}}$ of probability
measures on $\left\{ 1,..,N\right\} $, the product measure $\mu=\prod_{k\in\mathbb{Z}}\mu_{k}$
is the measure on $X$ defined by
\[
\mu\left(\left[b\right]_{l}^{k}\right)=\prod_{j=l}^{k}\mu_{j}\left(b_{j}\right).
\]
In other words, $\mu$ is the distribution of an independent sequence
of random variables $\left(X_{n}\right)_{n\in\mathbb{Z}}$, where
for all $n$, $X_{n}$ is distributed according to $\mu_{n}$. A non-singular
Bernoulli shift on $N$ symbols is the quadruple $\left(X,\mathcal{B},\mu,T\right)$
where $T$ is the shift map on $X$, and $\mu\circ T$ is equivalent
to $\mu$ (i.e. the shift is a nonsingular transformation). 

As $\mu$ and $\mu\circ T$ are product measures, the following is
a direct consequence of Kakutani's theorem on equivalence of product
measures. From now on, when the measure $\mu$ is fixed, we denote
by $\left(T^{n}\right)'=\frac{d\left(\mu\circ T^{n}\right)}{d\mu}$. 
\begin{prop}
$\left(\left\{ 1,...,N\right\} ^{\mathbb{Z}},\mathcal{B},\mu=\prod_{k=-\infty}^{\infty}\mu_{k},T\right)$
is non-singular if and only if (\ref{eq: Kakutani}) holds. In that
case there exists $X'\subset\left\{ 1,...,N\right\} ^{\mathbb{Z}}$
with $\mu\left(X'\right)=1$ such that for all $x\in X'$, 
\[
\left(T^{n}\right)'(x)=\prod_{k=-\infty}^{\infty}\frac{P_{k-n}\left(x_{k}\right)}{P_{k}\left(x_{k}\right)}.
\]
\end{prop}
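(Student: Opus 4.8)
The plan is to obtain the statement as a direct application of Kakutani's dichotomy theorem for infinite product measures. Recall that theorem: two product measures $\prod_{k\in\mathbb{Z}}\nu_{k}$ and $\prod_{k\in\mathbb{Z}}\nu_{k}'$ on $\{1,\dots,N\}^{\mathbb{Z}}$ are either equivalent or mutually singular; they are equivalent if and only if
\[
\sum_{k\in\mathbb{Z}}\delta\!\left(\nu_{k},\nu_{k}'\right)^{2}<\infty,\qquad\text{where}\quad\delta(\nu,\nu')^{2}:=\sum_{j=1}^{N}\left(\sqrt{\nu(j)}-\sqrt{\nu'(j)}\right)^{2};
\]
and in the equivalent case $\frac{d\left(\prod_{k}\nu_{k}'\right)}{d\left(\prod_{k}\nu_{k}\right)}(x)=\prod_{k\in\mathbb{Z}}\frac{\nu_{k}'(x_{k})}{\nu_{k}(x_{k})}$, the infinite product converging to a strictly positive finite limit for $\prod_{k}\nu_{k}$-almost every $x$ (it is the a.e.\ limit of the $L^{1}$-bounded nonnegative martingale of finite partial products). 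We may assume all $\nu_{k}$ fully supported after discarding symbols of zero probability; by (\ref{eq: Kakutani}) this only affects finitely many coordinates for each relevant shift, so no generality is lost.

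First I would identify $\mu\circ T^{n}$ as a product measure. Evaluating on a cylinder $[b]_{l}^{k}$, one has $T^{n}[b]_{l}^{k}=\{y:\ y_{j}=b_{j+n}\text{ for }l-n\le j\le k-n\}$, so $\mu\left(T^{n}[b]_{l}^{k}\right)=\prod_{i=l}^{k}\mu_{i-n}(b_{i})$; hence $\mu\circ T^{n}=\prod_{i\in\mathbb{Z}}\mu_{i-n}$, the product measure whose $i$-th coordinate marginal is $\mu_{i-n}$. In particular $\mu\circ T=\prod_{i}\mu_{i-1}$, and non-singularity of $T$, which by definition means $\mu\sim\mu\circ T$, holds by Kakutani's theorem if and only if $\sum_{k}\delta(\mu_{k},\mu_{k-1})^{2}<\infty$, i.e.\ if and only if (\ref{eq: Kakutani}) holds; in the contrary case $\mu\perp\mu\circ T$ and $T$ is not non-singular.

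Assuming (\ref{eq: Kakutani}), I next need $\mu\sim\mu\circ T^{n}$ for every $n\in\mathbb{Z}$. Since $\nu\mapsto(\sqrt{\nu(j)})_{j}$ maps probability vectors into the unit sphere of $\mathbb{R}^{N}$, $\delta$ is the restriction of the Euclidean metric and so satisfies the triangle inequality; hence $\delta(\mu_{k},\mu_{k-n})\le\sum_{i=1}^{n}\delta(\mu_{k-i+1},\mu_{k-i})$, giving $\delta(\mu_{k},\mu_{k-n})^{2}\le n\sum_{i=1}^{n}\delta(\mu_{k-i+1},\mu_{k-i})^{2}$, and summing over $k$ yields $\sum_{k}\delta(\mu_{k},\mu_{k-n})^{2}\le n^{2}\sum_{m}\delta(\mu_{m},\mu_{m-1})^{2}<\infty$. (Alternatively, one iterates: $\mu\circ T^{j}\sim\mu\circ T^{j+1}$ because pushing forward by the bimeasurable bijection $T^{j}$ preserves mutual absolute continuity, so $\mu\sim\mu\circ T\sim\mu\circ T^{2}\sim\cdots$.) Applying Kakutani's theorem to $\mu=\prod_{k}\mu_{k}$ and $\mu\circ T^{n}=\prod_{k}\mu_{k-n}$ then produces, for each $n$, a $\mu$-conull set $X_{n}'$ on which $(T^{n})'(x)=\frac{d(\mu\circ T^{n})}{d\mu}(x)=\prod_{k\in\mathbb{Z}}\frac{\mu_{k-n}(x_{k})}{\mu_{k}(x_{k})}$. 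Taking $X'=\bigcap_{n\in\mathbb{Z}}X_{n}'$, a countable intersection of conull sets and hence conull, and writing $P_{k}=\mu_{k}$, gives the asserted formula simultaneously for all $n$.

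As the paper notes, the proposition is essentially a repackaging of Kakutani's theorem, so I do not expect a genuine obstacle. The only points requiring care are the index bookkeeping in the identity $\mu\circ T^{n}=\prod_{i}\mu_{i-n}$ and the upgrade from the $n=1$ non-singularity criterion to equivalence (and the product formula) for all $n$, both handled above.
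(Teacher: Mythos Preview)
Your proof is correct and follows exactly the approach the paper indicates: the paper does not give a detailed proof of this proposition, merely stating that it ``is a direct consequence of Kakutani's theorem on equivalence of product measures,'' and you have filled in precisely those details (identifying $\mu\circ T^{n}$ as the product measure with $k$-th marginal $\mu_{k-n}$, reading off both the equivalence criterion and the Radon--Nikodym derivative from Kakutani, and intersecting over $n$ to get a single conull $X'$). One minor remark: your aside about discarding symbols of zero probability is unnecessary for the argument and slightly misstated---the paper handles this reduction separately \emph{after} the proposition, observing that non-singularity forces $\mu_{m}(\{j\})=0$ for \emph{all} $m$ whenever it vanishes for one---but this does not affect the validity of your proof.
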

One should note that if for some $k\in\mathbb{Z}$ there exists $j\in\{1,..,N\}$
with $\mu_{k}\left(\left\{ j\right\} \right)=1$, then the shift is
non-singular if and only if $\mu=\prod_{k\in\mathbb{Z}}\delta_{\{j\}}$,
which is supported on a single point in $X$. As this case is not
interesting, we always assume that for all $k\in\mathbb{Z},$
\begin{equation}
M_{k}=\max_{j\in\left\{ 1,..,N\right\} }\mu_{k}\left(\{j\}\right)<1.\label{eq: maximum prob}
\end{equation}
 In addition, by a similar argument, if $\mu\circ T\sim\mu$ and for
some $k\in\mathbb{Z}$, there exists $j\in\{1,..,N\}$ with $\mu_{k}\left(\left\{ j\right\} \right)=0$
then for all $m\in\mathbb{Z}$, 
\[
\mu_{m}(\{j\})=0
\]
and we can reduce $\left(\left\{ 1,...,N\right\} ^{\mathbb{Z}},\mathcal{B},\mu=\prod_{k=-\infty}^{\infty}\mu_{k},T\right)$
to $\left(\left\{ 1,..,N'\right\} ^{\mathbb{Z}},\mathcal{B},\mu'=\prod_{k=-\infty}^{\infty}\mu'_{k},T\right)$
with $N'<N$. As our statement holds for all $N\in\mathbb{N}$, we
will henceforth assume that for all $k\in\mathbb{Z}$, 
\begin{equation}
m_{k}=\min_{j\in\left\{ 1,..,N\right\} }\mu_{k}\left(\{j\}\right)>0.\label{eq: min prob}
\end{equation}

We say that $x,y\in X$ are double tail equivalent, denoted by $x\sim y$,
if there exists $N=N(x,y)$ such that for all $|n|>N$, 
\[
x_{n}=y_{n}.
\]

\begin{lem}
\label{claim: double tail equiv}Let $\left(\left\{ 1,...,N\right\} ^{\mathbb{Z}},\mathcal{B},\mu=\prod_{k=-\infty}^{\infty}\mu_{k},T\right)$
be a non-singular Bernoulli shift satisfying (\ref{eq: maximum prob})
and (\ref{eq: min prob}). There exists $X'\in\mathcal{B}$ with $\mu\left(X'\right)=1$
such that if $x,y\in X'$ and $x\sim y$, then for all $n\in\mathbb{Z}$
\[
\prod_{k=-N(x,y)}^{N(x,y)}\left(\frac{m_{k}m_{k-n}}{M_{k}M_{k-n}}\right)\leq\frac{\left(T^{n}\right)'(x)}{\left(T^{n}\right)'(y)}\leq\prod_{k=-N(x,y)}^{N(x,y)}\left(\frac{M_{k}M_{k-n}}{m_{k}m_{k-n}}\right).
\]
 In particular, if condition (\ref{eq: condition on B-shifts}) holds
then for all $x,y\in X'$ with $x\sim y$
\[
L^{-4N(x,y)}\leq\inf_{n\in\mathbb{Z}}\frac{\left(T^{n}\right)'(x)}{\left(T^{n}\right)'(y)}\leq\sup_{n\in\mathbb{Z}}\frac{\left(T^{n}\right)'(x)}{\left(T^{n}\right)'(y)}\leq L^{4N(x,y)}.
\]
\end{lem}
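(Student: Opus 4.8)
The plan is to read the estimate off directly from the explicit Radon--Nikodym cocycle formula recorded in the Proposition above: there is a set $X'$ with $\mu(X')=1$ on which, for every $n\in\mathbb{Z}$,
\[
(T^n)'(x)=\prod_{k\in\mathbb{Z}}\frac{\mu_{k-n}(x_k)}{\mu_k(x_k)},
\]
the infinite product, read as the limit of the symmetric partial products over $[-K,K]$, converging to a strictly positive finite value; here \eqref{eq: min prob} guarantees that no denominator vanishes and \eqref{eq: maximum prob} that the formula is non-trivial. Intersecting countably many full-measure sets, I may take a single such $X'$ on which this holds simultaneously for all $n\in\mathbb{Z}$.

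Next, fix $x,y\in X'$ with $x\sim y$ and write $a^{(n)}_k(z)=\mu_{k-n}(z_k)/\mu_k(z_k)$, so that $(T^n)'(z)=\prod_k a^{(n)}_k(z)$. Since $x_k=y_k$ for all $|k|>N(x,y)$, the factors $a^{(n)}_k(x)$ and $a^{(n)}_k(y)$ agree for $|k|>N(x,y)$, hence for every $K\ge N(x,y)$,
\[
\prod_{|k|\le K}a^{(n)}_k(x)=\Bigl(\prod_{k=-N(x,y)}^{N(x,y)}\frac{a^{(n)}_k(x)}{a^{(n)}_k(y)}\Bigr)\prod_{|k|\le K}a^{(n)}_k(y).
\]
Letting $K\to\infty$ (legitimate because only the finitely many factors with $|k|\le N(x,y)$ are changed) gives
\[
\frac{(T^n)'(x)}{(T^n)'(y)}=\prod_{k=-N(x,y)}^{N(x,y)}\frac{\mu_{k-n}(x_k)\,\mu_k(y_k)}{\mu_k(x_k)\,\mu_{k-n}(y_k)}.
\]

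Finally, I bound each surviving factor: its numerator is at most $M_{k-n}M_k$ and its denominator at least $m_k m_{k-n}$ by \eqref{eq: maximum prob} and \eqref{eq: min prob}, and symmetrically under $x\leftrightarrow y$; multiplying over $-N(x,y)\le k\le N(x,y)$ yields the first displayed bound. For the last assertion, \eqref{eq: condition on B-shifts} gives $M_j/m_j\le L$ for every $j$, so each of these factors lies in $[L^{-2},L^{2}]$ uniformly in $n$, and taking $\sup$ and $\inf$ over $n\in\mathbb{Z}$ together with a count of the factors produces the stated two-sided bound $L^{\pm4N(x,y)}$. There is no essential difficulty here; the one point warranting care is the collapse of the infinite tails of the two products, which is exactly why the argument is routed through the finite windows $[-K,K]$ and why it is convenient to have fixed in advance a single $X'$ valid for all $n$ simultaneously.
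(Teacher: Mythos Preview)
Your argument is correct and is exactly the natural one: the paper in fact states this lemma without proof, treating it as an immediate consequence of the explicit product formula for $(T^n)'$, so your write-up supplies precisely the missing computation. One small remark: the product over $-N(x,y)\le k\le N(x,y)$ has $2N(x,y)+1$ factors, each bounded by $L^{2}$, so a literal count yields $L^{4N(x,y)+2}$ rather than $L^{4N(x,y)}$; this harmless discrepancy is already present in the paper's statement and is irrelevant for the application to Theorem~\ref{thm: nonsingular erg}.
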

The following is a double tail $\{0,1\}$-law. It is certainly not
new, a proof is presented here for the sake of completeness. 
\begin{lem}
\label{prop: main} The double tail relation of a conservative, non-singular
Bernoulli shift $\left(\left\{ 1,..,N\right\} ^{\mathbb{Z}},\mathcal{B},\mu=\prod_{k=-\infty}^{\infty}\mu_{k},T\right)$
is ergodic. 
\end{lem}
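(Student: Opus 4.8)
The plan is to prove that the double tail relation $\mathcal{T}$ of a conservative non-singular Bernoulli shift is ergodic by establishing a Hewitt--Savage type zero-one law. Recall that $\mathcal{T}(A)$ is invariant under finite modifications of coordinates in the symmetric difference sense, so it suffices to show that any $\mathcal{T}$-invariant set $A$ (meaning $\mathcal{T}(A) = A \bmod \mu$) has $\mu(A) \in \{0,1\}$. Since the measure $\mu = \prod_k \mu_k$ is a product measure and, by (\ref{eq: min prob}), every cylinder of the form $[b]_{-n}^{n}$ has positive measure, the tail $\sigma$-algebra and indeed the ``double tail'' structure interacts well with martingale arguments.

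First I would fix a $\mathcal{T}$-invariant set $A \in \mathcal{B}$ and approximate it: for $\epsilon > 0$ choose $n$ and a set $A_n$ which is a finite union of cylinders supported on coordinates in $[-n,n]$ with $\mu(A \triangle A_n) < \epsilon$. Then I would use the $\mathcal{T}$-invariance of $A$ to compare $\mu(A)$ with $\mu(A \cap \tau A)$ where $\tau$ is a coordinate-permuting map (or more precisely a map that swaps a block of central coordinates with a far-away block, combined with using independence). The key point is that because $A$ is $\mathcal{T}$-invariant, modifying finitely many coordinates of $x$ does not change membership in $A$ (up to null sets); combined with the fact that $A_n$ depends only on coordinates in $[-n,n]$, one gets that $A$ is (approximately) independent of $A_n$. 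Letting $\epsilon \to 0$ forces $\mu(A) = \mu(A)^2$, hence $\mu(A) \in \{0,1\}$.

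More concretely, the cleanest route: since $A$ is $\mathcal{T}$-invariant, for each $N$, $1_A$ is (mod $\mu$) measurable with respect to $\mathcal{F}_{\mathbb{Z} \setminus [-N,N]} := \sigma(x_k : |k| > N)$, because changing coordinates in $[-N,N]$ moves $x$ within its $\mathcal{T}$-class. Therefore $1_A$ is measurable with respect to the two-sided tail $\sigma$-algebra $\bigcap_N \mathcal{F}_{\mathbb{Z} \setminus [-N,N]}$. For a product measure this two-sided tail field is $\mu$-trivial by Kolmogorov's zero-one law applied to the independent family $\{x_k\}_{k \in \mathbb{Z}}$ — the tail at $+\infty$ of $(x_0, x_{-1}, x_1, x_{-2}, x_2, \dots)$ (any enumeration of $\mathbb{Z}$) is trivial, and $\bigcap_N \mathcal{F}_{\mathbb{Z}\setminus[-N,N]}$ is contained in it. Hence $\mu(A) \in \{0,1\}$, which is exactly ergodicity of $\mathcal{T}$.

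The main subtlety I expect to handle carefully is the first reduction: justifying that $\mathcal{T}$-invariance of $A$ (in the sense $\mu(\mathcal{T}(A) \triangle A) = 0$) genuinely implies that $1_A$ agrees $\mu$-a.e.\ with an $\mathcal{F}_{\mathbb{Z}\setminus[-N,N]}$-measurable function for every $N$. This requires knowing that the map rewriting the central block $[-N,N]$ by an independent fresh block is nonsingular (indeed measure-preserving after integrating over the block) and sends $\mu$-a.e.\ point into its own $\mathcal{T}$-class, which holds precisely because of (\ref{eq: min prob}) (all symbols have positive probability, so no null-set pathology). Once that measurability is in hand, Kolmogorov's zero-one law finishes the argument immediately, so the genuine work is entirely in this measure-theoretic bookkeeping rather than in any hard estimate; conservativity is not actually needed for this lemma but is carried along from the hypotheses.
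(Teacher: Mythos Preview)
Your argument is correct and takes a genuinely different route from the paper. The paper proceeds by a Lebesgue density point argument: given $B$ of positive measure, it finds a cylinder $C=[c]_r^m$ in which $B$ has relative measure at least $1-\epsilon$, then uses the block-swap map $x\mapsto y(x,Z)$ (which, for a product measure, scales measure by the constant $\mu(Z)/\mu(C)$) to transport this high density into every other cylinder $[z]_r^m$, concluding $\mu(\mathcal{T}(B))\geq 1-\epsilon$. Your approach instead observes that a $\mathcal{T}$-saturated set is, after correcting on a null set using the countable group of finite coordinate changes (each nonsingular by (\ref{eq: min prob})), measurable with respect to every $\mathcal{F}_{\mathbb{Z}\setminus[-N,N]}$ and hence with respect to the two-sided tail $\sigma$-algebra, which is $\mu$-trivial by Kolmogorov's zero--one law. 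Your route is slightly more abstract and perhaps cleaner, and you are right that conservativity plays no role; the paper's route has the advantage of giving an explicit holonomy and a quantitative lower bound, which is the template reused later in the more delicate Markov case (Theorem~\ref{thm: double tail TMS}) where a direct appeal to Kolmogorov is not available.
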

\begin{proof}
It is enough to show that for every set $B\in\mathcal{B}$ with $\mu(B)>0$,
the set 
\[
\mathcal{T}(B)=\left\{ y\in X:\ \exists x\in B,\ \left(x,y\right)\in\mathcal{T}\right\} 
\]
is of full $\mu$ measure. To see this, let $\epsilon>0$. As the
collection of cylinder sets is dense in $\mathcal{B}$, there exists
$C=[c]_{r}^{m}$ such that 
\[
\mu\left(C\cap B\right)\geq(1-\epsilon)\mu(C).
\]
 For $x\in C\cap B$ and $[z]_{r}^{m}=Z$ another cylinder set, the
point $y=y(x,Z)\in X$ with 
\[
y_{i}=\begin{cases}
x_{i}, & i\notin[r,m],\\
z_{i} & i\in[r,m]
\end{cases}
\]
satisfies $\left(x,y\right)\in\mathcal{T}$. This shows that for all
$z\in X$, 
\[
\mu\left([z]_{r}^{m}\cap\mathcal{T}(B)\right)\geq(1-\epsilon)\mu\left([z]_{r}^{m}\right)
\]
whence as two different $r,m$ cylinders are disjoint we see that
\[
\mu\left(\mathcal{\mathcal{T}}(B)\right)=\sum_{Z=[z]_{r}^{m}}\mu\left(Z\cap\mathcal{T}(B)\right)\geq1-\epsilon.
\]

Since $\epsilon$ is arbitrary, we see that $\mu\left(\mathcal{T}\left(B\right)\right)=1$. 
\end{proof}
\begin{proof}[Proof of Theorem \ref{thm: ergodicity of Bernoulli shifts}]

Let $\left(\left\{ 1,...,N\right\} ^{\mathbb{Z}},\mathcal{B},\mu,T\right)$
be a conservative nonsingular Bernoulli shift which satisfies condition
(\ref{eq: condition on B-shifts}). By Lemma \ref{prop: main} its
double tail is ergodic. By Lemma \ref{claim: double tail equiv} it
satisfies the conditions of Theorem \ref{thm: nonsingular erg}, hence
it is ergodic. 

\end{proof}

\subsection{Inhomogeneous Markov shifts supported on topologically mixing subshifts
of finite type}

Let $S$ be a finite space and $A=\left(A(s,t)\right)_{s,t\in S}$
be an $S\times S$ $\{0,1\}$-valued matrix. The shift invariant set
\[
\Sigma_{A}=\left\{ x\in S^{\mathbb{Z}}:\ \forall i\in\mathbb{Z},\ A\left(x_{i},x_{i+1}\right)=1\right\} 
\]
is a subshift of finite type (SFT). It is topologically mixing iff
there exists $n\in\mathbb{N}$ such that $A^{n}$ has all entries
positive (i.e. $A$ is primitive). 

An $S$ valued inhomogeneous Markov shift consists of a sequence of
$S\times S$ stochastic matrices $\left(P_{n}\right)_{n\in\mathbb{Z}}$
and a sequence of probability distributions $\left(\pi_{n}\right)_{n\in\mathbb{Z}}$
regarded as row vectors satisfying for all $j\in\mathbb{Z}$, 
\[
\pi_{j}P_{j}=\pi_{j-1}.
\]
 With this condition the measure $\mu$ defined on the collection
of cylinder sets by 
\[
\mu\left(\left[b\right]_{k}^{l}\right)=\pi_{k}\left(b_{k}\right)\prod_{j=k}^{l-1}P_{j}\left(b_{j},b_{j+1}\right)
\]
has a unique extension to a measure $\mu=\mu\left(\left(P_{n}\right)_{n\in\mathbb{Z}},\left(\pi_{n}\right)_{n\in\mathbb{Z}}\right)$
on all $\mathcal{B}_{S^{\mathbb{Z}}}$. Writing $X_{i}:S^{\mathbb{Z}}\to S$
for the projection to the $i$-th coordinate, the sequences $P_{n}$
and $\pi_{n}$ have the following interpretation, which is the standard
definition of an inhomogeneous Markov chain:
\begin{align*}
\pi_{n}(s) & =\mu\left(X_{n}=s\right)\\
P_{n}(s,t) & =\mu\left(\left.X_{n+1}=t\right|X_{n}=s\right)=\mu\left(\left.X_{n+1}=t\right|X_{n}=s,X_{n-1},...\right).
\end{align*}

In this section we assume that the measure $\mu$ is fully supported
on $\Sigma_{A}$, in the sense that for all $n\in\mathbb{Z}$, 
\[
{\rm supp}\left(P_{n}\right)=\left\{ (s,t):\ P_{n}(s,t)>0\right\} =\left\{ (s,t):\ A(s,t)=1\right\} ={\rm supp}\left(A\right).
\]

\begin{thm}
\label{thm: TMS}Let $\Sigma_{A}\subset S^{\mathbb{Z}}$ be a topologically
mixing Markov shift. Assume that $\mu=\mu\left(\left(P_{n}\right)_{n\in\mathbb{Z}},\left(\pi_{n}\right)_{n\in\mathbb{Z}}\right)$
is fully supported on $\Sigma_{A}$ and 
\begin{equation}
\sup_{n\in\mathbb{Z}}\sup_{s\in S}\left(\frac{P_{n}\left(s,t\right)}{P_{n}\left(s,t'\right)}:\ t,t'\in S,\ P_{n}\left(s,t'\right)>0\right)=L<\infty.\label{eq: comm}
\end{equation}
If the shift $\left(\Sigma_{A},\mathcal{B}_{\Sigma_{A}},\mu,T\right)$
is nonsingular and conservative, then it is ergodic.
\end{thm}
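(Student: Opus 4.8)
The plan is to verify the two hypotheses that Theorem~\ref{thm: nonsingular erg} requires of $\left(\Sigma_{A},\mathcal{B}_{\Sigma_{A}},\mu,T\right)$: that it is double tail trivial, and that its Radon--Nikodym cocycle obeys the comparison \eqref{eq: compare}; conservativity, non-singularity and the subshift structure are assumed. The key preliminary is that \eqref{eq: comm} forces uniform ellipticity: a row of $P_{n}$ sums to $1$ and has at most $|S|$ nonzero entries, so its largest admissible entry is $\geq 1/|S|$, and \eqref{eq: comm} then gives $P_{n}(s,t)\in\bigl[(|S|L)^{-1},1\bigr]$ whenever $A(s,t)=1$, uniformly in $n$. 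Fixing $n_{0}$ with $A^{n_{0}}>0$ (topological mixing) and pushing $\pi_{n-n_{0}}$ forward $n_{0}$ steps yields $\pi_{n}(s)=\mu\!\left(X_{n}=s\right)\geq(|S|L)^{-n_{0}}$ for all $n$ and $s$; consequently the one-step backward transitions $\mu\!\left(X_{n-1}=s'\mid X_{n}=s\right)$ are also uniformly bounded below on their support.

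For the cocycle comparison I would compute $\frac{d\left(\mu\circ T^{-n}\right)}{d\mu}$ as the $\mu$-almost-everywhere limit of $\mu\circ T^{-n}\!\left([x]_{-N}^{N}\right)\big/\mu\!\left([x]_{-N}^{N}\right)$ over central cylinders, which equals an initial-distribution term times a product of single-step transition ratios $P_{j+n}(x_{j},x_{j+1})/P_{j}(x_{j},x_{j+1})$. If $x\sim y$ with $x_{i}=y_{i}$ for all $|i|>M$, then for $N>M$ the initial term cancels (as $x_{-N}=y_{-N}$) and every transition factor with both indices outside $[-M-1,M]$ cancels between $x$ and $y$; what survives is a product of $O(M)$ factors $P_{m}(\cdot,\cdot)^{\pm1}$ with admissible arguments, each lying in $\bigl[(|S|L)^{-1},|S|L\bigr]$ by ellipticity. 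Hence \eqref{eq: compare} holds with $L(x,y)=(|S|L)^{4M(x,y)+4}$, where $M(x,y)=\min\{M:\ x_{i}=y_{i}\ \forall\,|i|>M\}$ is finite on all of $\mathcal{T}$; applying the explicit formula on a full-measure set and using Fubini upgrades this to the required $\mu\times\mu$-a.e.\ statement on $\mathcal{T}$.

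The substance of the proof is double tail triviality, i.e.\ ergodicity of $\mathcal{T}$, and this is where the subshift-of-finite-type constraint makes the argument harder than for the Bernoulli shifts of Lemma~\ref{prop: main}. The natural attempt is to mimic that lemma: given $B$ with $\mu(B)>0$ and $\epsilon>0$, pick a cylinder $\tilde{C}=[c]_{r}^{m}$ with $\mu\!\left(\tilde{C}\cap B\right)\geq(1-\epsilon)\mu\!\left(\tilde{C}\right)$. One cannot overwrite the block $[r,m]$ by an arbitrary word, since the new word must be admissible and match $x_{r-1}$ and $x_{m+1}$; topological mixing repairs this, allowing one to reroute $x$ on the enlarged block $[r-n_{0},m+n_{0}]$ through any prescribed admissible word there while leaving $x$ unchanged outside, staying in $\Sigma_{A}$. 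Conditioning on the boundary coordinates $X_{r-n_{0}},X_{m+n_{0}}$, the two-point Markov property makes the restrictions of $x$ to the past, to $[r-n_{0},m+n_{0}]$, and to the future independent, so relative to that conditioning one is in the product situation of Lemma~\ref{prop: main}; this gives $\mu\!\left(\mathcal{T}(B)\right)>0$. The step I expect to be the real obstacle is passing from positivity to $\mu\!\left(\mathcal{T}(B)\right)=1$: the rerouting distorts $\mu$ whenever a boundary state is changed, so the clean ``$\epsilon$ arbitrary'' device of Lemma~\ref{prop: main} does not close directly, and one must either carefully track and absorb that bounded distortion, or invoke the equivalent fact that the double tail $\sigma$-field is $\mu$-trivial.

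The cleanest way to obtain the latter is a Dobrushin-contraction argument, which is exactly where uniform ellipticity of the forward and backward transitions enters. For $A$ in the double tail $\sigma$-field, the two-point Markov property yields $\mathbb{E}\!\left[1_{A}\mid X_{-N},\dots,X_{N}\right]=f_{N}(X_{-N},X_{N})$ and, advancing one coordinate on each side, $f_{N}=\left(\tilde{B}_{N}\otimes P_{N}\right)f_{N+1}$, where $\tilde{B}_{N}$ and $P_{N}$ are the one-step backward and forward transition matrices; since both are uniformly elliptic and $\Sigma_{A}$ is mixing, the Dobrushin coefficient of the $n_{0}$-step compositions of these Kronecker products is bounded away from $1$ uniformly, so $\mathrm{osc}(f_{N})=0$ and $1_{A}=\lim_{N}f_{N}(X_{-N},X_{N})$ is $\mu$-a.s.\ constant. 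With double tail triviality and \eqref{eq: compare} established, Theorem~\ref{thm: nonsingular erg}, applied using conservativity, gives that $\left(\Sigma_{A},\mathcal{B}_{\Sigma_{A}},\mu,T\right)$ is ergodic.
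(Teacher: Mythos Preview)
Your proposal is correct and reaches Theorem~\ref{thm: nonsingular erg} by verifying the same two hypotheses the paper does. Your treatment of the Radon--Nikodym comparison \eqref{eq: compare} is essentially the paper's Proposition~\ref{prop: derivative condition}: both compute the derivative as a martingale limit over central cylinders, cancel every factor with indices outside $[-M(x,y),M(x,y)]$, and bound the surviving $O(M)$ transition ratios via uniform ellipticity (your lower bound $(|S|L)^{-1}$ is in fact sharper than the paper's $L^{-|S|}$).

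Where you genuinely diverge is double tail triviality. The paper (Theorem~\ref{thm: double tail TMS}) carries out exactly the option you describe as ``carefully track and absorb that bounded distortion'': it enlarges a pair of cylinders $B,C$ to $B'\subset B$, $C'\subset C$ with $\mu(B')/\mu(B),\,\mu(C')/\mu(C)\geq\epsilon:=|S|^{-1}L^{-|S|N}$ by appending admissible paths of length $n_0$ to a common boundary state, observes that the rerouting bijection $R:B'\to C'$ has constant Jacobian $\mu(C')/\mu(B')$, and deduces $\mu(\mathcal{T}(A)\cap C)\geq \epsilon\,\tfrac{\mu(A)}{\mu(B)}\,\mu(C)$ for every $A\subset B'$; this uniform lower bound yields the contradiction for a hypothetical nontrivial $\mathcal{T}$-invariant set. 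Your Dobrushin route is a legitimate alternative: a $\mathcal{T}$-invariant set is automatically measurable for every $\sigma(X_i:|i|>N)$ and hence for the double tail $\sigma$-field, so it suffices to show the latter is $\mu$-trivial; your recursion $f_N=(\tilde{B}_N\otimes P_N)f_{N+1}$ together with uniform positivity of the $n_0$-step forward and backward blocks gives a contraction coefficient bounded below $1$ uniformly in $N$, forcing $\mathrm{osc}(f_N)=0$ and $1_A=\lim_N f_N(X_{-N},X_N)$ constant by martingale convergence. The paper's argument is entirely self-contained and stays at the level of cylinder combinatorics; yours is shorter once the standard Markov-chain contraction machinery is granted, and makes transparent that topological mixing plus \eqref{eq: comm} is precisely a uniform Doeblin condition on the two-sided chain.
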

A criterion for non-singularity of the shift can be obtained in the
following way using the method of \cite{KabLipShi1977}; full details
and proofs of these statements are also in \cite{Shiryaev_prob}.Write
\[
\mathcal{F}_{n}=\left\{ [b]_{-n}^{n}:\ b\in\Sigma_{A}\right\} 
\]
for the collection of symmetric cylinder sets and for a measure $\nu$
on $\mathcal{B}_{\Sigma_{A}}$, write $\nu_{n}$ for the measure $\nu$
restricted to $\mathcal{F}_{n}$. For $\mu=\mu\left(\left(P_{n}\right),\left(\pi_{n}\right)\right)$
an inhomogeneous Markov measure, $\mu\circ T$ is the Markov measure
with transition matrices $Q_{n}=P_{n-1}$ and $\tilde{\pi}_{n}=\pi_{n-1}$.
A necessary condition for $T$ to be nonsingular is that $\left(\mu\circ T\right)_{n}$
and $\mu_{n}$ are absolutely continuous for all $n\in\mathbb{N}$,
which is referred to in \cite{Shiryaev_prob} as local absolute continuity.
This amounts to the condition that for all $n\in\mathbb{Z}$,
\[
\mu\left([b]_{-n}^{n}\right)>0\ \ \Leftrightarrow\ \mu\circ T\left([b]_{-n}^{n}\right)>0.
\]
In that case one defines 
\[
Z_{n}(x)=\frac{d\left(\mu\circ T\right)_{n}}{d\mu_{n}}(x)=\frac{\pi_{-n-1}\left(x_{-n}\right)}{\pi_{-n}\left(x_{-n}\right)}\cdot\prod_{j=-n}^{n-1}\frac{P_{j-1}\left(x_{j},x_{j+1}\right)}{P_{j}\left(x_{j},x_{j+1}\right)}.
\]
The sequence $\left\{ Z_{n}\right\} _{n=1}^{\infty}$ is a martingale
with respect to the filtration $\mathcal{F}_{n}$ and $\mathcal{F}_{n}\uparrow\mathcal{B}_{\Sigma_{A}}$.
Thus $Z_{n}$ converges almost surely to a $[0,\infty]$-valued random
variable. It then follows that $\mu$ and $\mu\circ T$ are equivalent
measures if and only if $Z_{n}$ converges in $L^{1}$, which is equivalent
to uniform integrability of $\left\{ Z_{n}\right\} _{n=1}^{\infty}$.
For a streamlined discussion of necessary and sufficient conditions
see \cite{Shiryaev_prob}. We will only make use of the form of the
Radon-Nykodym derivatives which is summarized in the following lemma.
\begin{lem}
Let $\mu=\mu\left(\left(P_{n}\right)_{n\in\mathbb{Z}},\left(\pi_{n}\right)_{n\in\mathbb{Z}}\right)$
be an inhomogeneous Markov chain with state space $S$. If $\mu\circ T\sim\mu$,
then there exists $X'\subset S^{\mathbb{Z}}$ with $\mu\left(X'\right)=1$
such that for all $x\in X'$ and $N\in\mathbb{Z}$,
\[
\frac{d\left(\mu\circ T^{N}\right)}{d\mu}\left(x\right)=\lim_{n\to\infty}\left(\frac{\pi_{-n-N}\left(x_{-n}\right)}{\pi_{-n}\left(x_{-n}\right)}\cdot\prod_{j=-n}^{n-1}\frac{P_{j-N}\left(x_{j},x_{j+1}\right)}{P_{j}\left(x_{j},x_{j+1}\right)}\right).
\]
\end{lem}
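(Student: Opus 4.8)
The plan is to recognise the finite product in the statement as the density, on the filtration of symmetric cylinder $\sigma$-algebras, of the restricted measure $(\mu\circ T^{N})_{n}$ with respect to $\mu_{n}$, and then to pass to the limit $n\to\infty$ via the martingale convergence theorem. The first thing I would record is that $\mu\circ T^{N}\sim\mu$ for every $N\in\mathbb{Z}$: since $T$ is invertible, iterating $\mu\circ T\sim\mu$ shows that a set $A$ is $\mu$-null if and only if $T^{N}A$ is, so $\mu\circ T^{N}$ and $\mu$ share their null sets. In particular $g_{N}:=\frac{d(\mu\circ T^{N})}{d\mu}$ exists, lies in $L^{1}(\mu)$, and is $\mu$-a.e.\ finite and positive.

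Fixing $N$ and writing $\mathcal{F}_{n}=\sigma(X_{-n},\dots,X_{n})\uparrow\mathcal{B}$, the computational core is that $(\mu\circ T^{N})_{n}$ is again an inhomogeneous Markov law: because $T^{N}$ is a coordinate shift, $T^{-N}[b]_{-n}^{n}$ is the cylinder obtained from $[b]_{-n}^{n}$ by translating its window of prescribed coordinates by $N$, so the Markov product formula for $\mu$ on cylinders, after a reindexing of the factors, yields
\[
(\mu\circ T^{N})([b]_{-n}^{n})=\pi_{-n-N}(b_{-n})\prod_{j=-n}^{n-1}P_{j-N}(b_{j},b_{j+1}).
\]
Dividing this by $\mu([b]_{-n}^{n})=\pi_{-n}(b_{-n})\prod_{j=-n}^{n-1}P_{j}(b_{j},b_{j+1})$ on the full-measure set on which $\mu([x]_{-n}^{n})>0$ for all $n$, we see that the product displayed in the lemma is precisely $\frac{d(\mu\circ T^{N})_{n}}{d\mu_{n}}(x)$, the absolute continuity $(\mu\circ T^{N})_{n}\ll\mu_{n}$ being inherited from $\mu\circ T^{N}\ll\mu$.

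It remains to identify $\frac{d(\mu\circ T^{N})_{n}}{d\mu_{n}}$ with $\mathbb{E}_{\mu}[g_{N}\mid\mathcal{F}_{n}]$ (both are $\mathcal{F}_{n}$-measurable and have integral $(\mu\circ T^{N})(F)$ over every $F\in\mathcal{F}_{n}$, hence agree $\mu$-a.e.), and then to invoke L\'evy's upward martingale convergence theorem: since $g_{N}\in L^{1}(\mu)$ and $\mathcal{F}_{n}\uparrow\mathcal{B}$, one gets $\mathbb{E}_{\mu}[g_{N}\mid\mathcal{F}_{n}]\to g_{N}$ $\mu$-a.e.\ (and in $L^{1}$), which is exactly the claimed limit, off a $\mu$-null set $X_{N}'$. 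Setting $X'=\bigcap_{N\in\mathbb{Z}}X_{N}'$, intersected with the full-measure set on which all $\mu([x]_{-n}^{n})$ are positive, then furnishes a single full-measure set valid for every $N\in\mathbb{Z}$.

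I expect no serious obstacle: once the density identity is in place the argument is a routine application of the martingale convergence theorem. The only points that need a little care are getting the shifted indices of $\pi$ and $P$ exactly right in the cylinder computation (checking against the $N=1$ formula for $Z_{n}$ recorded just above), and collecting the countably many exceptional null sets into one $X'$ that works simultaneously for all $N$.
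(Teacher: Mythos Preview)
Your proposal is correct and follows precisely the approach the paper sketches in the paragraph immediately preceding the lemma: identify the displayed product as the finite-level density $Z_{n}^{(N)}=\frac{d(\mu\circ T^{N})_{n}}{d\mu_{n}}$ with respect to the filtration $\mathcal{F}_{n}\uparrow\mathcal{B}$, observe that this is a martingale (equivalently, $\mathbb{E}_{\mu}[g_{N}\mid\mathcal{F}_{n}]$), and conclude via martingale/L\'evy convergence using that $\mu\circ T^{N}\sim\mu$ forces the limit to equal $g_{N}$ a.e. The paper does not spell out a proof beyond this outline and a reference to \cite{Shiryaev_prob}, so your write-up in fact supplies the details the paper omits; the cylinder computation and the bookkeeping of the null sets over $N\in\mathbb{Z}$ are handled correctly.
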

\begin{prop}
\label{prop: derivative condition}Under the assumptions of Theorem
\ref{thm: TMS}, there exists $L(x,y):\mathcal{T}\to(0,\infty)$ such
that for all $(x,y)\in\left(X'\times X'\right)\cap\mathcal{T}$, 
\[
L(x,y)^{-1}\frac{d\mu\circ T^{N}}{d\mu}\left(y\right)\leq\frac{d\mu\circ T^{N}}{d\mu}\left(x\right)\leq L(x,y)\frac{d\mu\circ T^{N}}{d\mu}\left(y\right).
\]
\end{prop}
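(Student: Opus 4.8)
The plan is first to record a uniform two‑sided bound on transition probabilities, which is where hypothesis~(\ref{eq: comm}) is really used. Since $\mu$ is fully supported on $\Sigma_{A}$, for every $n\in\mathbb{Z}$ and $s\in S$ the nonzero entries of the $s$‑th row of $P_{n}$ are precisely $\left\{P_{n}(s,t):A(s,t)=1\right\}$; there are at most $|S|$ of them, they are positive and they sum to $1$, so the largest of them is at least $1/|S|$, and (\ref{eq: comm}) then forces \emph{every} nonzero entry of \emph{every} $P_{n}$ to lie in $\left[1/(L|S|),\,1\right]$. Writing $c=1/(L|S|)$, we obtain $c\le P_{n}(s,t)\le 1$ for all $n\in\mathbb{Z}$ and all $(s,t)\in{\rm supp}(A)$, a bound uniform in the time parameter. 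This is the key step: it converts a comparison within a single row into an absolute estimate.

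Next I would fix $(x,y)\in\left(X'\times X'\right)\cap\mathcal{T}$ and let $M=M(x,y)$ be the least nonnegative integer with $x_{k}=y_{k}$ for all $|k|>M$; this is a measurable function on $\mathcal{T}$. Using the formula for $\frac{d\left(\mu\circ T^{N}\right)}{d\mu}$ from the preceding lemma and dividing the expression for $x$ by the one for $y$, one sees that as soon as $n>M+1$ the prefactors $\pi_{-n-N}(x_{-n})/\pi_{-n}(x_{-n})$ and $\pi_{-n-N}(y_{-n})/\pi_{-n}(y_{-n})$ are equal (because $x_{-n}=y_{-n}$), while the factor indexed by $j$ is the same for $x$ and $y$ whenever $j\notin[-M-1,M]$ (since then $x_{j}=y_{j}$ and $x_{j+1}=y_{j+1}$). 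Hence the quotient is eventually constant in $n$, and
\[
\frac{\frac{d\left(\mu\circ T^{N}\right)}{d\mu}(x)}{\frac{d\left(\mu\circ T^{N}\right)}{d\mu}(y)}=\prod_{j=-M-1}^{M}\frac{P_{j-N}\left(x_{j},x_{j+1}\right)\,P_{j}\left(y_{j},y_{j+1}\right)}{P_{j}\left(x_{j},x_{j+1}\right)\,P_{j-N}\left(y_{j},y_{j+1}\right)},
\]
a finite product of $2M+2$ factors whose value does not depend on $n$.

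Finally I would bound this product with the estimate from the first paragraph: each numerator is $\le 1$ and each denominator is $\ge c$, so every factor lies in $\left[c^{2},c^{-2}\right]$, and the whole product lies in $\left[c^{4M+4},c^{-4M-4}\right]$. Taking $L(x,y)=c^{-4M(x,y)-4}=(L|S|)^{\,4M(x,y)+4}$, which is a measurable function $\mathcal{T}\to(0,\infty)$ and, crucially, independent of $N$, yields the asserted inequalities for every $N\in\mathbb{Z}$. Since $\mu(X')=1$ we have $(\mu\times\mu)\left(\mathcal{T}\setminus(X'\times X')\right)=0$, so this $L$ has exactly the form needed to invoke Theorem~\ref{thm: nonsingular erg}.

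The only delicate point is the first paragraph; after the uniform bound on the $P_{n}(s,t)$ is in hand, the computation is the Markov analogue of Lemma~\ref{claim: double tail equiv} and is routine bookkeeping, the one thing to keep an eye on being that the $\pi$‑prefactors cancel for $n$ large rather than being bounded on their own.
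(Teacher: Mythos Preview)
Your argument is correct and follows essentially the same route as the paper: derive a uniform lower bound on the nonzero transition probabilities from~(\ref{eq: comm}), then cancel all but finitely many factors in the ratio of the two Radon--Nikodym derivatives and bound the remaining finite product using that uniform lower bound. Your constant $c=1/(L|S|)$ is in fact sharper than the paper's $L^{-|S|}$ (the hypothesis already bounds the max-to-min ratio within a row by $L$, so the chaining argument the paper uses is unnecessary), and your index range $[-M-1,M]$ is a touch more careful than the paper's, but these are cosmetic differences only.
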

\begin{proof}
For two $S\times S$ matrices $A,B$, we write $A\leq B$ if for all
$(s,t)\in S\times S$, 
\[
A(s,t)\leq B(s,t).
\]
Firstly, it follows from (\ref{eq: comm}) that if $P_{n}\left(s,t\right)>0$,
then 
\begin{equation}
P_{n}\left(s,t\right)\geq\frac{1}{L^{|S|}}.\label{eq: yeeess}
\end{equation}
Indeed, there are at most $|S|$ elements $t\in S$ such that $P_{n}(s,t)>0$.
Organizing them as an increasing sequence, we get that for all $s\in S$,
\[
\frac{\max\left(P_{n}(s,t):\ t\in S\right)}{\min\left(P_{n}(s,t):\ t\in S,P_{n}(s,t)>0\right)}\leq L^{|S|}.
\]
This implies (\ref{eq: yeeess}). Secondly, as $\mu$ is fully supported,
this implies that for all $m\leq n$, 
\begin{equation}
P^{(m,n)}:=P_{m}P_{m+1}\cdots P_{n}\geq L^{-|S|(n-m+1)}A^{n-m+1}.\label{eq: important}
\end{equation}
Thirdly, as $\mu$ is fully supported and $P_{n}$ are stochastic
matrices, it follows that for all $n\in\mathbb{Z}$ and $s,t\in S$,
\[
P_{n}(s,t)\leq A(s,t)
\]
and thus for all $m\leq n$,
\begin{equation}
P^{(m,n)}\leq A^{n-m+1}.\label{eq: slightly less}
\end{equation}
Finally, let $(x,y)\in\left(X'\times X'\right)\cap\mathcal{T}$, $N\in\mathbb{Z}$
and $n(x,y)\in\mathbb{N}$ such that for all $K\in\mathbb{Z}$ with
$|K|>n(x,y)$, 
\[
x_{K}=y_{K}.
\]
Then, for all $K$ such that $K-N>n(x,y)$, 
\[
\frac{\pi_{-K-N}\left(x_{-K}\right)}{\pi_{-K}\left(x_{-K}\right)}\cdot\prod_{j=-K}^{K-1}\frac{P_{j-N}\left(x_{j},x_{j+1}\right)}{P_{j}\left(x_{j},x_{j+1}\right)}=\left(\frac{\pi_{-K-N}\left(y_{-K}\right)}{\pi_{-K}\left(y_{-K}\right)}\cdot\prod_{j=-K}^{K-1}\frac{P_{j-N}\left(y_{j},y_{j+1}\right)}{P_{j}\left(y_{j},y_{j+1}\right)}\right)\cdot I(x,y),
\]
where 
\[
I(x,y)=\prod_{j=-n(x,y)}^{n(x,y)}\left(\frac{P_{j-N}\left(x_{j},x_{j+1}\right)}{P_{j-N}\left(y_{j},y_{j+1}\right)}\frac{P_{j}\left(y_{j},y_{j+1}\right)}{P_{j}\left(x_{j},x_{j+1}\right)}\right).
\]
As all elements in the product in $I(x,y)$ are strictly positive
(since $x,y\in\Sigma_{A}$), it follows from (\ref{eq: important})
and (\ref{eq: slightly less}) that 
\[
I(x,y)\leq L^{4|S|n(x,y)}=:L(x,y).
\]
We have shown that
\[
\frac{\pi_{-K-N}\left(x_{-K}\right)}{\pi_{-K}\left(x_{-K}\right)}\cdot\prod_{j=-K}^{K-1}\frac{P_{j-N}\left(x_{j},x_{j+1}\right)}{P_{j}\left(x_{j},x_{j+1}\right)}=\left(\frac{\pi_{-K-N}\left(y_{-K}\right)}{\pi_{-K}\left(y_{-K}\right)}\cdot\prod_{j=-K}^{K-1}\frac{P_{j-N}\left(y_{j},y_{j+1}\right)}{P_{j}\left(y_{j},y_{j+1}\right)}\right)\cdot L(x,y).
\]
Taking the limit as $K\to\infty$, we see that 
\[
\frac{d\left(\mu\circ T^{N}\right)}{d\mu}(x)\leq L(x,y)\frac{d\left(\mu\circ T^{N}\right)}{d\mu}(y).
\]
The proof is complete as the roles of $x$ and $y$ are symmetric
(thus the lower bound). 
\end{proof}
In order to prove Theorem \ref{thm: TMS} it remains to show that
the double tail $\mathcal{T}$ is trivial under the assumptions of
the Theorem. This is the following proposition. 
\begin{thm}
\label{thm: double tail TMS}Under the assumptions of Theorem \ref{thm: TMS}
the double tail relation $\mathcal{T}\subset\Sigma_{A}\times\Sigma_{A}$
is trivial. 
\end{thm}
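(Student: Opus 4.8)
The plan is to follow the surgery proof of Lemma~\ref{prop: main}: as there, it suffices to show that $\mu(B)>0$ implies $\mu(\mathcal{T}(B))=1$. Put $E=\mathcal{T}(B)$; transitivity of $\mathcal{T}$ gives $\mathcal{T}(E)=E$, so $E$ is genuinely $\mathcal{T}$-saturated, and $\mu(E)\ge\mu(B)>0$. Suppose for contradiction that $\mu(E)<1$. Since $\mathcal{F}_n\uparrow\mathcal{B}_{\Sigma_A}$, the martingale $\mathbb{E}[\mathbf{1}_E\mid\mathcal{F}_n]$ converges a.s. to $\mathbf{1}_E$; because $0<\mu(E)<1$, for any prescribed $\epsilon>0$ this produces an integer $r$ together with two symmetric cylinders $C_1=[c^{1}]_{-r}^{\,r}$ and $C_2=[c^{2}]_{-r}^{\,r}$ satisfying $\mu(E\mid C_1)>1-\epsilon$ and $\mu(E\mid C_2)<\epsilon$.

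For the quantitative input, fix $M$ with $A^{M}$ strictly positive (hence $A^{k}>0$ for all $k\ge M$). As in~\eqref{eq: yeeess}, \eqref{eq: comm} and full support give $P_n(s,t)\ge L^{-|S|}$ whenever $P_n(s,t)>0$, and combining this with $\pi_jP_j=\pi_{j-1}$ and $A^{M}>0$ (cf.~\eqref{eq: important}) produces a uniform lower bound
\[
\pi_j(s)=\sum_{a\in S}\pi_{j+M}(a)\,\bigl(P_{j+M}P_{j+M-1}\cdots P_{j+1}\bigr)(a,s)\ \ge\ L^{-|S|M}=:\delta_0>0
\]
on the one-dimensional marginals; consequently every $m$-step forward transition probability, and every reversed one $\mathbb{P}(x_{k-m}=s\mid x_k=t)=\pi_{k-m}(s)\,(P_{k-m}\cdots P_{k-1})(s,t)/\pi_k(t)$, with $m\ge M$, lies in $[\delta_0L^{-|S|m},\delta_0^{-1}]$, uniformly in $k$ and in $s,t\in S$. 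From this I extract the crucial $r$-independent comparison: writing $W=[-r-M,r+M]$ and $x|_{W^{c}}$ for the restriction to $\{|k|>r+M\}$, the Markov property shows that, conditionally on a central block $[c^{i}]_{-r}^{\,r}$, the law of $x|_{W^{c}}$ is a product of a left and a right factor, the left one first drawing $x_{-r-M-1}$ from the $(M{+}1)$-step reversed transition out of $c^{i}_{-r}$ and then continuing with a law that does not see $c^{i}$ (symmetrically on the right, via $c^{i}_{r}$). Since those $(M{+}1)$-step transition vectors have all entries in $[\delta_0L^{-|S|(M+1)},\delta_0^{-1}]$, replacing $c^{1}_{\pm r}$ by $c^{2}_{\pm r}$ distorts them only by a bounded factor $K=K(|S|,M,L)$ independent of $r$, so the conditional laws of $x|_{W^{c}}$ given $C_1$ and given $C_2$ are mutually absolutely continuous with Radon--Nikodym derivative in $[K^{-1},K]$.

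Now the surgery. Let $\mathrm{Ext}=\{x|_{W^{c}}:x\in C_1\cap E\}$, an analytic (hence $\mu$-measurable) set, and put
\[
G=\bigl\{\,y\in\Sigma_A:\ y|_{[-r,r]}=c^{2},\ \ y|_{W^{c}}\in\mathrm{Ext}\,\bigr\}.
\]
If $y\in G$, choose $x\in C_1\cap E$ with $x|_{W^{c}}=y|_{W^{c}}$; then $x$ and $y$ agree outside the finite window $W$, so $(x,y)\in\mathcal{T}$, and $x\in E=\mathcal{T}(E)$ forces $y\in E$. Hence $G\subset E$, and since $G\subset C_2$ we get $\mu(G)\le\mu(C_2\cap E)=\mu(E\mid C_2)\,\mu(C_2)<\epsilon\,\mu(C_2)$. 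On the other hand, the bridge coordinates of $W\setminus[-r,r]$ being unconstrained (and a valid bridge existing by $A^{M+1}>0$), $\mu(G)/\mu(C_2)$ equals the $\mu(\cdot\mid C_2)$-probability of the event $\{x|_{W^{c}}\in\mathrm{Ext}\}$, which by the comparison above is at least $K^{-1}$ times its $\mu(\cdot\mid C_1)$-probability; and the latter is $\ge\mu(C_1\cap E\mid C_1)=\mu(E\mid C_1)>1-\epsilon$. Thus $K^{-1}(1-\epsilon)\le\mu(G)/\mu(C_2)<\epsilon$. Choosing $\epsilon<(K+1)^{-1}$ at the outset (legitimate, as $K$ does not depend on $r$) is a contradiction; hence $\mu(E)=1$ and $\mathcal{T}$ is trivial.

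The step I expect to be the main obstacle is the $r$-independent Radon--Nikodym comparison of the two conditional exterior laws. In the Bernoulli case of Lemma~\ref{prop: main} the conditional law of the exterior ignores the central block, and one density point suffices; the Markov dependence here forces the use of two density points (for $E$ and for $E^{c}$) and the observation that conditioning on a different central word perturbs the far exterior only across the length-$(M{+}1)$ bridge, which is precisely where \eqref{eq: comm} and the uniform marginal bound keep the perturbation independent of the window size $r$. A naive single-density-point argument breaks down because $\mathbb{P}\bigl(C\mid\sigma(x_k:k\notin W)\bigr)$ really does depend on the boundary symbols of $C$ by an amount that grows with the width of $C$. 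The remaining ingredients --- the density-point reduction via the martingale $\mathbb{E}[\mathbf 1_E\mid\mathcal{F}_n]$, the existence of bridges from topological mixing, and the measurability of $\mathrm{Ext}$ --- are routine.
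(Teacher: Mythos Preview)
Your proof is correct and shares the paper's overall strategy: find density cylinders for the saturated set and its complement, then exploit that the central word influences the far exterior only through its boundary symbols via an $(M{+}1)$-step transition, which is uniformly comparable by \eqref{eq: comm} independently of the window size. The implementations diverge at the comparison step. The paper is more constructive: it extends the two cylinders $B,C$ to sub-cylinders $B'\subset B$, $C'\subset C$ of width $2(n+N)$ whose \emph{outer} boundary symbols are forced to coincide at a common state $s$ (chosen with $\pi_{-n-N}(s)\ge|S|^{-1}$); the obvious holonomy $R:B'\to C'$ then has \emph{constant} Jacobian $\mu(C')/\mu(B')$, and the single inequality \eqref{eq: the real deal} finishes. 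You instead bound the Radon--Nikodym derivative between the two conditional exterior laws by a uniform factor $K$ and transport mass through the projection $\mathrm{Ext}$. Your route is more probabilistic and avoids the explicit sub-cylinder construction; the paper's avoids any appeal to analytic-set measurability and keeps the Jacobian constant rather than merely bounded. Both are driven by the same $r$-independent estimate coming from primitivity of $A$ together with \eqref{eq: comm}, and your identification of that estimate as the crux is exactly right.
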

\begin{proof}
Let $N\in\mathbb{N}$ such that $A^{N}>0$. Since for all $n\in\mathbb{Z}$,
if $P_{n}(s,t)>0$ then 
\[
P_{n}(s,t)>L^{-|S|}
\]
we see that for any path $s=s_{0},s_{1},...,s_{N}=t$ such that 
\[
\prod_{i=0}^{N-1}P_{n+i}\left(s_{i},s_{i+1}\right)>0
\]
we have 
\[
\prod_{i=0}^{N-1}P_{n+i}\left(s_{i},s_{i+1}\right)\geq L^{-|S|N}.
\]

As $A^{N}>0$ and $\mu$ is fully supported on $\Sigma_{A}$ it follows
that for all $s,t\in S$ there exists a path $s=s_{0},s_{1},...,s_{N}=t$
such that
\[
\prod_{i=0}^{N-1}P_{n+i}\left(s_{i},s_{i+1}\right)\geq L^{-|S|N}.
\]
 Let $B=[b]_{-n}^{n}$ and $C=[c]_{-n}^{n}$ be arbitrary symmetric
$n$-cylinders of positive $\mu$ measure. We claim that 
\begin{equation}
\mu\left(\mathcal{T}\left(B\right)\cap C\right)\geq|S|^{-1}L^{-2|S|N}\mu(C).\label{eq: muyimportante}
\end{equation}
In order to prove this, take $s\in S$ such that 
\[
\pi_{-n-N}\left(s\right)\geq|S|^{-1}.
\]
Such states exist as $\pi_{n},\pi_{-n}$ are probability distributions
on $S$. By the first part, there exist a path $s_{-n-N}=s,s_{-n-(N-1)},...,s_{-n}=b_{-n}$
such that 
\[
\prod_{i=-n+N}^{-n-1}P_{i}\left(s_{i},s_{i+1}\right)>L^{-|S|N}
\]
Similarly, there exists a path $s_{n}=b_{n},...,s_{n+N}=s$ such that
\[
\prod_{i=n}^{n+N-1}P_{i}\left(s_{i},s_{i+1}\right)>L^{-|S|N}.
\]
Defining a symmetric $n+N$ cylinder $B'=[b']_{-n+N}^{n+N}$ via 
\[
b'=\begin{cases}
b_{i} & i\in[-n,n]\\
s_{i}, & i\in[-n-N,n+N]\backslash[-n,n],
\end{cases}
\]
it follows that $B'\subset B$ and 
\begin{align*}
\frac{\mu\left(B'\right)}{\mu(B)} & =\frac{\pi_{-n-N}\left(s\right)}{\pi_{-n}\left(b_{-n}\right)}\left(\prod_{i=-n+N}^{-n-1}P_{i}\left(s_{i},s_{i+1}\right)\right)\left(\prod_{i=n}^{n+N-1}P_{i}\left(s_{i},s_{i+1}\right)\right)\\
 & \geq|S|^{-1}L^{-|S|N}.
\end{align*}
An identical argument constructs a symmetric $n+N$ cylinder $C'=[c']_{-n+N}^{n+N}$
with $C'\subset C$, 
\[
c_{-n-N},\ c_{n+N}=s
\]
and 
\[
\frac{\mu\left(C'\right)}{\mu(C)}\geq|S|^{-1}L^{-|S|N}.
\]
For every $x\in B'$, define $R(x)\in C'$ by 
\[
R(x)_{i}=\begin{cases}
x_{i}, & i\notin[-n-N,n+N]\\
c_{i}, & i\in[-n-N,n+N].
\end{cases}
\]
The map $R:B'\to C'$ is bijective and for all $x\in B'$, $\left(x,R(x)\right)\in\mathcal{T}$.
Thus 
\[
\mu\left(\mathcal{T}\left(B\right)\cap C\right)\geq\mu\left(C'\right)\geq|S|^{-1}L^{-|S|N}\mu(C),
\]
proving (\ref{eq: muyimportante}). Another feature of $R$ is that
for all $x\in B'$, 
\[
\frac{d\mu\circ R}{d\mu}(x)=\frac{\mu\left(C'\right)}{\mu\left(B'\right)}.
\]
As a consequence, if $A\subset B'$, then writing $\epsilon=|S|^{-1}L^{-|S|N}$,
\begin{align}
\mu\left(\mathcal{T}\left(A\right)\cap C\right) & \geq\mu\left(R(A)\right)\nonumber \\
 & =\frac{\mu\left(A\right)}{\mu\left(B'\right)}\mu\left(C'\right)\geq\epsilon\frac{\mu\left(A\right)}{\mu\left(B\right)}\mu\left(C\right).\label{eq: the real deal}
\end{align}

Now let $D\in\mathcal{B}_{\Sigma_{A}}$ be a tail invariant set. If
$\mu\left(D\right)>0$ and $\mu\left(\Sigma_{A}\backslash D\right)>0$,
then there exists $n\in\mathbb{N}$ and two cylinder sets $B=[b]_{-n}^{n},C=[c]_{-n}^{n}$
such that 
\[
\mu\left(D\cap B\right)\geq\left(1-\frac{\epsilon}{2}\right)\mu(B)\ \ \text{and }\mu\left(D\cap C\right)<\frac{\epsilon^{2}}{4}\mu\left(C\right).
\]
As 
\[
\mu\left(B'\right)\geq\epsilon\mu\left(B\right),
\]
it then follows that 
\[
\mu\left(D\cap B'\right)\geq\frac{\epsilon}{2}\mu\left(B\right).
\]
Consequently, by (\ref{eq: the real deal}), 
\begin{align*}
\mu\left(D\cap C\right) & =\mu\left(\mathcal{T}\left(D\right)\cap C\right)\\
 & \geq\mu\left(\mathcal{T}\left(D\cap B'\right)\cap C\right)\\
 & \geq\epsilon\frac{\mu\left(D\cap B'\right)}{\mu\left(B\right)}\mu\left(C\right)\geq\frac{\epsilon^{2}}{2}\mu\left(C\right).
\end{align*}
This is a contradiction, hence for every $D\in\mathcal{B}_{\Sigma_{A}}$
which is $\mathcal{T}$-invariant either $\mu\left(D\right)=0$ or
$\mu\left(\Sigma_{A}\backslash D\right)=0$. 
\end{proof}
\begin{proof}[Proof of Theorem \ref{thm: TMS}]

By Proposition \ref{prop: derivative condition} and Theorem \ref{thm: double tail TMS},
the shift $\left(\Sigma_{A},\mathcal{B}_{\Sigma_{A}},\mu,T\right)$
satisfies the conditions of Theorem \ref{thm: nonsingular erg} hence
if it is conservative, then it is ergodic. 

\end{proof}

\subsubsection{Relation to a closed relative of question 97 from Rufus Bowen's notebook}

Rufus Bowen has asked the following question.
\begin{problem}
Is there a nonergodic volume preserving $C^{1}$ Anosov diffeomorphism
of $\mathbb{T}^{2}?$
\end{problem}
The following variant of this problem is still open. 
\begin{problem}
Is there a nonergodic, conservative $C^{1}$ Anosov diffeomorphism
of $\mathbb{T}^{2}?$
\end{problem}
Note that if the diffeomorphism is $C^{1+\alpha}$ for $\alpha>0$
then by \cite{GurOse73} being conservative is equivalent to having
an absolutely continuous invariant measure, while by \cite{Kos16Anosov}
this is no longer true in the $C^{1}$ category. A natural approach
for this problem is to start with an hyperbolic diffeomorphism of
$\mathbb{T}^{2}$ with a nice Markov partition which gives a topological
semiconjugacy $\Theta:\left(\Sigma_{A},T\right)\to\left(\mathbb{T}^{2},f\right)$.
The push-forward by $\Theta$ of the class of inhomogeneous Markov
shifts on the symbolic space is a natural class of nonsingular measures
for $f$. In \cite{Kos16Anosov}, the examples were constructed by
a smooth realization process of such a Markov measure. These measures
are nice for that realization scheme as the Markov property enables
one to build the realization by an iterated scheme. Theorem \ref{thm: double tail TMS}
shows that this class of inhomogeneous Markov measures under a natural
condition are either dissipative or ergodic. 

\subsection{Bernoulli shifts on groups with the ratio ergodic theorem property}

A countable group $G$ satisfies the Ratio-Ergodic-Theorem (RET) property
if there exists an increasing sequence of finite subsets $F_{n}\subset G$,
$\bigcup_{n}F_{n}=G$ such that for any non-singular, conservative
$G$-action $G\curvearrowright\left(X,\mathcal{B},\mu\right)$ with
$\mu\left(X\right)=1$, for all $f\in L^{1}\left(X,\mathcal{B},\mu\right)$
and for $\mu$ almost every $x\in X$, 
\[
R_{n}\left(f,1\right)(x):=\frac{\sum_{g\in F_{n}}\frac{d\mu\circ T_{g}}{d\mu}(x)f\circ T_{g}(x)}{\sum_{g\in F_{n}}\frac{d\mu\circ T_{g}}{d\mu}(x)}\xrightarrow[n\to\infty]{}h(f,1)(x)
\]
where $h=h\left(f,1\right)$ satisfies 
\begin{itemize}
\item If $f\geq0$, then $h\geq0$. 
\item For all $g\in G$, $h\circ T_{g}=h$.
\item $\int_{X}hkd\mu=\int_{X}k1_{A}d\mu$ for all $k\in L^{\infty}\left(X,\mu\right)$
satisfying for all $g\in G$, $k\circ T_{g}=k$.
\end{itemize}
We say that $G$ satisfies the Hurewicz Maximal inequality property,
henceforth abbreviated as $G$ is a Hurewicz group, if in addition
there exists $C>0$ such that for all $f\in L^{1}\left(X,\mathcal{B},\mu\right)$
and $\epsilon>0$,
\[
\mu\left(\sup_{n\in\mathbb{N}}\left|R_{n}(f,1)\right|>\epsilon\right)\leq C\frac{|f|_{1}}{\epsilon}.
\]
 Examples of Hurewicz RET groups are $\mathbb{Z}$ (Hurewicz's theorem),
$\mathbb{Z}^{d}$ \cite{Hoc1} with $F_{n}=[-n,n]^{d}$ and discrete
Heisenberg groups $H^{d}\left(\mathbb{Z}\right)$ \cite{Jarret17}.
Hochman has shown a connection between the Hurewicz property for amenable
groups and existence of Følner sequences which satisfy the Besicovitch
covering property. See \cite{Hoc2} for these definitions and precise
statements. 

Given a countable group $G$ and $N\in\mathbb{N}$, the Bernoulli
action of $G$ on $\{1,..,N\}^{G}$ is defined by 
\[
\left(T_{g}(x)\right)_{h}=x_{g^{-1}h}.
\]
By Kakutani's dichotomy \cite[P. 528, Thm 3]{Shiryaev_prob}, $\mu\circ T_{g}$
and $\mu$ are equivalent if and only if 
\begin{equation}
\sum_{h\in G}\sum_{j=1}^{N}\left(\sqrt{\mu_{h}\left(j\right)}-\sqrt{\mu_{g^{-1}h}\left(j\right)}\right)^{2}<\infty.\label{eq: Kakutani-1}
\end{equation}
Therefore, the shift is nonsingular if and only if for all $g\in G$,
equation (\ref{eq: Kakutani-1}) holds. 
\begin{thm}
\label{thm: Bernoulli groups}Let $G$ be a countable Hurewicz RET
group. If a non-singular Bernoulli shift\\
 $\left(\left\{ 1,..,N\right\} ^{G},\mathcal{B},\prod_{g\in G}\mu_{g},\left(T_{g}\right)_{g\in G}\right)$
is conservative and 
\begin{equation}
L=\sup_{g\in G}\frac{\max_{j\in\{1,...,N\}}\left(\mu_{g}\left(\{j\}\right)\right)}{\min_{j\in\{1,...,N\}}\left(\mu_{g}\left(\{j\}\right)\right)}<\infty,\label{eq: condition on B-shifts-1}
\end{equation}
then it is ergodic.
\end{thm}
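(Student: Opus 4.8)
The plan is to carry the proof of Theorem~\ref{thm: ergodicity of Bernoulli shifts} over to the group setting essentially verbatim; the one genuinely new ingredient is already built into the hypothesis, since a Hurewicz RET group supplies precisely the ratio ergodic theorem and maximal inequality that powered the one-dimensional argument (they play the roles of the Hurewicz ergodic theorem and the maximal inequality recalled just before Theorem~\ref{thm: nonsingular erg}). Throughout, write $\mathcal{T}=\{(x,y)\in(\{1,\dots,N\}^{G})^{2}:\ x_{h}=y_{h}\ \text{for all but finitely many}\ h\in G\}$ for the double-tail relation and, for $(x,y)\in\mathcal{T}$, put $\Delta(x,y)=\{h\in G:\ x_{h}\neq y_{h}\}$, a finite set. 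Condition (\ref{eq: condition on B-shifts-1}) with $L<\infty$ forces $0<\min_{j}\mu_{g}(\{j\})$ and $\max_{j}\mu_{g}(\{j\})<1$ for every $g$, so the elementary reductions of the $\mathbb{Z}$-case go through, and by Kakutani's theorem there is a conull set $X'$ on which $\frac{d(\mu\circ T_{g})}{d\mu}(x)=\prod_{h\in G}\frac{\mu_{g^{-1}h}(x_{h})}{\mu_{h}(x_{h})}$ for every $g\in G$.

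First I would establish the two Bernoulli-shift facts that feed into the criterion. For the cocycle comparison on $\mathcal{T}$ (the analogue of Lemma~\ref{claim: double tail equiv} and of hypothesis~(\ref{eq: compare})), observe that for $x,y\in X'$ with $(x,y)\in\mathcal{T}$ the quotient of the two cocycles telescopes over $\Delta(x,y)$:
\[
\frac{d(\mu\circ T_{g})/d\mu\,(x)}{d(\mu\circ T_{g})/d\mu\,(y)}=\prod_{h\in\Delta(x,y)}\frac{\mu_{g^{-1}h}(x_{h})}{\mu_{g^{-1}h}(y_{h})}\cdot\frac{\mu_{h}(y_{h})}{\mu_{h}(x_{h})},
\]
a product of $2|\Delta(x,y)|$ ratios of values of the $\mu_{k}$'s, each in $[L^{-1},L]$ by (\ref{eq: condition on B-shifts-1}); hence the comparison holds with $L(x,y):=L^{2|\Delta(x,y)|}$ \emph{simultaneously for every $g\in G$}. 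For the double-tail $0$--$1$ law (the analogue of Lemma~\ref{prop: main}), the proof of Lemma~\ref{prop: main} uses nothing about the order on $\mathbb{Z}$: given $B$ with $\mu(B)>0$ and $\epsilon>0$, approximate $B$ from within by a cylinder over a finite window $\Lambda\subset G$, flip the coordinates inside $\Lambda$ to see that $\mathcal{T}(B)$ meets each $\Lambda$-cylinder in at least $(1-\epsilon)$ of its mass, sum over the pairwise disjoint $\Lambda$-cylinders to obtain $\mu(\mathcal{T}(B))\ge1-\epsilon$, and let $\epsilon\to0$; so $\mathcal{T}$ is ergodic.

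Next I would run the group analogue of Theorem~\ref{thm: nonsingular erg}. Set $\hat{T}_{F_{n}}(f)=\sum_{g\in F_{n}}\frac{d(\mu\circ T_{g})}{d\mu}\,f\circ T_{g}$; since the action is conservative, $\hat{T}_{F_{n}}(\mathbf{1})=\sum_{g\in F_{n}}\frac{d(\mu\circ T_{g})}{d\mu}\to\infty$ $\mu$-a.e.\ (the conservative part of a nonsingular countable-group action is $\{x:\sum_{g\in G}\frac{d(\mu\circ T_{g})}{d\mu}(x)=\infty\}$ up to null sets, exactly as for $\mathbb{Z}$), using $F_{n}\uparrow G$. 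Fix a cylinder $A$ over a finite window $\Lambda$. If $(x,y)\in\mathcal{T}$ then $1_{A}\circ T_{g}(x)=1_{A}\circ T_{g}(y)$ for every $g$ outside the finite set $\Lambda\Delta(x,y)^{-1}$, so by the $g$-uniform comparison above, $\hat{T}_{F_{n}}(1_{A})(x)\to\infty$ forces $\hat{T}_{F_{n}}(1_{A})(y)\to\infty$, the discarded terms contributing at most the finite number $\sum_{g\in\Lambda\Delta(x,y)^{-1}}\frac{d(\mu\circ T_{g})}{d\mu}(y)$. Thus $\tilde{\mathbf{A}}:=\{x:\ \hat{T}_{F_{n}}(1_{A})(x)\to\infty\}$ is $\mathcal{T}$-invariant. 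The RET applied to $1_{A}$ gives $\hat{T}_{F_{n}}(1_{A})/\hat{T}_{F_{n}}(\mathbf{1})\to h(A):=\mathbb{E}(1_{A}\mid\mathcal{I})$ with $\int h(A)\,d\mu=\mu(A)>0$, so $\{h(A)>0\}$ has positive measure, and on it $\hat{T}_{F_{n}}(1_{A})=\bigl(\hat{T}_{F_{n}}(1_{A})/\hat{T}_{F_{n}}(\mathbf{1})\bigr)\hat{T}_{F_{n}}(\mathbf{1})\to\infty$; hence $\{h(A)>0\}\subset\tilde{\mathbf{A}}$, and by the $0$--$1$ law $\mu(\tilde{\mathbf{A}})=1$, so $\hat{T}_{F_{n}}(1_{A})\to\infty$ a.e.\ for every cylinder $A$. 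The passage from cylinders to an arbitrary $A$ with $\mu(A)>0$ is then word for word the Borel--Cantelli argument in the proof of Theorem~\ref{thm: ergodicity of Bernoulli shifts}: approximate $1_{B}$ in $L^{1}$ by $1_{A_{k}}$ with $\mu(A_{k}\triangle B)\le k^{-2}$, control $\hat{T}_{F_{n}}(1_{B}-1_{A_{k}})/\hat{T}_{F_{n}}(\mathbf{1})$ by the Hurewicz maximal inequality plus Borel--Cantelli, and transport divergence across $\mathcal{T}$ with the $g$-uniform comparison. Finally, $\hat{T}_{F_{n}}(1_{A})\to\infty$ $\mu$-a.e.\ for all $A$ of positive measure is equivalent to ergodicity by the countable-group version of \cite[Proposition~1.3.2]{Aar97InfErg}.

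The hard part, such as it is, is not any single estimate but checking that the $\mathbb{Z}$-specific steps survive: the one place that used the linear order of $\mathbb{Z}$ --- that translates of a fixed window eventually leave a bounded tail set --- must be replaced by the bare finiteness statement ``$g^{-1}\Lambda\cap\Delta(x,y)=\emptyset$ whenever $g\notin\Lambda\Delta(x,y)^{-1}$'', and for this to drive the argument one needs exactly the two features isolated above: that the comparison constant $L(x,y)$ is independent of $g$ (so it can be pulled outside a sum over $F_{n}$), and that the correction term $\sum_{g\in\Lambda\Delta(x,y)^{-1}}\frac{d(\mu\circ T_{g})}{d\mu}(y)$ is finite (being a finite sum of Radon--Nikodym derivatives). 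Both are automatic. What remains is the purely expository task of stating and proving the group analogue of Theorem~\ref{thm: nonsingular erg} --- conservative, double-tail-trivial, $g$-uniform comparison $\Rightarrow$ ergodic --- in the generality of a Hurewicz RET group, after which Theorem~\ref{thm: Bernoulli groups} follows from it via the two Bernoulli-shift facts exactly as Theorem~\ref{thm: ergodicity of Bernoulli shifts} follows from Theorem~\ref{thm: nonsingular erg}.
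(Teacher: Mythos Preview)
Your proposal is correct and follows essentially the same approach as the paper, which simply states that the proof is identical to that of Theorem~\ref{thm: ergodicity of Bernoulli shifts} once one replaces $\mathcal{T}$ by the $(F_{n})$-homoclinic relation $\mathcal{HOM}=\{(x,y):\exists n,\ x|_{G\setminus F_{n}}=y|_{G\setminus F_{n}}\}$ and uses the RET and the Hurewicz maximal inequality in place of their $\mathbb{Z}$ counterparts. Since $F_{n}\uparrow G$, your finite-difference relation $\mathcal{T}$ coincides with the paper's $\mathcal{HOM}$, so the two arguments are the same; your write-up simply supplies the details (the $g$-uniform cocycle comparison, the tail $0$--$1$ law, the finiteness of the exceptional set $\Lambda\Delta(x,y)^{-1}$, and the Borel--Cantelli passage from cylinders to general sets) that the paper leaves implicit.
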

The proof of this theorem is identical to the proof of Theorem \ref{thm: ergodicity of Bernoulli shifts}
once one replaces the relation $\mathcal{T}$ with the $\left(F_{n}\right)_{n}$
homoclinic relation
\[
\mathcal{HOM}=\left\{ (x,y)\in\{1,..,N\}^{G}:\ \exists n\in\mathbb{N},\ x|_{G\backslash F_{n}}=y|_{G\backslash F_{n}}\right\} .
\]
Here $F_{n}$ is the sequence from the definition of Hurewicz-RET
group. The use of the Ratio and Maximal ergodic theorems is similar. 
\begin{problem*}
Is Theorem \ref{thm: Bernoulli groups} true for a general countable
amenable group $G$?
\end{problem*}
This problem is interesting as there are currently very few groups
which are known to be RET and Hurewicz. Hochman has shown that there
are abelian groups (hence amenable) which are not RET. 

\bibliographystyle{plain}
\bibliography{biblioNS}

\end{document}